\numberwithin{equation}{section}
\theoremstyle{plain}
\newtheorem{theorem} {Theorem} [section]
\newtheorem{lemma} [theorem] {Lemma}
\newtheorem{proposition} [theorem] {Proposition}
\newtheorem{definition}[theorem]{Definition}
\newtheorem{remark}[theorem]{Remark}
\newtheorem*{notation} {Notation}
\newcommand \g {\mathfrak{g}}
\newcommand \ab {\operatorname{ab}}
\newcommand \gl {\mathfrak{gl}}
\newcommand \GL {\operatorname{GL}}
\renewcommand \O {\operatorname{O}}
\newcommand \F {\mathbb{F}}
\newcommand \z {\mathfrak{z}}
\newcommand \h {\mathfrak{h}}
\newcommand \C {\mathbb{C}}
\newcommand \N {\mathbb{N}}
\newcommand \Z {\mathbb{Z}}
\renewcommand \t {\mathfrak{t}}
\newcommand \LS {\mathcal{LS}_{(m|n)}}
\newcommand \NLS {\mathcal{N}_{(m|n)}}
\newcommand \rk {\operatorname{rank}}
\newcommand \diag {\operatorname{diag}}
\newcommand \End {\operatorname{End}}
\newcommand \Aut {\operatorname{Aut}}
\newcommand \Mat {\operatorname{Mat}}
\newcommand \Span {\operatorname{Span}}
\newcommand \Sym {\operatorname{Sym}}
\newcommand \I {\operatorname{I}}
\newcommand \id {\operatorname{id}}
\def\[{[\![}
\def\]{]\!]}
\begin{document}

\title{Varieties of Nilpotent Lie Superalgebras of dimension $\leq 5$}

\author{Mar\'ia Alejandra Alvarez}
\address{Departamento de Matem\'aticas - Facultad de Ciencias B\'asicas - Universidad de Antofagasta - Chile}
\email{maria.alvarez@uantof.cl}

\author{Isabel Hern\'andez}
\address{CONACYT - CIMAT - Unidad M\'erida}
\email{ isabel@cimat.mx}

\maketitle

\begin{abstract}
In this paper we study the varieties of nilpotent Lie superalgebras of dimension $\leq 5$. We provide the algebraic classification of these superalgebras and obtain the irreducible components in every variety. As a byproduct we construct rigid nilpotent Lie superalgebras of arbitrary dimension.
\end{abstract}
{\bf Keywords: }
Nilpotent Lie superalgebras, geometric classification, degenerations

{\bf MSC: }
17B30, 17B56, 17B99

\section{Introduction}
Nilpotent Lie superalgebras are an important class of Lie superalgebras and there are not many results concerning them in the literature. There are some papers concerning the classification of low-dimensional  nilpotent Lie superalgebras, in \cite{He} some nilpotent Lie superalgras are missing and in \cite{MF} the classification of some 
$(2|3)$-dimensional nilpotent Lie superalgebras includes three parametric families which are in fact not families (see remark \ref{cotejo2-3}).
On the other hand, some nilpotent Lie superalgebras of maximal nilindex were studied in \cite{GKN}, and in this process a rough classification of 
nilpotent Lie superalgebras of dimension (2|3) is given.

\medskip

The study of the geometric classification of algebras, their degenerations and their irreducible components is an active research field. Among all structures, we mention Lie algebras (see, for instance, \cite{A}, \cite{B1}, \cite{B2}, \cite{BS}, \cite{CD}, \cite{CPSW}, \cite{GO}, \cite{HNPT}, \cite{KN}, \cite{L}, \cite{NP}, \cite{S}, \cite{W}), Jordan algebras (see, for instance, \cite{ACGS}, \cite{AFM}, \cite{GKP}, \cite{KM}, \cite{KP}) and superstructures (see \cite{AZ}, \cite{AH}, \cite{FP} and \cite{AHK}).

\medskip

In this work, we provide both the algebraic and geometric classifications of nilpotent Lie superalgebras of dimension $\leq 5$.

\section{Preliminaries}
A  supervector space $V=V_0\oplus V_1$ over the field $\F$ is a $\Z_2$-graded  $\F$-vector space, i.e.,  a  vector space decomposed into a direct sum of two subspaces. The elements of
 $V_0\setminus\{ 0 \}$ (respectively,  on $ V_1 \setminus \{0\}$) are called even (respectively, odd). Even and odd elements together are called homogeneous; the degree of a homogeneous element is
 $i$, denoted by $|v|=i$,  if $v \in V_i \setminus \{ 0 \}$, for $i \in \Z_2$. If $\dim_\F (V_0)=m$ and $\dim_\F (V_1)=n$, we say that the dimension of $V$ is $(m|n)$.   The vector space $\End(V)$ can be viewed as a  supervector space,  denoted by  $\End(V_0|V_1)$,  where  $\End( V_0 | V_1)_i = \{  T  \in \End(V) \; | \; T(V_j)\subset V_{i+j},   \;  j \in \Z_2  \}$, for $i\in \Z_2$. Given a homogeneous basis  $\{e_1,  \dots, e_{m}, f_1, \dots, f_n\}$ for  $V=V_0 \oplus V_1$ (that is, $V_0 = \Span_\F\{ e_1, \dots,
 e_m\} $ and $V_1 = \Span_\F\{ f_1, \dots,
 f_n\}$), it follows that $\End(V_0 | V_1)_i$ can be identified with $(\Mat_{(m|n)}(\F))_i$, for $ i \in \Z_2$,  where
$$
\begin{array}{l}
(\Mat_{(m|n)}(\F))_0= \left\{
\begin{psmallmatrix}
A & 0 \\
0 & D
\end{psmallmatrix} |\, A \in \Mat_m(\F),  \; D \in \Mat_n(\F)   \right\}, \quad \text{ and }
 \\
 \\
(\Mat_{(m|n)}(\F))_1= \left\{
\begin{psmallmatrix}
0 & B \\
C & 0
\end{psmallmatrix} |\, C \in \Mat_{n \times m}(\F), \; B \in \Mat_{m\times n}(\F) \right\}.
 \end{array}
$$

In particular,  $\Aut(V_0|V_1)$ is a $\Z_2$-graded group such that $\Aut(V_0|V_1)_0$  can be identified  with   $\GL_m(\F) \oplus \GL_n(\F)$.

\medskip
%\subsection{Lie Superalgebras}
A {\bf Lie superalgebra} is a supervector space $\g=\g_0 \oplus \g_1$ endowed with a
bilinear map  $\[ \cdot,  \cdot \] : \g \times  \g \to \g$   satisfying the following:
\begin{enumerate}[(i)]
\item $\[\g_i, \g_j\] \subset \g_{i+j}$, for $i, j \in \Z_2$.
\item Super skew-symmetry:   $\[x,y\]= -(-1)^{|x||y|}\[y, x\]$.
\item Super Jacobi identity:
 $$(-1)^{|x||z|}\[ \[ x,y    \], z    \] +(-1)^{|x||y|}\[ \[ y,z    \], x    \] +   (-1)^{|y||z|}\[ \[ z,x    \], y    \] =0$$
\end{enumerate}
for  $x,y,z \in (\g_0 \cup \g_1) \setminus \{ 0 \}$.  A linear map  between Lie superalgebras $\Phi: \g \to \g^\prime$  is called a Lie superalgebra morphism if $\Phi$ is even (i.e.  $\Phi(\g _i) \subset    \g_i $,  for $ i \in \Z_2$), and
$\Phi (\[ x,y\] )= \[ \Phi(x),  \Phi(y) \] ^\prime$, for $x,y \in \g$. 
(see \cite{Sc} for standard terminology on Lie superalgebras).

\medskip

\subsection{The variety $\LS$}
Let  $V=V_0\oplus V_1$ be a  complex $(m|n)$-dimensional  supervector space with a fixed homogeneous basis
$\left\{e_1,\dots,e_m, f_1,\dots,f_n\right\}$.  Given a Lie superalgebra structure $\[ \cdot, \cdot \]$ on $V$,  we can identify  $ \g=(V , \,  \[ \cdot, \cdot \])$  with its set of structure cons\-tants 
$\left\{c_{ij}^k,\rho_{ij}^k,\Gamma_{ij}^k\right\}\in\C^{m^3+2mn^2}$, where
$$\[e_i,e_j\]=\sum_{k=1}^mc_{ij}^ke_k,\quad \[e_i,f_j\]=\sum_{k=1}^n\rho_{ij}^kf_k,\quad\text{and}\quad \[f_i,f_j\]=\sum_{k=1}^m\Gamma_{ij}^ke_k.$$
Since every set of structure constants must satisfy the polynomial equations given by  super skew-symmetry and the super Jacobi identity,
%{\color{blue}for every $i,j,k,r,m$, the polynomial equations:
%\begin{equation}
%\begin{aligned}
%c_{i,j}^k+c_{j,i}^k&=0\\
%\sum_{l=1}^m\left(c_{j,k}^lc_{i,l}^r+c_{k,i}^lc_{j,l}^r+c_{i,j}^lc_{k,l}^r\right)&=0\\
%\sum_{l=1}^m\left(\Gamma_{i,j}^l\rho_{l,k}^r+\Gamma_{j,k}^l\rho_{l,i}^r+\Gamma_{k,i}^l\rho_{l,k}^r\right)&=0\\
%\sum_{l=1}^n\left(\rho_{i,j}^l\Gamma_{l,k}^r+\rho_{i,k}^l\Gamma_{j,l}^r\right)&=0\\
%\sum_{l=1}^m\Gamma_{j,r}^lc_{i,l}^r&=0,\\
%\end{aligned}
%\end{equation}
%then} 
the set of all Lie superalgebras of dimension $(m|n)$ is an algebraic variety in $\C^{m^3+2nm^2}$, denoted by $\LS$.  
Since Lie superalgebra isomorphisms are  even maps, it follows that  the group $G=\Aut(\g_0|\g_1)_0\simeq\GL_m(\C) \oplus  \GL_n(\C)$ acts on $\LS$ by ``change of basis'': 
$$g\cdot\[x,y\]=g\[g^{-1}x,g^{-1}y\],\quad\text{for $g\in G$, and $x,y\in \g$}.$$
Observe that the set of $G$-orbits of this action is in one-to-one correspondence with the isomorphism classes in $\LS$.

\medskip

Given two Lie superalgebras $\g$ and $\h$, we say that $\g$ {\bf degene\-rates} to $\h$, denoted by $\g\rightarrow\h$, if $\h$ lies in the Zariski closure of the $G$-orbit $O(\g)$. The process of degeneration induces a partial order in the orbit space of $\LS$, given by $O(\h)\leq O(\g)$ if and only if $ \h\in\overline{O(\g)}$. %Moreover, this relation is transitive.

\medskip

Since each orbit $O(\g)$ is a constructible set, its closures relative to the Euclidean and the Zariski topologies are the same  (see \cite{M}, 1.10  Corollary 1, p. 84).  As a  consequence  the following  is obtained:

\begin{lemma}
Let $\C(t)$ be the field of fractions of the polynomial ring $\C[t]$. If there exists an operator $g_t\in\GL_m(\C(t))\oplus\GL_n(\C(t))$ such that $\displaystyle\lim_{t\rightarrow 0}g_t\cdot\g=\h$, then $\g\rightarrow\h$.
\label{def-equiv-defor}
\end{lemma}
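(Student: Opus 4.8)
The plan is to realize $\h$ as a Euclidean limit of points that all lie in the orbit $O(\g)$, and then to appeal to the observation made just before the statement — that $O(\g)$ is constructible, so its Euclidean and Zariski closures coincide — in order to conclude $\h\in\overline{O(\g)}$, which is precisely the assertion $\g\to\h$.

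First I would observe that the entries of $g_t$ are rational functions of $t$, and that the same holds for the entries of $g_t^{-1}$: over the field $\C(t)$ the inverse of an invertible matrix is computed by Cramer's rule, so indeed $g_t^{-1}\in\GL_m(\C(t))\oplus\GL_n(\C(t))$. Expanding $g_t\cdot\[x,y\]=g_t\[g_t^{-1}x,g_t^{-1}y\]$ in the fixed homogeneous basis $\{e_1,\dots,e_m,f_1,\dots,f_n\}$, one sees that each structure constant of $g_t\cdot\g$ is a polynomial in the entries of $g_t$, the entries of $g_t^{-1}$ and the structure constants of $\g$, hence a rational function of $t$. Thus $g_t\cdot\g$ corresponds to a vector $\varphi(t)\in\C(t)^{m^3+2mn^2}$ of rational functions of $t$, and the hypothesis $\lim_{t\to 0}g_t\cdot\g=\h$ says exactly that $\varphi$ extends continuously to $t=0$ with value (the structure constants of) $\h$, the limit being understood in the Euclidean topology of the finite-dimensional space $\C^{m^3+2mn^2}$.

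Next I would set $S\subset\C$ to be the finite set consisting of the zeros of the denominators occurring in the entries of $g_t$ and of $g_t^{-1}$ (equivalently, the poles of $\varphi$ together with the zeros of the determinants of the two diagonal blocks of $g_t$). For every $t_0\in\C\setminus S$ the operator $g_{t_0}$ is a genuine element of $G=\GL_m(\C)\oplus\GL_n(\C)$, whence $\varphi(t_0)=g_{t_0}\cdot\g\in O(\g)$. Since $S$ is finite, one can choose a sequence $t_k\to 0$ with $t_k\in\C\setminus S$; then $\varphi(t_k)\in O(\g)$ for all $k$ and $\varphi(t_k)\to\h$, so $\h$ lies in the Euclidean closure of $O(\g)$. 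By the remark preceding the statement (citing \cite{M}), this Euclidean closure equals the Zariski closure $\overline{O(\g)}$, so $\h\in\overline{O(\g)}$ and therefore $\g\to\h$.

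I expect no genuinely hard step here. The only point demanding a little care is the reduction of the hypothesis to a Euclidean-limit statement — i.e.\ checking that the structure constants of $g_t\cdot\g$ are rational functions of $t$, so that ``$\lim_{t\to 0}$'' makes sense as a limit in the affine space of structure constants — together with the observation that one may approach $0$ along parameters avoiding the finitely many values at which $g_t$ fails to define an invertible operator over $\C$.
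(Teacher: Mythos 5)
Your proof is correct and follows exactly the route the paper intends: the paper offers no separate argument for this lemma beyond the preceding remark that orbits are constructible so their Euclidean and Zariski closures agree, and your write-up simply supplies the (standard) details — rationality of the structure constants of $g_t\cdot\g$ in $t$, specialization at all but finitely many $t_0$ to genuine points of $O(\g)$, and passage to the Euclidean limit. Nothing is missing.
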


%Given $\g=\g_0\oplus\g_1 \in \LS$, recall the identification of  $\g$ with its set of structure cons\-tants, $\g=\left\{c_{ij}^k,\rho_{ij}^k,\Gamma_{ij}^k\right\}$.
 From the previous lemma,  it follows that  every Lie superalgebra $\g\in\LS$ degenerates to the Lie superalgebra $\mathfrak{a}=\{0,0,0\}$. In fact, take  $g_t=t^{-1}\left(\operatorname{id}_{m}\oplus \operatorname{id}_{n}\right)$, where $\operatorname{id}_{k}$ is the identity map in $\GL_{k}(\C)$;  then  $\displaystyle\lim_{t\rightarrow 0}g_t\cdot\g=\frak a$. Thus, $ \g \rightarrow  \mathfrak{a}$.

\begin{definition}\label{def:rigid}
An element $\g\in \LS$ is called {\bf rigid} if its orbit $O(\g)$ is open in $\LS$.
\end{definition}

Rigid elements of the variety are important due to the fact that if $\g$ is rigid in $\LS$, then there exists an irreducible component $C$ such that $C\cap O(\g)$ is a non-empty and open subset of $C$ and thus $C\subset\overline{O(\g)}$. 

\medskip

In order to prove the rigidity of Lie superalgebras, we will compute the cohomology group $(H^2(\g,\g))_0$ which parameterizes the infinitesimal deformations of the Lie superalgebra $\g$ (for details on deformations of Lie superalgebras see for instance \cite{B}).

\subsection{The variety $\NLS$} Let $\g=\g_0 \oplus \g_1$ be a Lie superalgebra. Define \begin{equation}
\g^0:=\g \quad \text{and} \quad \g^k:= \[ \g, \g^{k-1}\], \quad \text{for } k \geq 1. 
\label{nilpotentdefinition}
\end{equation} 
The superalgebra $\g$ is called {\bf nilpotent} if there exists $k \geq 1$ such that $\g^k=0$. 
We will denote by $\NLS$ the set of all  nilpotent Lie superalgebras of dimension $(m|n)$.  
Notice that $\NLS \subset \LS$ is an algebraic
subvariety, since 
the condition $\g^k=0$ for some $k\geq1$  
is determined by polynomial equations on the structure constants.

\subsection{Invariants}

In order to show the non-existence of degenerations, we need a series of invariants. First we recall some definitions:

%Next, we provide a list of invariants that will be used in what follows. 

\begin{definition}
Given a Lie superalgebra $\g=\g_0\oplus\g_1$, an $(\alpha,\beta,\gamma)$-derivation of degree $i$ of $\g$ is a linear map $D\in\End(\g_0|\g_1)_i$ such that
$$\alpha D(\[x,y\])=\beta\[D(x),y\]+(-1)^{i|x|}\gamma\[x,D(y)\].$$
\end{definition}

We denote the set of all $(\alpha,\beta,\gamma)$-derivations of degree $i$ of $\g$ by $(\mathfrak{D}_{\alpha,\beta,\gamma}(\g))_i$. For details on $(\alpha,\beta,\gamma)$-derivations of Lie superalgebras see \cite{ZZ}.

\medskip

We can identify $\g$ with the triple $\left([\cdot,\cdot],\rho,\Gamma\right)$, where $[\cdot,\cdot]$ is determined by the set of structure constants $\{c_{ij}^k\}$,  $\rho$ is determined by the set of structure constants $\{\rho_{ij}^k\}$, and $\Gamma$ is determined by the set of structure constants $\{\Gamma_{ij}^k\}$. Then we make the following definition:

\begin{definition}
Let $\g=\left([\cdot,\cdot],\rho,\Gamma\right)$ be a Lie superalgebra.
\begin{enumerate}
\item $\ab(\g)=\left(0,0,\Gamma\right)$ is the Lie superalgebra with the same underlying vector superspace than $\g$, but with trivial structure constants $c_{ij}^k$ and $\rho_{ij}^k$ ($[\cdot,\cdot]=0$ and $\rho=0$).
\item $\mathcal{F}(\g)=\left([\cdot,\cdot],\rho,0\right)$ is the Lie superalgebra with the same underlying vector superspace as $\g$, but with trivial structure constants $\Gamma_{ij}^k$ ($\Gamma=0$).
\end{enumerate}
\end{definition}

{
\begin{notation}
Denote by $\t(\g)$ the maximal dimension of a trivial subalgebra of $\g$, i.e.
$$\t(\g)=\max\{\dim\h\ |\ \h\ \text{is a subalgebra of } \g\ \text{and }\[\cdot,\cdot\]_\h=0\}.$$
\end{notation}
Now consider the set
$$\Delta_{\alpha\beta}=\left\{\g\in\LS\ \left|\ c_{ij}^k=0,\rho_{ir}^t=0,\Gamma_{rs}^k=0\ \text{for }
\begin{array}{l}
m-\alpha+1\leq i,j\leq m,\\
n-\beta+1\leq r,s,\leq n
\end{array}
\right.\right\}.$$
It is clear that $\g\in\Delta_{\alpha\beta}$ if and only if $\{e_{m-\alpha+1},\dots,e_m,f_{n-\beta+1},\dots,f_n\}$ is a trivial subalgebra of $\g$. Moreover, $(G\cdot\g)\cap\Delta_{\alpha\beta}\neq\emptyset$ if and only if $\t(\g)\geq\alpha+\beta$. Notice that $\Delta_{\alpha\beta}$ is a closed set but is not  $G$-stable.

\medskip

Let $B$ be a Borel subgroup of $G$ consisting of lower triangular matrices. Then the following result is obtained.

\begin{lemma}\label{lem:stable}
$\Delta_{\alpha\beta}$ is $B$-stable for every $1\leq\alpha\leq m$ and $1\leq \beta\leq n$.
\end{lemma}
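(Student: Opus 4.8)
The goal is to show that the lower-triangular Borel $B \subset G$ preserves the closed set $\Delta_{\alpha\beta}$. The plan is to unwind what the $G$-action does to the structure constants of $\g \in \Delta_{\alpha\beta}$ after hitting it with an element $g = (g_0, g_1) \in B$, where $g_0 \in \GL_m(\C)$ and $g_1 \in \GL_n(\C)$ are both lower triangular, and to check directly that the defining vanishing conditions of $\Delta_{\alpha\beta}$ are again satisfied. Recall that $\g \in \Delta_{\alpha\beta}$ exactly says that the span $W := \Span_\C\{e_{m-\alpha+1},\dots,e_m,f_{n-\beta+1},\dots,f_n\}$ is a trivial (abelian) subsuperalgebra, i.e. $\[W,W\] = 0$.

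First I would observe the key elementary fact about lower-triangular change of basis: if $g_0$ is lower triangular, then $g_0^{-1}$ is lower triangular as well, and hence $g_0^{-1} e_i \in \Span_\C\{e_i, e_{i+1}, \dots, e_m\}$ for each $i$; likewise $g_1^{-1} f_r \in \Span_\C\{f_r, f_{r+1}, \dots, f_n\}$. In particular, for the ``tail'' indices $i \geq m-\alpha+1$ and $r \geq n-\beta+1$ we get $g_0^{-1} e_i \in W_0 := \Span_\C\{e_{m-\alpha+1},\dots,e_m\}$ and $g_1^{-1} f_r \in W_1 := \Span_\C\{f_{n-\beta+1},\dots,f_n\}$, so that $g^{-1}$ maps $W$ into $W$. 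Since $W$ is a trivial subalgebra of $\g$, bilinearity of the original bracket gives $\[g^{-1}x, g^{-1}y\] = 0$ for all $x,y \in W$; therefore $(g\cdot\[\cdot,\cdot\])(x,y) = g\[g^{-1}x,g^{-1}y\] = g(0) = 0$ for all $x,y \in W$. That is exactly the statement that $W$ is a trivial subalgebra of $g\cdot\g$, i.e. $g\cdot\g \in \Delta_{\alpha\beta}$.

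To make this airtight at the level of the three families of structure constants, I would translate the above into the coordinates $\{c_{ij}^k, \rho_{ij}^k, \Gamma_{ij}^k\}$: the conditions defining $\Delta_{\alpha\beta}$ are $c_{ij}^k = 0$ for $i,j \geq m-\alpha+1$ (all $k$), $\rho_{ir}^t = 0$ for $i \geq m-\alpha+1$, $r \geq n-\beta+1$, and $\Gamma_{rs}^k = 0$ for $r,s \geq n-\beta+1$. For each bracket $\[e_i, e_j\]$, $\[e_i, f_r\]$, $\[f_r, f_s\]$ with tail indices, expand $g^{-1}e_i$, $g^{-1}e_j$, etc. in the basis using lower-triangularity; every term that appears is a bracket of two elements of $W$, which vanishes in $\g$. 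Hence the corresponding structure constants of $g\cdot\g$ vanish, which is precisely $g\cdot\g\in\Delta_{\alpha\beta}$.

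The computation is genuinely routine; there is no serious obstacle. The only point that warrants care is making sure the correct ``corner'' of the matrix is preserved — i.e. that it is the \emph{lower}-triangular Borel that stabilizes the set whose trivial subalgebra is spanned by the \emph{last} basis vectors (had we chosen upper-triangular matrices, the stable set would instead involve the first basis vectors). This is exactly why the statement pins down $B$ as the group of lower-triangular matrices. I would also remark that $\Delta_{\alpha\beta}$ fails to be $G$-stable in general precisely because a non-triangular $g$ can move $W$ to a different subspace, consistent with the earlier observation in the text that $(G\cdot\g)\cap\Delta_{\alpha\beta}\neq\emptyset$ merely detects $\t(\g)\geq\alpha+\beta$.
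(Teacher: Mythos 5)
Your proposal is correct and follows essentially the same argument as the paper: lower-triangularity of $b^{-1}$ forces $b^{-1}(e_i),b^{-1}(f_r)$ to stay in the span of the tail basis vectors, so all the relevant brackets of $b\cdot\g$ vanish because they are brackets of elements of the trivial subalgebra $W$ of $\g$. The extra remarks on coordinates and on why the lower-triangular Borel matches the last basis vectors are consistent with, but not needed beyond, the paper's proof.
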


\begin{proof}
Let $\g\in\Delta_{\alpha\beta}$, $b\in B$. If $m-\alpha+1\leq i,j\leq m$ and $n-\beta+1\leq r,s,\leq n$, then $b^{-1}(e_i),b^{-1}(e_j),b^{-1}(f_r),b^{-1}(f_s)\in\Span\{e_{m-\alpha+1},\dots,e_m\ |\ f_{n-\beta+1},\dots,f_n\}$ and thus
\begin{align*}
\[e_i,e_j\]_{b\cdot\g}=&b(\[b^{-1}(e_i),b^{-1}(e_j)\]_\g)=0,\\
\[e_i,f_r\]_{b\cdot\g}=&b(\[b^{-1}(e_i),b^{-1}(f_r)\]_\g)=0,\\
\[f_r,f_s\]_{b\cdot\g}=&b(\[b^{-1}(f_r),b^{-1}(f_s)\]_\g)=0.
\end{align*}
Therefore $(b\cdot\g)\in\Delta_{\alpha\beta}$.
\end{proof}

Next, we state an important result whose proof can be consulted in \cite{GO}, Proposition 1.17.

\begin{proposition}\label{prop:GB}
Let $G$ be a reductive algebraic group over $\C$ with Borel subgroup $B$ and let $X$ be an algebraic set on which $G$ acts rationally. For $x\in X$,
$$\overline{G\cdot x}= G\cdot\overline{B\cdot x}.$$
\end{proposition}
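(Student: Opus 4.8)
The plan is to deduce both inclusions from the completeness of the flag variety $G/B$, together with a closedness argument carried out in the product $X\times G/B$.

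The inclusion $G\cdot\overline{B\cdot x}\subseteq\overline{G\cdot x}$ is formal: $\overline{B\cdot x}\subseteq\overline{G\cdot x}$, and since every $g\in G$ acts on $X$ as a homeomorphism, the Zariski closure of a $G$-stable set is again $G$-stable; hence $\overline{G\cdot x}$ is $G$-stable and $G\cdot\overline{B\cdot x}\subseteq G\cdot\overline{G\cdot x}=\overline{G\cdot x}$. For the reverse inclusion it suffices to prove that the set $G\cdot\overline{B\cdot x}$ is \emph{closed}, because it lies between $G\cdot x$ and $\overline{G\cdot x}$, and any closed set in that range must equal $\overline{G\cdot x}$.

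To prove closedness, put $Z=\overline{B\cdot x}$, which is a closed $B$-stable subset of $X$, and consider
$$W=\{(x',gB)\in X\times G/B\ |\ g^{-1}\cdot x'\in Z\}.$$
This is well defined precisely because $Z$ is $B$-stable. It is closed: the map $X\times G\to X$, $(x',g)\mapsto g^{-1}\cdot x'$, is a morphism (the action is rational), so the preimage $\widetilde W$ of $Z$ is closed in $X\times G$; this $\widetilde W$ is stable under the right $B$-action on the $G$-factor, hence saturated for the principal $B$-bundle $X\times G\to X\times G/B$, which is an open morphism, so the image $W$ of $\widetilde W$ is closed. Finally, $G/B$ is complete, so the projection $X\times G/B\to X$ is a closed map, and one checks directly that its restriction to $W$ has image $\operatorname{pr}_1(W)=G\cdot Z=G\cdot\overline{B\cdot x}$; therefore this set is closed, which completes the proof.

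The only substantive input is the completeness of $G/B$, which is where reductivity of $G$ is used; everything else is bookkeeping. The step that requires a little care is the descent identifying $W$ as a closed subset of $X\times G/B$: this rests on the fact that $G\to G/B$ is a locally trivial principal $B$-bundle (so the same holds after base change to $X$), hence an open morphism, which is what lets us pass from the closed saturated set $\widetilde W$ downstairs to the closed set $W$.
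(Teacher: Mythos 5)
Your proof is correct. Note that the paper gives no argument of its own for this proposition: it simply defers to \cite{GO}, Proposition 1.17. Your write-up is essentially the standard proof found there, namely: reduce to showing that $G\cdot Z$ is closed for $Z=\overline{B\cdot x}$ a closed $B$-stable set, and establish this by descending the closed saturated set $\widetilde W\subset X\times G$ to a closed subset $W\subset X\times G/B$ and then projecting along the proper map $X\times G/B\to X$. All the individual steps check out: the well-definedness of $W$ uses $B$-stability of $Z$, the descent uses that $X\times G\to X\times G/B$ is open and that a saturated closed set has closed image under an open surjection, and the final step uses completeness of $G/B$. One small inaccuracy in your closing remark: completeness of $G/B$ is not where reductivity enters --- $G/B$ is projective for any connected algebraic group with Borel subgroup $B$, so the hypothesis of reductivity in the statement is stronger than what your (correct) argument actually needs.
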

}

Next we summarize the list of relations that will be used.

\begin{lemma}\label{lem:invariants}
Let $\g$ and $\h$ be Lie superalgebras of dimension $(m|n)$. If $\g\rightarrow\h$ then the following relations must hold:
\begin{enumerate}
\item $\dim O(\g)>\dim O(\h)$.
\item\label{inv:gamma} If $\Gamma(\g)\equiv 0$ then $\Gamma(\h)\equiv 0$.
\item\label{inv:centro} $\dim\z(\g)_i\leq \dim\z(\h)_i$ for $i\in\Z_2$, where $\z(\g)$ is the center of $\g$.
\item\label{inv:der} $\dim\[\g,\g\]_i\geq\dim\[\h,\h\]_i$ for $i\in\Z_2$.
\item\label{inv:ab} $\ab(\g)\rightarrow\ab(\h)$.
\item\label{inv:f} $\mathcal{F}(\g)\rightarrow \mathcal{F}(\h)$.
\item\label{inv:dergen} $\dim\left(\mathfrak{D}_{\alpha,\beta,\gamma}(\g)\right )_i\leq\dim\left(\mathfrak{D}_{\alpha,\beta,\gamma}(\h) \right)_i$ for $i\in\Z_2$.
\item\label{inv:trivial} $\t(\g)\leq \t(\h)$.
\end{enumerate}
\end{lemma}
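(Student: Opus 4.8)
The plan is to verify each of the eight invariants separately, relying in each case on the fact that $\g \rightarrow \h$ means $\h \in \overline{O(\g)}$, so there is a curve $g_t$ (or more generally a one-parameter family of structure constants in $O(\g)$) with limit $\h$; in practice Lemma \ref{def-equiv-defor} lets us realize the degeneration by an explicit operator $g_t \in \GL_m(\C(t)) \oplus \GL_n(\C(t))$, and the invariants either are dimensions of orbits/subspaces (hence semicontinuous under specialization) or are themselves degeneration statements for auxiliary structures.

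First I would dispose of the purely topological items. Item (1) is standard: $O(\h) \subsetneq \overline{O(\g)}$ since $\h \not\cong \g$ would fail only if $\h$ were dense in $O(\g)$, and in any case $\dim O(\h) = \dim \overline{O(\h)} \le \dim \overline{O(\g)} = \dim O(\g)$ with equality forcing $\overline{O(\h)} = \overline{O(\g)}$ hence $O(\g) \subset \overline{O(\h)}$, giving $\h \rightarrow \g$ and thus $\g \cong \h$ — so when $\g \not\cong \h$ the inequality is strict (and we only invoke this invariant to rule out degenerations, so the strict form is what is used). Items (3) and (4) follow from upper/lower semicontinuity: the center $\z(\g)_i$ is the kernel of the linear map $x \mapsto [x,-]$ restricted to $\g_i$, whose rank can only drop in the limit, so $\dim \z(\cdot)_i$ is lower semicontinuous, giving (3); dually $[\g,\g]_i$ is the image of a bilinear map built from the structure constants, and image dimension is lower semicontinuous as the structure constants specialize, giving $\dim[\g,\g]_i \ge \dim[\h,\h]_i$, which is (4). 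Item (7) is the same semicontinuity argument applied to the linear system defining $(\mathfrak{D}_{\alpha,\beta,\gamma}(\g))_i$: it is the solution space of a linear system whose coefficients are polynomials in the structure constants, so its dimension is upper semicontinuous, yielding (7). Item (2): the locus $\{\Gamma \equiv 0\}$ is Zariski closed and $G$-stable, so if $O(\g)$ lies in it then so does $\overline{O(\g)} \ni \h$.

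Next the two ``functorial'' degenerations, items (5) and (6). Here I would argue that the assignments $\g \mapsto \ab(\g)$ and $\g \mapsto \mathcal{F}(\g)$ are morphisms of varieties $\LS \to \LS$ that are $G$-equivariant (they just zero out certain blocks of structure constants, which commutes with the change-of-basis action since the action preserves the block structure — $G = \GL_m \oplus \GL_n$ acts on the three structure-constant tensors independently). A $G$-equivariant morphism sends $\overline{O(\g)}$ into $\overline{O(\ab(\g))}$ (resp. $\overline{O(\mathcal{F}(\g))}$), because it sends $O(\g)$ onto $O(\ab(\g))$ and images of irreducible closures land in closures of images; applying this to $\h \in \overline{O(\g)}$ gives $\ab(\h) \in \overline{O(\ab(\g))}$, i.e. $\ab(\g) \rightarrow \ab(\h)$, and likewise for $\mathcal{F}$.

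Finally, item (8), which I expect to be the main obstacle since $\t(\cdot)$ is not obviously semicontinuous in the naive direction. The plan is to use Proposition \ref{prop:GB} together with Lemma \ref{lem:stable}. Fix $\alpha,\beta$ with $\alpha + \beta = \t(\g)$; then $(G \cdot \g) \cap \Delta_{\alpha\beta} \ne \emptyset$, and by the discussion preceding Lemma \ref{lem:stable} we may choose a representative of $O(\g)$ lying in $\Delta_{\alpha\beta}$, indeed (conjugating so that the relevant trivial subalgebra is spanned by the last basis vectors) we may assume $\overline{B \cdot \g} \cap \Delta_{\alpha\beta} \ne \emptyset$ after replacing $\g$ within its orbit. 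Since $\Delta_{\alpha\beta}$ is $B$-stable (Lemma \ref{lem:stable}) and closed, $\overline{B\cdot \g} \subset \Delta_{\alpha\beta}$ for such a representative, hence $\overline{G \cdot \g} = G \cdot \overline{B \cdot \g} \subset G \cdot \Delta_{\alpha\beta}$; but every element of $\Delta_{\alpha\beta}$ has a trivial subalgebra of dimension $\alpha + \beta$, and $\t(\cdot)$ is $G$-invariant, so every element of $\overline{G\cdot\g}$ — in particular $\h$ — satisfies $\t \ge \alpha + \beta = \t(\g)$. The delicate point to get right is the reduction to a representative with $\overline{B\cdot\g} \subset \Delta_{\alpha\beta}$ rather than merely $(G\cdot\g)\cap\Delta_{\alpha\beta}\ne\emptyset$: one must observe that choosing the trivial subalgebra to be spanned by $\{e_{m-\alpha+1},\dots,e_m,f_{n-\beta+1},\dots,f_n\}$ places $\g$ itself in $\Delta_{\alpha\beta}$, and then $B$-stability of $\Delta_{\alpha\beta}$ forces the whole Borel closure inside it, which is exactly what Proposition \ref{prop:GB} needs.
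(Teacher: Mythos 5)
Your argument for item (8) is essentially the paper's own proof: place a maximal trivial subalgebra in standard position so that a representative of the orbit lies in $\Delta_{\alpha\beta}$, invoke the $B$-stability (Lemma \ref{lem:stable}) and closedness of $\Delta_{\alpha\beta}$, and apply Proposition \ref{prop:GB} to conclude $\overline{G\cdot\g}\subset G\cdot\Delta_{\alpha\beta}$, hence $\t(\h)\geq\t(\g)$. For items (1)--(7) the paper merely cites Lemmas 2.3--2.5 of \cite{AH}, and your semicontinuity and $G$-equivariance arguments are the standard ones behind those citations, so the proposal is correct and takes essentially the same route.
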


\begin{proof}
The proof for invariants (1)-(7) can be found in \cite{AH}, Lemmas 2.3, 2.4 and 2.5. We will prove (8).

Let $\g,\h\in\LS$ such that $\g\to\h$. Let $\mathfrak{k}$ be a trivial subalgebra of $\g$ of dimension $\alpha+\beta=\t(\g)$. Choose $g\in G$ such that $(g\cdot\g)\in\Delta_{\alpha\beta}$. By Lemma \ref{lem:stable}, $B\cdot(g\cdot\g)\in\Delta_{\alpha\beta}$. Since $\Delta_{\alpha\beta}$ is a closed set, it follows that $\overline{B\cdot(g\cdot\g)}\subset\Delta_{\alpha\beta}$.  By Proposition \ref{prop:GB}, $ \h\in \overline{G\cdot(g\cdot\g)}=G\cdot\overline{B\cdot(g\cdot\g)}\subset G\cdot\Delta_{\alpha\beta}$. Therefore, $\t(\g)\leq\t(\h)$.

\end{proof}

\medskip

\section{On the classification problem of  Lie superalgebras}

A Lie superalgebra  $\g=\g_0 \oplus \g_1$ can be identified with a triple ($[\cdot, \cdot ], \,  \rho,  \, \Gamma $), where
\begin{enumerate}[(i)]
\item  $[ \cdot, \cdot] =  \[\cdot, \cdot \]|_{ \g_0 \times \g_0}$ is a Lie bracket on $\g_0$. 
\item  $\rho: \g_0 \to \gl(\g_1)$  is a representation defined by  $\rho(x):= \[x, \cdot\]$, 
\item $\Gamma: = \[\cdot, \cdot \]|_{\g_1 \times \g_1} :   \g_1 \times \g_1 \to \g_0$  is a symmetric bilinear map
satisfying the following identities:
\end{enumerate}
\begin{itemize}
\item[{(J1)}] $[x, \Gamma(u,v)]= \Gamma(\rho(x)u, v) + \Gamma(u, \rho(x)v), \; \text{ for } \;  x \in \g_0 \text{ and } u,v\in \g_1$.
\item [(J2)] $  \rho(\Gamma(u,v))(w) +  \rho(\Gamma(v,w))(u) +  \rho(\Gamma(u,w))(v)=0,  \; \text{ for } \; u,v,w \in \g_1$.
\end{itemize}

Using this notation,  the action of the group $\GL_m  (\C) \oplus\GL_n(\C)$ on  $\LS$ can by written as:
\begin{equation}
(T,S)\cdot ([\cdot, \cdot ], \,   \rho,  \, \Gamma)=  ([\cdot, \cdot ]^\prime, \,   \rho^\prime,  \, \Gamma^\prime),
\label{action}
\end{equation}
where
\begin{eqnarray*}
\label{iso1}
[\cdot, \cdot]^\prime &=& T( [T^{-1}(\cdot), \;  T^{-1}(\cdot) ]), \\  
\label{iso2}  
\rho^\prime(\cdot)&=&S \circ \rho(T^{-1}(\cdot)) \circ S^{-1},\\  
\label{iso3}
\Gamma^\prime (\cdot, \cdot)&=&T(\Gamma( S^{-1}(\cdot),  S^{-1}(\cdot))).
\end{eqnarray*}

Hence, classifying Lie superalgebras is equivalent to finding the orbits of the action described in \eqref{action}. 

\medskip

On the other hand, notice that  a  Lie superalgebra $\g= \g_0 \oplus \g_1$ is nilpotent if and only if $\g_0$ is a nilpotent  Lie algebra and $\g_0$ acts on $\g_1$ by nilpotent endomorphisms.  The technique  used in this work for classifying  all possi\-ble structures of nilpotent Lie superalgebras of dimension $(m|n)$ with $m+n \leq 5$ consists {of} three steps{:
\begin{enumerate}
%\item To fix a $m$-dimensional  nilpotent Lie algebra $\g_0$. 
\item Describe the nilpotent $\g_0$-modules of dimension $n$, for every nilpotent Lie algebra $\g_0$.
\item Describe all possible  symmetric bilinear maps $\Gamma: \g_1 \times \g_1 \to
\g_0$ satisfying (J1) and (J2), where $\g_1$ is a  nilpotent Lie $\g_0$-module of dimension $n$.
\item Find representatives for the isomorphism classes of such Lie superalgebras. 
\end{enumerate}}

\subsection{Lie superalgebras for which $\g_0$ is  an abelian Lie algebra and the action of $\g_0$ on $\g_1$ is trivial.} This is an important type of nilpotent Lie superalgebra.
Under the hypotheses above, a Lie superalgebra  $\g$ is completely determined by  the symmetric bilinear map $\Gamma: \g_1 \times \g_1 \to \g_0$. Given bases $\{ e_1, \dots, e_m\}$ and 
$ \{f_1, \cdots, f_n \}$ for $\g_0$ and $\g_1$ respectively, we write $\Gamma(f_i, f_j)= \displaystyle\sum_{k=1}^m \Gamma_{ij}^k e_k $. Then  we can identify 
$$
\Gamma \longleftrightarrow ( \Gamma^1, \dots,  \Gamma^m ),
$$
where $\Gamma^k =  (\Gamma^k_{ij})$ is a symmetric matrix, for $k\in \{ 1, \dots, m\}$.  Hence,  under this identification, two Lie superalgebras $ ( \Gamma^1, \dots,  \Gamma^m )  $ and  $( {\Gamma^\prime}^1, \dots, {\Gamma^\prime}^m )$ belong to the same orbit under the action given in \eqref{action} if and only if there exist $T\in \GL_m(\C)$ and $S\in GL_n(\C)$ such that 
$$
{\Gamma^\prime}^k= \sum_{i=1}^m T_{ki}S^t\Gamma^iS, \quad  k = 1, \dots, m. 
$$
Notice that the problem of finding the orbits is considered a so-called  ``wild pro\-blem''.
In order to find representatives for the orbits it is useful to study the cases according to the number of simultaneously diagonalizable matrices in  $\{ \Gamma^1, \dots, \Gamma^m\}$. This number is invariant in each orbit. 
 To illustrate this technique, in Section 7.3 we classify the Lie superalgebras of dimension $(2|3)$ having $[\cdot, \cdot]=0$ and $\rho=0$. Our results  show that some $(2|3)$-nilpotent Lie superalgebras are missing in  \cite{He}  and in \cite{MF}.
 Moreover, we show that there are no infinite families  of mutually non-isomorphic  superalgebras, contrary to what is claimed in \cite{MF}.  

\medskip

\section{Nilpotent Lie superalgebras of dimension $\leq 4$}

In this section we summarize the algebraic and geometric classifications of Lie superalgebras of dimension $m+n\leq 4$. We provide, in every dimension, the rigid elements by computing the group $(H^2(\g,\g))_0$. Also, non-degeneration criteria are
%non-degeneration reasons are 
obtained by applying Lemma \ref{lem:invariants}. After discarding possible degenerations, we give a list with all primary degenerations (those that cannot be obtained by transitivity). We exemplify these techniques in dimension 3. Finally, the irreducible components of every variety are obtained by looking at the Hasse diagram of degenerations of all superalgebras.

\subsection{Dimension 2}

\begin{theorem}
Nilpotent Lie superalgebras of dimension 2 are, up to isomorphism:
\begin{table}[h]
\begin{tabular}{lll}
$(2|0)_0$: & & $\[\cdot,\cdot\]=0.$\\
$(1|1)_0$: & & $\[\cdot,\cdot\]=0.$\\
$(1|1)_1$: & & $\[f_1,f_1\]=e_1.$\\
$(0|2)_0$: & & $\[\cdot,\cdot\]=0$.\\
\end{tabular}
\end{table}
\end{theorem}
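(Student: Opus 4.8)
The plan is to enumerate all possible Lie superalgebra structures on a $(m|n)$-dimensional supervector space with $m+n=2$, subject to the nilpotency condition, and then reduce the list up to isomorphism using the action \eqref{action}. There are three cases according to the dimension type: $(2|0)$, $(1|1)$ and $(0|2)$.

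In the $(2|0)$ case, $\g=\g_0$ is simply a $2$-dimensional nilpotent Lie algebra, hence abelian (a $2$-dimensional nilpotent Lie algebra has $[\g,\g]\subsetneq\g$, so $\dim[\g,\g]\leq 1$, and if $\dim[\g,\g]=1$ then $\g$ is the non-abelian solvable algebra, which is not nilpotent); this forces $\[\cdot,\cdot\]=0$. In the $(0|2)$ case, $\g=\g_1$ with $\g_0=0$, so $[\cdot,\cdot]=0$, $\rho=0$, and $\Gamma\colon\g_1\times\g_1\to\g_0=0$ is forced to be zero; thus $\[\cdot,\cdot\]=0$. In the $(1|1)$ case, with basis $\{e_1\}$ for $\g_0$ and $\{f_1\}$ for $\g_1$, the bracket $[\cdot,\cdot]$ on $\g_0$ vanishes for dimension reasons, the representation $\rho(e_1)\in\gl(\g_1)$ must be a nilpotent endomorphism of a $1$-dimensional space by the nilpotency criterion stated before, hence $\rho=0$, and the only remaining datum is $\Gamma(f_1,f_1)=\lambda e_1$ for some $\lambda\in\C$. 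One checks that (J1) and (J2) are automatically satisfied (both involve $\rho$ or already-vanishing terms). So there are exactly two structures in the $(1|1)$ case: $\lambda=0$ and $\lambda\neq0$; rescaling $f_1\mapsto \mu f_1$ sends $\lambda\mapsto \mu^2\lambda$, so any $\lambda\neq 0$ can be normalized to $1$, giving the representatives $(1|1)_0$ with $\[\cdot,\cdot\]=0$ and $(1|1)_1$ with $\[f_1,f_1\]=e_1$.

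Finally I would observe that the four superalgebras listed are pairwise non-isomorphic: isomorphisms are even maps preserving the dimension type $(m|n)$, so algebras of different type cannot be isomorphic, and $(1|1)_0$ and $(1|1)_1$ are distinguished by $\dim\[\g,\g\]$ (equivalently, $\Gamma\equiv0$ versus $\Gamma\not\equiv0$), which is an isomorphism invariant. This completes the classification.

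There is essentially no hard part here; the only point requiring a word of care is verifying that the Jacobi-type identities (J1), (J2) impose no extra constraint in the $(1|1)$ case and that nilpotency does indeed force $\rho=0$ and the non-abelian $2$-dimensional Lie algebra to be excluded. Everything else is a direct finite enumeration and a trivial normalization of a single scalar.
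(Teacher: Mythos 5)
Your proof is correct and follows exactly the three-step method the paper outlines in Section 3 (classify the nilpotent $\g_0$-module structures, then the admissible $\Gamma$'s, then normalize); the paper itself states this theorem without proof, treating it as routine. Your case analysis, the verification that (J1)--(J2) are vacuous in the $(1|1)$ case, the normalization of $\lambda$ to $1$, and the non-isomorphism argument are all sound and complete.
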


\medskip

\begin{proposition}\label{prop:rigid2}
The Lie superalgebra $(1|1)_1$ is rigid in the variety $\mathcal{LS}^{(1|1)}$.
\end{proposition}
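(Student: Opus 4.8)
The plan is to show that the orbit $O((1|1)_1)$ is open in $\mathcal{LS}^{(1|1)}$ by exhibiting the variety $\mathcal{LS}^{(1|1)}$ explicitly and checking that the complement of this orbit is a proper closed subset. First I would note that for dimension $(1|1)$ the only possibly nonzero bracket is $\[f_1,f_1\]=\Gamma_{11}^1 e_1$, since $\[e_1,e_1\]=0$ by skew-symmetry, $\[e_1,f_1\]$ lies in $\g_1$ and must vanish because $\g_1$ is one-dimensional and $\rho(e_1)$ has to be nilpotent (as $\g_0$ acts nilpotently on $\g_1$ in a nilpotent Lie superalgebra), and the super Jacobi identities are then automatically satisfied. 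Hence $\mathcal{LS}^{(1|1)}$ is identified with the affine line $\mathbb{A}^1$ with coordinate $\Gamma_{11}^1$, and the $G=\GL_1(\C)\oplus\GL_1(\C)$ action is $(T,S)\cdot\Gamma_{11}^1 = T\, S^2\, \Gamma_{11}^1$.

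The key step is then immediate: the $G$-orbit of $(1|1)_1$ (where $\Gamma_{11}^1=1$) is exactly $\C^\ast = \mathbb{A}^1\setminus\{0\}$, which is open in $\mathbb{A}^1=\mathcal{LS}^{(1|1)}$, so $(1|1)_1$ is rigid by Definition \ref{def:rigid}. Alternatively, to fit the cohomological method announced in the paper, I would compute $(H^2(\g,\g))_0$ for $\g=(1|1)_1$ and verify it vanishes: the even $2$-cochains relevant here are the symmetric bilinear maps $\g_1\times\g_1\to\g_0$ together with maps $\g_0\wedge\g_0\to\g_0$ and $\g_0\otimes\g_1\to\g_1$; dimension count shows the space of such cochains is $1$-dimensional (spanned by the cochain dual to $\Gamma$), the space of relevant even $1$-cochains (even linear maps $\g\to\g$, i.e. $\operatorname{diag}(a,b)$) is $2$-dimensional, and the coboundary map sends $\operatorname{diag}(a,b)$ to the cochain $(a+2b)\Gamma$ evaluated appropriately, which is surjective onto the $1$-dimensional cocycle space; after accounting for the $2$-cocycle condition one gets $(H^2(\g,\g))_0=0$, hence rigidity.

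The only mild obstacle is bookkeeping: one must be careful that "rigid in $\mathcal{LS}^{(1|1)}$'' as stated refers to the full variety of Lie superalgebras of dimension $(1|1)$ and not to the nilpotent subvariety, so I would double-check that no non-nilpotent $(1|1)$ Lie superalgebra exists that could enlarge the ambient variety — but since $\g_0$ is $1$-dimensional abelian and any action of it on the $1$-dimensional $\g_1$ is by a scalar, and $\[f_1,f_1\]=e_1\ne 0$ forces (via (J1)) that scalar to be $0$, every $(1|1)$-dimensional Lie superalgebra is nilpotent, so $\mathcal{LS}^{(1|1)}=\mathcal{N}_{(1|1)}$ and the two readings coincide. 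With this checked, the direct orbit computation (or the cohomology computation) finishes the proof.
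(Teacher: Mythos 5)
Your second route --- computing $(H^2(\g,\g))_0=0$ --- is exactly the paper's proof, and its core is sound: the only even $2$-cocycle is a scalar multiple of $f_1^*\wedge f_1^*\otimes e_1$, which is a coboundary (it equals $d^1(f_1^*\otimes f_1)$), so the cohomology vanishes and rigidity in the full variety follows. One small correction there: the space of even $2$-cochains is $2$-dimensional, not $1$-dimensional, since it also contains $e_1^*\wedge f_1^*\otimes f_1$; that extra cochain simply fails the cocycle condition, so your conclusion is unaffected.

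Your first route, however, has a genuine gap. The variety $\mathcal{LS}^{(1|1)}$ is not the affine line: the structure constants are $(\rho_{11}^1,\Gamma_{11}^1)\in\C^2$ subject to the single Jacobi relation $\rho_{11}^1\,\Gamma_{11}^1=0$, so $\mathcal{LS}^{(1|1)}$ is the union of the two coordinate axes, a reducible curve. Your claim that every $(1|1)$-dimensional Lie superalgebra is nilpotent is false: $\[e_1,f_1\]=f_1$, $\[f_1,f_1\]=0$ is a non-nilpotent Lie superalgebra of dimension $(1|1)$. What your (J1) argument actually shows is only the implication ``$\Gamma\neq0$ forces $\rho=0$''; it does not rule out superalgebras with $\Gamma=0$ and $\rho\neq0$, and the premise that $\rho(e_1)$ must be nilpotent is valid only inside $\mathcal{N}_{(1|1)}$, not in $\mathcal{LS}^{(1|1)}$. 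The direct argument is salvageable --- the orbit of $(1|1)_1$ is $\{\rho_{11}^1=0,\ \Gamma_{11}^1\neq0\}$, whose complement in the union of the two lines is the closed line $\{\Gamma_{11}^1=0\}$, so the orbit is still open and rigidity holds --- but as written both the identification $\mathcal{LS}^{(1|1)}\cong\mathbb{A}^1$ and the claim $\mathcal{LS}^{(1|1)}=\mathcal{N}_{(1|1)}$ are incorrect, and the ``key step'' as stated rests on them.
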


\begin{proof}
$(H^2((1|1)_1,(1|1)_1))_0=0$.
\end{proof}

\begin{theorem}\ 
\begin{enumerate}
\item The irreducible component of the variety $\mathcal{N}_{(2|0)}$ is $\mathcal{C}_1=\overline{O((2|0)_0)}$.
\item The irreducible component of the variety $\mathcal{N}_{(1|1)}$ is $\mathcal{C}_1=\overline{O((1|1)_1)}$.
\item The irreducible component of the variety $\mathcal{N}_{(0|2)}$ is $\mathcal{C}_1=\overline{O((0|2)_0)}$.
\end{enumerate}
\end{theorem}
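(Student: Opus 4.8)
The goal is to identify the unique irreducible component of each of the three varieties $\mathcal{N}_{(2|0)}$, $\mathcal{N}_{(1|1)}$, $\mathcal{N}_{(0|2)}$. The strategy in each case is the same: first show that every nilpotent Lie superalgebra of the given dimension degenerates to, or is, one distinguished superalgebra whose orbit closure then exhausts the whole variety; equivalently, show the variety is irreducible with a dense orbit. I would begin by invoking the classification in the preceding theorem, which lists all isomorphism classes in dimensions $(2|0)$, $(1|1)$, and $(0|2)$.

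For $\mathcal{N}_{(2|0)}$ and $\mathcal{N}_{(0|2)}$: the only nilpotent Lie (super)algebra structure is the abelian one, so $\mathcal{N}_{(2|0)} = \{(2|0)_0\}$ and $\mathcal{N}_{(0|2)} = \{(0|2)_0\}$ are single points (the orbit of $\mathfrak{a}$ is the whole variety). Hence each is trivially irreducible and equals $\overline{O((2|0)_0)}$, respectively $\overline{O((0|2)_0)}$. This case is immediate and requires only the observation that the structure-constant space reduces to a point.

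For $\mathcal{N}_{(1|1)}$: here there are two isomorphism classes, $(1|1)_0$ (abelian) and $(1|1)_1$ (with $\[f_1,f_1\]=e_1$). By Proposition \ref{prop:rigid2}, $(1|1)_1$ is rigid, so $O((1|1)_1)$ is open in $\mathcal{LS}^{(1|1)}$, hence open in the closed subvariety $\mathcal{N}_{(1|1)}$; thus $\overline{O((1|1)_1)}$ contains an irreducible component $\mathcal{C}_1$. It remains to check that $\mathcal{N}_{(1|1)}$ has no other component, i.e. that $(1|1)_1 \to (1|1)_0$ and that $\overline{O((1|1)_1)}$ is all of $\mathcal{N}_{(1|1)}$. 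The degeneration $(1|1)_1 \to (1|1)_0$ follows from Lemma \ref{def-equiv-defor}: take $g_t = \operatorname{id}_1 \oplus (t\cdot\operatorname{id}_1)$ acting on $\g = (1|1)_1$, so that $g_t\cdot\[f_1,f_1\] = t^{-1}\cdot t^2 e_1 = t\, e_1 \to 0$ as $t\to 0$, and the limit is $(1|1)_0$; alternatively, since every superalgebra degenerates to $\mathfrak{a} = (1|1)_0$. Because the two-element orbit poset of $\mathcal{N}_{(1|1)}$ thus has $(1|1)_1$ as its unique maximal element, $\overline{O((1|1)_1)} = \mathcal{N}_{(1|1)}$ and this is the sole irreducible component.

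The only point requiring genuine (but routine) verification is the single degeneration $(1|1)_1 \to (1|1)_0$, and that already follows for free from the universal degeneration to $\mathfrak a$ noted after Lemma \ref{def-equiv-defor}; so there is no real obstacle here — the main content was already established in Proposition \ref{prop:rigid2}, and the present statement is essentially its corollary together with the fact that each variety has at most two orbits.
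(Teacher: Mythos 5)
Your argument is correct and matches the paper's (implicit) reasoning exactly: the $(2|0)$ and $(0|2)$ varieties are single points, and for $(1|1)$ the rigidity of $(1|1)_1$ from Proposition \ref{prop:rigid2} together with the universal degeneration of every superalgebra to the abelian one $\mathfrak{a}=(1|1)_0$ (noted right after Lemma \ref{def-equiv-defor}) gives $\overline{O((1|1)_1)}=\mathcal{N}_{(1|1)}$, which is precisely the content of the paper's Hasse diagram. One small slip: with the operator $g_t=\operatorname{id}_1\oplus(t\cdot\operatorname{id}_1)$ the action formula gives $g_t\cdot\[f_1,f_1\]=t^{-2}e_1$, which diverges rather than tends to $0$; the computation you display actually corresponds to the scaling $g_t=t^{-1}(\operatorname{id}_1\oplus\operatorname{id}_1)$, i.e.\ the universal degeneration you correctly cite as the alternative, so the conclusion stands.
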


\scriptsize
 
\begin{center}

\begin{tikzpicture}[->,>=stealth',shorten >=0.08cm,auto,node distance=1.25cm,
                    thick,main node/.style={rectangle,draw,fill=gray!12,rounded corners=1.5ex,font=\sffamily \bf \bfseries },
                    purple node/.style={rectangle,draw, color=purple,fill=gray!12,rounded corners=1.5ex,font=\sffamily \bf \bfseries },
                    orange node/.style={rectangle,draw, color=orange,fill=gray!12,rounded corners=1.5ex,font=\sffamily \bf \bfseries },
                    green node/.style={rectangle,draw, color=green,fill=gray!12,rounded corners=1.5ex,font=\sffamily \bf \bfseries },
                    olive node/.style={rectangle,draw, color=olive,fill=gray!12,rounded corners=1.5ex,font=\sffamily \bf \bfseries },
                    connecting node/.style={circle, draw, color=purple },
                    rigid node/.style={rectangle,draw,fill=black!20,rounded corners=1.5ex,font=\sffamily \tiny \bfseries },style={draw,font=\sffamily \scriptsize \bfseries }]
                    
\node (21)   {$\dim O(\g)$};

\node (11) [below          of=21]      {$1$};
\node (01) [below          of=11]      {$0$};
\node (12) [right          of=11]      {};
\node (13) [right          of=12]      {};
\node (14) [right          of=13]      {};
\node (02) [right          of=01]      {};
\node (03) [right          of=02]      {};
\node (04) [right          of=03]      {};

\node [main node] (111)  [right of =12]                      {$(1|1)_1$ };
\node [main node] (200)  [right of =01]                      {$(2|0)_0$ };
\node [main node] (110)  [right of =02]                      {$(1|1)_0$ };
\node [main node] (020)  [right of =03]                      {$(0|2)_0$ };

\path[every node/.style={font=\sffamily\small}]

(111)  edge [bend right=0, color=black] node{}  (110);

\end{tikzpicture}

\end{center}

\normalsize

\subsection{Dimension 3}

\begin{theorem}
Nilpotent Lie superalgebras of dimension 3 are, up to isomorphism:
\begin{table}[h]
\begin{centering}
\begin{tabular}{llll}
$(3|0)_0$: & & $\[\cdot,\cdot\]=0.$ &\\
$(3|0)_1$: & & $\[e_1,e_2\]=e_3.$ &\\
$(2|1)_0$: & & $\[\cdot,\cdot\]=0.$ &\\
$(2|1)_1$: & & $\[f_1,f_1\]=e_1.$ & \\
$(1|2)_0$: & & $\[\cdot,\cdot\]=0.$ & \\
$(1|2)_1$: & & $\[f_1,f_1\]=e_1.$ & \\
$(1|2)_2$: & & $\[f_1,f_2\]=e_1.$ & \\
$(1|2)_3$: & & $\[e_1,f_2\]=f_1.$ & \\
$(0|3)_0$: & & $\[\cdot,\cdot\]=0.$ &\\
\end{tabular}
\end{centering}
\end{table}
\end{theorem}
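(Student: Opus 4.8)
The plan is to establish the list of $9$ isomorphism classes by following the three-step technique announced in Section~3, specialized to the case $m+n=3$. First I would enumerate the nilpotent Lie algebras $\g_0$ of dimension $m\leq 3$: for $m=0,1,2$ the only nilpotent Lie algebra is abelian, and for $m=3$ there are exactly two, namely the abelian one $\C^3$ and the Heisenberg algebra $\h_1$ with $\[e_1,e_2\]=e_3$. This immediately produces the two superalgebras with $n=0$, namely $(3|0)_0$ and $(3|0)_1$, and the trivially-bracketed candidates $(m|n)_0$ for each signature with $n>0$.

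Next, for each such $\g_0$ and each $n$ with $m+n=3$, I would describe the nilpotent $\g_0$-modules $\g_1$ of dimension $n$ (step 1). Since $\g_0$ must act by nilpotent endomorphisms, and $\dim\g_1\leq 3$, the representation theory is elementary: for $m\leq 1$ the only nilpotent action of an abelian algebra on a space of dimension $\leq 2$ is either trivial or (when $m=1$, $n=2$) the single Jordan block $\rho(e_1)=\left(\begin{smallmatrix}0&1\\0&0\end{smallmatrix}\right)$; for the signatures $(2|1)$ and $(0|3)$ every nilpotent endomorphism of a $1$-dimensional space is zero, so the action is trivial. This is what forces the module to be essentially rigid and keeps the count small.

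Then (step 2) for each pair $(\g_0,\g_1)$ I would solve for all symmetric bilinear maps $\Gamma:\g_1\times\g_1\to\g_0$ satisfying (J1) and (J2). When the action is trivial, (J2) is vacuous and (J1) says only that $\Gamma$ takes values in the center of $\g_0$, so one is classifying $m$-tuples of symmetric $n\times n$ matrices up to the action ${\Gamma'}^k=\sum_i T_{ki}S^t\Gamma^i S$ described after \eqref{action}; for $n=1$ this is just a single scalar up to scaling, giving $\Gamma=0$ or $\Gamma(f_1,f_1)=e_1$, which accounts for $(2|1)_1$ and $(1|2)_1$. For $(1|2)$ with nontrivial action one checks (J1)-(J2) against the Jordan block and finds, up to the residual change of basis, exactly the two extra possibilities $(1|2)_2$ with $\Gamma(f_1,f_2)=e_1$ and $(1|2)_3$ with $\Gamma=0$ but $\rho(e_1)f_2=f_1$. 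Finally (step 3) I would verify that the listed representatives are pairwise non-isomorphic, using the even change-of-basis group together with invariants such as $\dim\z(\g)_i$, $\dim\[\g,\g\]_i$, and whether $\Gamma\equiv 0$, which already separate all nine algebras.

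The main obstacle is the bookkeeping in step 2 for the signature $(1|2)$: one must carefully carry out the compatibility check of (J1) and (J2) with the nilpotent action and then reduce the resulting families of $\Gamma$'s modulo the stabilizer of that action inside $\GL_1(\C)\oplus\GL_2(\C)$, making sure no case is missed and that apparent parameters can be normalized away. Everything else is routine: the representation theory is trivial in all other signatures, and the non-isomorphism part is handled by the invariants recalled in Lemma~\ref{lem:invariants}.
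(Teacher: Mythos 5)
The paper offers no written proof of this theorem; it is presented as an instance of the three-step technique of Section~3 (which is only worked out in detail for the $(2|3)$, $[\cdot,\cdot]=0$, $\rho=0$ case), so your overall strategy is exactly the intended one, and your treatment of the signatures $(3|0)$, $(2|1)$, $(0|3)$ and of the module classification is fine. However, your case analysis for the signature $(1|2)$ contains a genuine error precisely at the step you single out as the crux. You claim that the nontrivial action $\rho(e_1)=\left(\begin{smallmatrix}0&1\\0&0\end{smallmatrix}\right)$ admits the two compatible choices $\Gamma(f_1,f_2)=e_1$ (giving $(1|2)_2$) and $\Gamma=0$ (giving $(1|2)_3$). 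That is false: since $\g_0$ is abelian, (J1) applied to $(f_1,f_2)$ gives $0=\Gamma(\rho(e_1)f_1,f_2)+\Gamma(f_1,\rho(e_1)f_2)=\Gamma(f_1,f_1)$, and (J2) applied to $(f_1,f_2,f_2)$ gives $2\rho(\Gamma(f_1,f_2))f_2=0$, while $(f_2,f_2,f_2)$ gives $3\rho(\Gamma(f_2,f_2))f_2=0$; together these force $\Gamma\equiv0$, so the Jordan-block action yields only $(1|2)_3$. In particular a superalgebra with both $\[e_1,f_2\]=f_1$ and $\[f_1,f_2\]=e_1$ is not a Lie superalgebra, and indeed $(1|2)_2$ as listed has no odd action at all.

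The algebra $(1|2)_2$ must instead come from the trivial-action branch, where $\Gamma$ is a single symmetric $2\times2$ matrix classified up to congruence and scalar rescaling by its rank: rank $0$ gives $(1|2)_0$, rank $1$ gives $(1|2)_1$, and rank $2$ gives $(1|2)_2$ (the form $\left(\begin{smallmatrix}0&1\\1&0\end{smallmatrix}\right)$). Your text never performs this rank-$2$ case, because you only discuss the trivial-action $\Gamma$ for ``$n=1$'' (a single scalar), which applies to $(2|1)$ but not to $(1|2)$ where $n=2$. So as written the proposal both fails to produce $(1|2)_2$ where it actually lives and manufactures it in a branch where (J1)--(J2) exclude it; the final list happens to be correct, but the derivation needs this branch repaired. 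The non-isomorphism argument via the invariants of Lemma~\ref{lem:invariants} is fine.
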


\medskip

\begin{longtable}{|c|c|}
%&&\kill
\caption[]{Non-degenerations in dimension 3}
\label{table:non-deg}\\
\hline
$\g\not\to\h$ & Reason\\
\hline
\endfirsthead
\caption[]{(continued)}\\
\hline
$\g\not\to\h$ & Reason\\
\hline
\endhead
$(1|2)_2\not\to(1|2)_3$ & Lemma \ref{lem:invariants} (\ref{inv:der}) $i=1$\\ \hline\hline
\end{longtable}

\medskip

In this case $(1|2)_2\not\to(1|2)_3$ since $\dim\[(1|2)_2,(1|2)_2\]_1=0<1=\dim\[(1|2)_3,(1|2)_3\]_1$, contradicting Lemma \ref{lem:invariants} (\ref{inv:der}).

\medskip

\begin{longtable}{|c|lll|}
%&&\kill
\caption[]{Degenerations in dimension 3}
\label{table:deg}\\
\hline
$\g\to\h$ & \multicolumn{3}{|c|}{Parametrized Basis}\\
\hline
\endfirsthead
\caption[]{(continued)}\\
\hline
$\g\to\h$ & \multicolumn{3}{|c|}{Parametrized Basis}\\
\hline
\endhead
$(1|2)_2\to(1|2)_1$ & $x_1=e_1$, & $y_1=f_1+\frac{1}{2}f_2$, & $y_2=tf_2$. \\ \hline
\end{longtable}

Consider the base change of $(1|2)_2$ given by
$$x_1=e_1,\quad y_1=f_1+\frac{1}{2}f_2\quad\text{and}\quad y_2=tf_2.$$
In terms of this basis, the non-zero barckets are:
$$\[y_1,y_1\]=x_1\quad\text{and}\quad\[y_1,y_2\]=tx_1.$$
By letting $t\to 0$, we obtain the Lie product of $(1|2)_1$. Therefore, $(1|2)_2\to(1|2)_1$.

\medskip

\begin{proposition}\label{prop:rigid3}\ 
\begin{itemize}
\item The Lie superalgebra $(2|1)_1$ is rigid in the variety $\mathcal{N}_{(2|1)}$.
\item The Lie superalgebra $(1|2)_3$ is rigid in the variety $\mathcal{N}_{(1|2)}$.
\item The Lie superalgebra $(1|2)_2$ is rigid in the variety $\mathcal{LS}_{(1|2)}$.
\end{itemize}
\end{proposition}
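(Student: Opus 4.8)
The plan is to establish each of the three rigidity statements by exhibiting the vanishing (or near-vanishing) of the relevant second cohomology group, exactly as was done in Proposition~\ref{prop:rigid2}. Recall that if $(H^2(\g,\g))_0=0$ then $\g$ is rigid in $\LS$ and hence \emph{a fortiori} rigid in any subvariety of $\LS$ containing its orbit; in particular rigidity in $\LS_{(1|2)}$ implies rigidity in $\NLS_{(1|2)}$ whenever the orbit lies in the nilpotent locus. So the three bullets split naturally into two kinds of claim: the last bullet is a genuine $\LS$-rigidity statement, while the first two are \emph{relative} rigidity statements inside $\NLS$, which are weaker and will follow either from the $\LS$-computation or from the observation that the obstructed directions point out of $\NLS$.

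First I would compute $(H^2((1|2)_2,(1|2)_2))_0$ directly. Here $(1|2)_2$ has $\g_0=\Span\{e_1\}$ abelian, $\g_1=\Span\{f_1,f_2\}$ with trivial $\g_0$-action, and $\Gamma(f_1,f_2)=e_1$. The even $2$-cochains decompose into the pieces $\Lambda^2\g_0^*\otimes\g_0$, $(\g_0^*\otimes\g_1^*)\otimes\g_1$, and $\Sym^2\g_1^*\otimes\g_0$; since $\dim\g_0=1$ the first piece vanishes and the computation is small. I would write down the Chevalley--Eilenberg differential restricted to degree~$0$, identify the cocycle space and the coboundary space (the latter coming from $(C^1(\g,\g))_1$ acting), and check the quotient is zero. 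I expect the dimension count to work out to $\dim Z^2 = \dim B^2$, giving rigidity in $\LS_{(1|2)}$, and then rigidity in $\NLS_{(1|2)}$ and in $\mathcal{N}_{(2|1)}$ (for the analogous superalgebra $(2|1)_1$, which is essentially the same structure with one fewer odd variable not entering $\Gamma$) follows immediately. For $(2|1)_1$ the cohomology might not vanish in $\LS$, but any surviving class should be killed once we restrict to the nilpotent subvariety — I would argue that the corresponding infinitesimal deformation makes $\g_0$ non-nilpotent or the action non-nilpotent, hence leaves $\NLS_{(2|1)}$.

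For $(1|2)_3$, which has $\g_0=\Span\{e_1\}$, $\g_1=\Span\{f_1,f_2\}$, and the nontrivial action $\rho(e_1)f_2=f_1$ (with $\Gamma=0$), I would run the same cohomology computation. This is the $\Gamma\equiv 0$ case, so only the module-deformation piece $(\g_0^*\otimes\g_1^*)\otimes\g_1$ and the $\Sym^2\g_1^*\otimes\g_0$ piece contribute. Deformations turning on a nonzero $\Gamma$ are the ones to watch: a generic such deformation would move the orbit toward $(1|2)_1$ or $(1|2)_2$, but since $(1|2)_3$ is already claimed maximal in $\NLS_{(1|2)}$ in the Hasse diagram, I expect these directions to be obstructed at second order, or else to be coboundaries. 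Concretely I would show $(H^2((1|2)_3,(1|2)_3))_0=0$ if possible, and otherwise exhibit that every nonzero class is integrable only to a degeneration \emph{into} (not out of) the orbit, contradicting openness of a competitor — but the clean route is the vanishing computation.

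The main obstacle I anticipate is not any single computation — each is a finite-dimensional linear algebra problem over $\C$ in at most a handful of variables — but rather the bookkeeping of signs and gradings in the super Chevalley--Eilenberg differential, together with correctly isolating the \emph{degree-zero} part of $H^2$ (deformations must be even to respect the $\Z_2$-grading, so odd cochains enter only through $B^2$, not $Z^2$). A secondary subtlety is the distinction between rigidity in $\LS$ and rigidity in $\NLS$: for the first two bullets one must be careful that $O(\g)$ actually meets an open set \emph{of $\NLS$}, which requires checking that the obstructed cohomology directions (if any) are precisely those that exit the nilpotent locus — equivalently, that the tangent space to $\NLS$ at $\g$ is spanned by integrable even cocycles modulo coboundaries. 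I would verify this last point using the invariants of Lemma~\ref{lem:invariants}, in particular that no superalgebra in $\NLS_{(2|1)}$ or $\NLS_{(1|2)}$ of strictly larger orbit dimension degenerates to the given one.
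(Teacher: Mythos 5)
Your plan is essentially the paper's proof: compute $(H^2(\g,\g))_0$ in each case, conclude rigidity in $\mathcal{LS}_{(1|2)}$ from the vanishing for $(1|2)_2$, and for the other two handle any surviving cohomology classes by showing the corresponding deformations exit the nilpotent locus. The only place your expectations diverge from the actual computation is $(1|2)_3$, where $(H^2)_0$ is in fact $2$-dimensional (spanned by $e_1^*\wedge f_1^*\otimes f_1$ and $e_1^*\wedge f_1^*\otimes f_2$), so the argument that carries the day there is not the vanishing but precisely the ``deformation is non-nilpotent'' check you describe for $(2|1)_1$ — the deformed product $\Phi$ with $\Phi(e_1,f_2)=f_1$, $\Phi(e_1,f_1)=tf_2$ has $\g_t^k=\operatorname{Span}\{f_1,f_2\}$ for all $k\geq 1$ when $t\neq 0$ — rather than your alternative phrasing about classes ``integrable only to a degeneration into the orbit,'' which is not quite the right criterion.
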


\begin{proof}
\begin{align*}
(H^2((2|1)_1,(2|1)_1))_0 = &\Span\{ -2e_1^*\wedge e_2^*\otimes e_1+e_2^*\wedge f_1^*\otimes f_1\},\\ 
(H^2((1|2)_3,(1|2)_3))_0 = &\Span\{ e_1^*\wedge f_1^*\otimes  f_1,e_1^*\wedge f_1^*\otimes f_2\},\\ 
(H^2((1|2)_2,(1|2)_2))_0 = &\,0.
\end{align*}
Notice that for $(2|1)_1$, every possible deformation provides a non-nilpotent Lie superalgebra. For $(1|2)_3$ the only cocycle that could give a nilpotent deformation is $e_1^*\wedge f_1^*\otimes f_2$, but in this case we obtain that the product of the deformed Lie superalgebra $\g_t$ is given by
$$\[e_1,f_2\]_t=f_1,\qquad\[e_1,f_1\]_t=tf_2.$$
If $t\neq 0$, then $(\g_t)^k=(\g_t)^1=\Span\{ f_1,f_2\}$ for every $k>1$, and therefore $\g_t$ is not nilpotent.
\end{proof}

\medskip

\begin{theorem}\ 
\begin{enumerate}
\item The irreducible component of the variety $\mathcal{N}_{(3|0)}$ is $\mathcal{C}_1=\overline{O((3|0)_1)}$.
\item The irreducible component of the variety $\mathcal{N}_{(2|1)}$ is $\mathcal{C}_1=\overline{O((2|1)_1)}$.
\item The irreducible components of the variety $\mathcal{N}_{(1|2)}$ are:
\begin{enumerate}
\item $\mathcal{C}_1=\overline{O((1|2)_2)}$.
\item $\mathcal{C}_2=\overline{O((1|2)_3)}$.
\end{enumerate}
\item The irreducible component of the variety $\mathcal{N}_{(0|3)}$ is $\mathcal{C}_1=\overline{O((0|3)_0)}$.
\end{enumerate}
\end{theorem}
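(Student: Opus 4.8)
The plan is to establish, for each of the four varieties, both containment directions: first that the proposed closure actually \emph{is} an irreducible component (i.e.\ the orbit of the claimed generic superalgebra is not contained in the closure of any other orbit), and second that it \emph{exhausts} the variety (i.e.\ every other nilpotent Lie superalgebra in that dimension degenerates to, or rather \emph{is a degeneration of}, the claimed one). Since each $\mathcal N_{(m|n)}$ with $m+n=3$ is a finite union of finitely many orbit closures, it suffices to produce the full Hasse diagram of degenerations among the nine superalgebras listed in the classification theorem and then read off the maximal elements.

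First I would handle the ``easy'' varieties $\mathcal N_{(3|0)}$, $\mathcal N_{(2|1)}$, $\mathcal N_{(0|3)}$. For $\mathcal N_{(0|3)}$ the only superalgebra is $(0|3)_0$ itself, so the statement is immediate. For $\mathcal N_{(3|0)}$, this reduces to the classical fact that the Heisenberg Lie algebra $(3|0)_1$ degenerates to the abelian $(3|0)_0$ (via $g_t = \diag(t,t,t^2)$ in the sense of Lemma~\ref{def-equiv-defor}, or simply since $(3|0)_0 = \mathfrak a$ and every algebra degenerates to $\mathfrak a$), together with the observation that $(3|0)_1$ is not itself a proper degeneration of anything (it is the unique $2$-dimensional orbit). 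For $\mathcal N_{(2|1)}$, rigidity of $(2|1)_1$ is already recorded in Proposition~\ref{prop:rigid3}, and since $(H^2((2|1)_1,(2|1)_1))_0$ shows the orbit is open, $\overline{O((2|1)_1)}$ is an irreducible component; one then checks $(2|1)_1 \to (2|1)_0$ (again $(2|1)_0 = \mathfrak a$) so the two orbits exhaust the variety, forcing $\overline{O((2|1)_1)}$ to be the \emph{only} component.

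The substantive case is $\mathcal N_{(1|2)}$, which has five superalgebras: $(1|2)_0, (1|2)_1, (1|2)_2, (1|2)_3$. Here I would argue as follows. By Proposition~\ref{prop:rigid3}, both $(1|2)_3$ and $(1|2)_2$ are rigid in the relevant variety: $(1|2)_2$ is rigid in $\mathcal{LS}_{(1|2)}$ hence a fortiori its orbit is open in the subvariety $\mathcal N_{(1|2)}$, and $(1|2)_3$ is rigid in $\mathcal N_{(1|2)}$ directly. Thus $\mathcal C_1 = \overline{O((1|2)_2)}$ and $\mathcal C_2 = \overline{O((1|2)_3)}$ are each irreducible components (using the remark after Definition~\ref{def:rigid}). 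It remains to show these are the only components, i.e.\ that $\mathcal N_{(1|2)} = \mathcal C_1 \cup \mathcal C_2$, equivalently that $(1|2)_0$ and $(1|2)_1$ both lie in $\mathcal C_1 \cup \mathcal C_2$. For $(1|2)_1$ this is exactly the degeneration $(1|2)_2 \to (1|2)_1$ exhibited in Table~\ref{table:deg} (the base change $x_1=e_1$, $y_1 = f_1 + \tfrac12 f_2$, $y_2 = tf_2$). For $(1|2)_0 = \mathfrak a$, it is a degeneration of everything, in particular of $(1|2)_2$. Finally, one must confirm $\mathcal C_1 \neq \mathcal C_2$, i.e.\ neither of $(1|2)_2$, $(1|2)_3$ degenerates to the other: the relation $(1|2)_2 \not\to (1|2)_3$ is Table~\ref{table:non-deg} (via Lemma~\ref{lem:invariants}(\ref{inv:der}), comparing $\dim[\cdot,\cdot]_1$), and $(1|2)_3 \not\to (1|2)_2$ follows from Lemma~\ref{lem:invariants}(\ref{inv:gamma}) since $\Gamma((1|2)_3) \equiv 0$ but $\Gamma((1|2)_2) \not\equiv 0$ (and also from the dimension count, as both orbits are $1$-dimensional so neither can be a proper degeneration of the other).

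I expect the main obstacle to be purely bookkeeping rather than conceptual: one must be sure that the Hasse diagram is \emph{complete}, i.e.\ that no degeneration between the listed superalgebras has been overlooked, since a missing edge could in principle collapse two claimed components into one or reveal that a claimed component is not maximal. This is controlled by the rigidity computations (an open orbit cannot be a proper degeneration of anything) together with the numerical invariants of Lemma~\ref{lem:invariants}, which I would tabulate once and for all — orbit dimension, $\dim \z(\g)_i$, $\dim[\g,\g]_i$, whether $\Gamma \equiv 0$, and $\t(\g)$ — to rule out all remaining candidate edges. Once the diagram is pinned down, reading off the maximal orbit closures gives the four statements verbatim.
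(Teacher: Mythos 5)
Your proposal is correct and follows essentially the same route as the paper: the paper establishes this theorem by assembling the classification, the degeneration $(1|2)_2\to(1|2)_1$, the non-degeneration $(1|2)_2\not\to(1|2)_3$, and the rigidity statements of Proposition \ref{prop:rigid3}, and then reads off the maximal orbit closures from the Hasse diagram, exactly as you describe. One minor slip: in your parenthetical alternative argument for $(1|2)_3\not\to(1|2)_2$ you assert both orbits are $1$-dimensional, whereas in fact $\dim O((1|2)_2)=3$ and $\dim O((1|2)_3)=2$; this does not affect the proof, since your primary argument via Lemma \ref{lem:invariants}(\ref{inv:gamma}) ($\Gamma((1|2)_3)\equiv 0$ while $\Gamma((1|2)_2)\not\equiv 0$) already excludes that degeneration, and the correct dimension count would do so as well.
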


\medskip

\scriptsize

\begin{center}

\begin{tikzpicture}[->,>=stealth',shorten >=0.08cm,auto,node distance=1.25cm,
                    thick,main node/.style={rectangle,draw,fill=gray!12,rounded corners=1.5ex,font=\sffamily \bf \bfseries },
                    purple node/.style={rectangle,draw, color=purple,fill=gray!12,rounded corners=1.5ex,font=\sffamily \bf \bfseries },
                    orange node/.style={rectangle,draw, color=orange,fill=gray!12,rounded corners=1.5ex,font=\sffamily \bf \bfseries },
                    green node/.style={rectangle,draw, color=green,fill=gray!12,rounded corners=1.5ex,font=\sffamily \bf \bfseries },
                    olive node/.style={rectangle,draw, color=olive,fill=gray!12,rounded corners=1.5ex,font=\sffamily \bf \bfseries },
                    connecting node/.style={circle, draw, color=purple },
                    rigid node/.style={rectangle,draw,fill=black!20,rounded corners=1.5ex,font=\sffamily \tiny \bfseries },style={draw,font=\sffamily \scriptsize \bfseries }]
                    
\node (41)   {$\dim O(\g)$};

\node (31) [below          of=41]      {$3$};
\node (21) [below          of=31]      {$2$};
\node (11) [below          of=21]      {$1$};
\node (01) [below          of=11]      {$0$};

\node (32) [right          of=31]      {};
\node (33) [right          of=32]      {};
\node (34) [right          of=33]      {};
\node (35) [right          of=34]      {};
\node (22) [right          of=21]      {};
\node (23) [right          of=22]      {};
\node (24) [right          of=23]      {};
\node (25) [right          of=24]      {};
\node (12) [right          of=11]      {};
\node (13) [right          of=12]      {};
\node (14) [right          of=13]      {};
\node (15) [right          of=14]      {};
\node (16) [right          of=15]      {};
\node (02) [right          of=01]      {};
\node (03) [right          of=02]      {};
\node (04) [right          of=03]      {};
\node (05) [right          of=04]      {};
\node (06) [right          of=05]      {};

\node [main node] (301)  [right of =31]                      {$(3|0)_1$ };
\node [main node] (211)  [right of =22]                      {$(2|1)_1$ };\node [main node] (122)  [right of =34]                      {$(1|2)_2$ };
\node [main node] (121)  [right of =23]                      {$(1|2)_1$ };
\node [main node] (123)  [right of =25]                      {$(1|2)_3$ };
\node [main node] (300)  [right of =01]                      {$(3|0)_0$ };
\node [main node] (210)  [right of =02]                      {$(2|1)_0$ };
\node [main node] (120)  [right of =04]                      {$(1|2)_0$ };
\node [main node] (030)  [right of =06]                      {$(0|3)_0$ };

\path[every node/.style={font=\sffamily\small}]

(301)  edge [bend right=0, color=black] node{}  (300)
(211)  edge [bend right=0, color=black] node{}  (210)
(122)  edge [bend right=0, color=black] node{}  (121)
(121)  edge [bend right=0, color=black] node{}  (120)
(123)  edge [bend right=0, color=black] node{}  (120);

\end{tikzpicture}

\end{center}

\normalsize

\subsection{Dimension 4}

\begin{theorem}
Nilpotent Lie superalgebras of dimension 4 are, up to isomorphism:
\begin{table}[h]
\begin{centering}
\begin{tabular}{lllll}
$(4|0)_0$: & & $\[\cdot,\cdot\]=0.$\\
$(4|0)_1$: & & $\[e_1,e_2\]=e_3.$\\
$(4|0)_2$: & & $\[e_1,e_2\]=e_3$, & & $\[e_1,e_3\]=e_4.$\\
$(3|1)_0$: & & $\[\cdot,\cdot\]=0.$\\
$(3|1)_1$: & & $\[f_1,f_1\]=e_1.$\\
$(3|1)_2$: & & $\[e_1,e_2\]=e_3.$\\
$(3|1)_3$: & & $\[e_1,e_2\]=e_3$, & & $\[f_1,f_1\]=e_3.$\\
$(2|2)_0$: & & $\[\cdot,\cdot\]=0.$\\
$(2|2)_1$: & & $\[f_1,f_1\]=e_1$, & & $\[f_2,f_2\]=e_2.$\\
$(2|2)_2$: & & $\[f_1,f_1\]=e_1$, & & $\[f_2,f_2\]=e_1.$\\
$(2|2)_3$: & & $\[f_1,f_1\]=e_1.$\\
$(2|2)_4$: & & $\[f_1,f_2\]=e_1$, & & $\[f_2,f_2\]=e_2.$\\
$(2|2)_5$: & & $\[e_2,f_2\]=f_1.$\\
$(2|2)_6$: & & $\[e_2,f_2\]=f_1$, & & $\[f_2,f_2\]=e_1.$\\
$(1|3)_0$: & & $\[\cdot,\cdot\]=0.$\\
$(1|3)_1$: & & $\[e_1,f_2\]=f_1$, & & $\[e_1,f_3\]=f_2.$\\
$(1|3)_2$: & & $\[e_1,f_2\]=f_1.$\\
$(1|3)_3$: & & $\[f_1,f_1\]=e_1.$\\
$(1|3)_4$: & & $\[f_1,f_2\]=e_1.$\\
$(1|3)_5$: & & $\[f_1,f_1\]=e_1$, & & $\[f_2,f_3\]=e_1.$\\
$(0|4)_0$: & & $\[\cdot,\cdot\]=0.$\\
\end{tabular}
\end{centering}
\end{table}
\end{theorem}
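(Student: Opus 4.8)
The statement to prove is the classification of nilpotent Lie superalgebras of dimension 4. The plan follows the three-step technique outlined in Section 4 of the excerpt, applied systematically to each splitting $(m|n)$ with $m+n=4$.

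\medskip

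The plan is to proceed splitting by splitting through the values $(4|0), (3|1), (2|2), (1|3), (0|4)$. First I would handle the two extreme cases: $(0|4)_0$ is forced to be abelian since a Lie superalgebra with $\g_0 = 0$ has $[\g_1,\g_1]\subset\g_0 = 0$, giving the single abelian algebra; and for $(4|0)$ the bracket restricts to an ordinary Lie bracket on $\g_0$, so the classification reduces to the well-known list of nilpotent Lie algebras of dimension $4$, namely the abelian one, the Heisenberg-type $(4|0)_1$, and the filiform $(4|0)_2$. For the mixed cases $(3|1), (2|2), (1|3)$ I would invoke the observation recorded after \eqref{action}: $\g$ is nilpotent iff $\g_0$ is a nilpotent Lie algebra and $\g_0$ acts on $\g_1$ by nilpotent endomorphisms. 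So Step 1 is: enumerate the nilpotent Lie algebras $\g_0$ of dimension $m$, and for each, classify (up to the relevant group action) the nilpotent $\g_0$-module structures $\rho$ on an $n$-dimensional $\g_1$. Step 2 is: for each pair $(\g_0,\rho)$, solve the linear-algebraic constraints (J1) and (J2) for the symmetric map $\Gamma:\g_1\times\g_1\to\g_0$. Step 3 is: reduce the resulting families modulo the residual stabilizer inside $\GL_m(\C)\oplus\GL_n(\C)$ to obtain canonical representatives, checking carefully that no two representatives on the list are isomorphic.

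\medskip

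Concretely: for $(3|1)$, since $n=1$, every $\rho(x)$ is a $1\times 1$ nilpotent matrix, hence zero, so $\g_0$ acts trivially; $\Gamma$ is then a symmetric bilinear form $\g_1\times\g_1\to\g_0$ which, with $\dim\g_1=1$, is determined by a single vector $\Gamma(f_1,f_1)\in\g_0$, and (J1), (J2) hold automatically (J2 is vacuous by skew considerations, J1 reduces to $[\g_0,\Gamma(f_1,f_1)]=0$, i.e. $\Gamma(f_1,f_1)\in\z(\g_0)$). Running over $\g_0\in\{$abelian$^3$, Heisenberg $\mathfrak{h}_3\}$ and the two choices ($\Gamma\equiv 0$ or $\Gamma\not\equiv 0$ up to scaling into the center) yields exactly the four algebras $(3|1)_0,\dots,(3|1)_3$. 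For $(1|3)$ the roles reverse: $\g_0$ is $1$-dimensional abelian, $\rho(e_1)$ is a single nilpotent $3\times 3$ matrix classified by Jordan type (three conjugacy classes: $0$, one Jordan block of size $2$, one of size $3$ — a size-$3$ block would force non-nilpotency issues with $\Gamma$ but is itself fine as a module), and $\Gamma$ is a symmetric map into the line $\g_0$ satisfying (J1): $\Gamma(\rho(e_1)u,v)+\Gamma(u,\rho(e_1)v)=0$, and (J2): $\rho(\Gamma(u,v))w+\cdots=0$. These become explicit linear equations on the single symmetric matrix $\Gamma^1$; solving them for each Jordan type and quotienting by the stabilizer gives $(1|3)_0,\dots,(1|3)_5$. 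The $(2|2)$ case is the richest: $\g_0$ is $2$-dimensional abelian (the only $2$-dim nilpotent Lie algebra), $\rho$ is a pair of commuting nilpotent $2\times 2$ matrices (classified up to simultaneous conjugation and base change of $\g_0$), and then $\Gamma=(\Gamma^1,\Gamma^2)$ is a pair of symmetric $2\times 2$ matrices constrained by (J1), (J2); the wild-problem comment in the excerpt applies here, so one organizes the casework by the number of simultaneously diagonalizable matrices among $\{\Gamma^1,\Gamma^2\}$ as suggested, producing $(2|2)_0,\dots,(2|2)_6$.

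\medskip

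The main obstacle is the $(2|2)$ classification, and within it the bookkeeping of isomorphisms: one must ensure exhaustiveness (no module/$\Gamma$ pair missed) and non-redundancy (the seven listed algebras are pairwise non-isomorphic). Non-isomorphism can be certified using the invariants of Lemma \ref{lem:invariants} — dimensions of $\z(\g)_i$, of $[\g,\g]_i$, whether $\Gamma\equiv 0$, the value of $\t(\g)$, and if needed dimensions of spaces of $(\alpha,\beta,\gamma)$-derivations — so the final step is to tabulate these invariants for all candidates and confirm they separate the orbits. A secondary subtlety, already flagged in the paper's introduction regarding \cite{MF}, is to verify that parametric-looking families collapse to finitely many representatives after applying the residual $\GL$-action; I would treat any apparent parameter by exhibiting an explicit base change eliminating or normalizing it, exactly as in the degeneration tables. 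Once all splittings are exhausted and the invariant table confirms distinctness, the list of $22$ algebras is the complete classification.
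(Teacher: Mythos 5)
Your plan follows exactly the three-step method the paper announces in Section 3 (classify the nilpotent $\g_0$-modules, solve (J1)--(J2) for the symmetric map $\Gamma$, then reduce by the residual action of $\GL_m(\C)\oplus\GL_n(\C)$); the paper states this theorem without giving a proof, and your case-by-case outline over the splittings $(4|0),(3|1),(2|2),(1|3),(0|4)$ is the intended argument and is correct. One small imprecision: in the $(1|3)$ case with $\rho(e_1)$ a Jordan block of size $2$ or $3$, it is the identities (J1)--(J2) themselves (not any failure of nilpotency) that force $\Gamma=0$, which is why no algebra with both $\rho\neq 0$ and $\Gamma\neq 0$ appears in that splitting, in contrast to $(2|2)_6$.
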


\medskip

\begin{longtable}{|c|c|}
%&&\kill
\caption[]{Non-degenerations in dimension 4}
\label{table:non-deg}\\
\hline
$\g\not\to\h$ & Reason\\
\hline
\endfirsthead
\caption[]{(continued)}\\
\hline
$\g\not\to\h$ & Reason\\
\hline
\endhead
$(2|2)_1\not\to(2|2)_5,(2|2)_6;\ (2|2)_4\not\to(2|2)_5,(2|2)_6$ & Lemma \ref{lem:invariants} (\ref{inv:f})\\ \hline\hline
$(1|3)_1\not\to(1|3)_3,(1|3)_4,(1|3)_5;\ (1|3)_2\not\to(1|3)_3,(1|3)_4,(1|3)_5$ & Lemma \ref{lem:invariants} (\ref{inv:gamma})\\ \hline
\end{longtable}

\medskip

\scriptsize{
\begin{longtable}{|c|llll|}
%&&\kill
\caption[]{Degenerations dimension 4}
\label{table:deg}\\
\hline
$\g\to\h$ & \multicolumn{4}{|c|}{Parametrized Basis}\\
\hline
\endfirsthead
\caption[]{(continued)}\\
\hline
$\g\to\h$ & \multicolumn{4}{|c|}{Parametrized Basis}\\
\hline
\endhead
$(3|1)_3\to(3|1)_2$ & $x_1=e_1$, & $x_2=e_2$, & $x_3=e_3$, & $y_1=tf_1$. \\ \hline
$(3|1)_3\to(3|1)_1$ & $x_1=te_1$, & $x_2=e_2$, & $x_3=e_3$, & $y_1=f_1$. \\ \hline\hline
$(2|2)_1\to(2|2)_4$ & $x_1=t^{1/2}(-e_1+e_2)$, & $x_2=e_1+e_2$, & $y_1=t^{1/2}(-f_1+f_2)$, & $y_2=f_1+f_2$. \\ \hline
$(2|2)_4\to(2|2)_2$ & $x_1=e_1$, & $x_2=e_2$, & $y_1=-\frac{it^{-1}}{2}f_1+itf_2$, & $y_2=\frac{t^{-1}}{2}f_1+tf_2$. \\ \hline
$(2|2)_2\to(2|2)_3$ & $x_1=e_1$, & $x_2=e_2$, & $y_1=f_1$, & $y_2=tf_2$. \\ \hline
$(2|2)_6\to(2|2)_3$ & $x_1=e_1$, & $x_2=te_2$, & $y_1=f_2$, & $y_2=f_1$. \\ \hline
$(2|2)_6\to(2|2)_5$ & $x_1=t^{-1}e_1$, & $x_2=e_2$, & $y_1=f_1$, & $y_2=f_2$. \\ \hline\hline
$(1|3)_1\to(1|3)_2$ & $x_1=e_1$, & $y_1=f_1$, & $y_2=f_2$, & $y_3=tf_3$. \\ \hline
$(1|3)_5\to(1|3)_3$ & $x_1=e_1$, & $y_1=f_1$, & $y_2=tf_2$, & $y_3=f_3$. \\ \hline
$(1|3)_5\to(1|3)_4$ & $x_1=e_1$, & $y_1=f_3$, & $y_2=f_2$, & $y_3=tf_1$. \\ \hline
\end{longtable}
}

\normalsize
\medskip

\begin{proposition}\label{prop:rigid4}\ 
\begin{itemize}
\item The Lie superalgebra $(3|1)_3$ is rigid in the variety $\mathcal{N}_{(3|1)}$.
\item The Lie superalgebra $(2|2)_6$ is rigid in the variety $\mathcal{N}_{(2|2)}$.
\item The Lie superalgebra $(1|3)_1$ is rigid in the variety $\mathcal{N}_{(1|3)}$.
\item The Lie superalgebra $(2|2)_1$ is rigid in the variety $\mathcal{LS}_{(2|2)}$.
\item The Lie superalgebra $(1|3)_5$ is rigid in the variety $\mathcal{LS}_{(1|3)}$.
\end{itemize}
\end{proposition}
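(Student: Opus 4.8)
The plan is to establish rigidity of each of the five listed superalgebras by the same method used in Propositions \ref{prop:rigid2} and \ref{prop:rigid3}: compute the even part $(H^2(\g,\g))_0$ of the second cohomology group, which parametrizes the infinitesimal deformations, and then argue that no infinitesimal deformation integrates to a nontrivial deformation \emph{inside the relevant variety}. Recall that for the nilpotent varieties $\mathcal{N}_{(m|n)}$ the ambient constraint is nilpotency, while for the full varieties $\mathcal{LS}_{(m|n)}$ there is no such constraint; this distinction is exactly why $(2|2)_1$ and $(1|3)_5$ appear for $\mathcal{LS}$ rather than $\mathcal{N}$. So the proof splits into a cohomology computation followed by a case analysis of surviving cocycles.

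First I would set up the cochain complex for each $\g$. Writing $\g$ with its fixed homogeneous basis, a $2$-cochain in $(C^2(\g,\g))_0$ is an even bilinear map $\g\times\g\to\g$ compatible with the $\Z_2$-grading and super skew-symmetry; concretely it is spanned by elements of the form $e_i^*\wedge e_j^*\otimes e_k$, $e_i^*\wedge f_j^*\otimes f_k$, and $f_i^*\odot f_j^*\otimes e_k$ (symmetric in the odd-odd slot). One computes the Chevalley--Eilenberg coboundary $d^1\colon C^1\to C^2$ and $d^2\colon C^2\to C^3$ for the super bracket, takes $\ker d^2/\operatorname{im} d^1$, and extracts the result. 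For the very small algebras at hand (dimension $4$, with typically one or two nonzero bracket relations) this is a finite linear-algebra problem; I expect the answers to be short, e.g.\ $(H^2((2|2)_6,(2|2)_6))_0$ and $(H^2((1|3)_1,(1|3)_1))_0$ spanned by a handful of explicit cocycles, and I would display them exactly as in the proof of Proposition \ref{prop:rigid3}. If any of these cohomology groups vanishes outright, rigidity in the ambient variety is immediate and no further argument is needed for that case.

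When the cohomology is nonzero, the key step is to show each nontrivial class fails to give a genuine curve of pairwise non-isomorphic structures within the variety. Two standard tools apply. (a) For the nilpotent varieties, if a cocycle $\psi$ deforms $\g$ to $\g_t$ with bracket $[\cdot,\cdot]+t\psi+\cdots$ and one checks that $\g_t$ is \emph{not nilpotent} for $t\neq 0$ (by exhibiting that the lower central series stabilizes, as was done for $(2|1)_1$ and $(1|2)_3$), then $\psi$ does not move $\g$ inside $\mathcal{N}_{(m|n)}$; typically this happens because the deformation makes some $\rho(x)$ act with a nonzero eigenvalue. (b) Otherwise one shows the formal deformation is trivial up to isomorphism — e.g.\ the class is actually a coboundary after all, or the quadratic obstruction (the Nijenhuis--Richardson bracket $[\psi,\psi]$) is a nonzero element of $(H^3(\g,\g))_0$, so the deformation does not integrate, or one produces an explicit $t$-dependent change of basis trivializing it. I would go algebra by algebra: $(3|1)_3$ and $(2|2)_6$ and $(1|3)_1$ in their nilpotent varieties, and $(2|2)_1$ and $(1|3)_5$ in the full $\mathcal{LS}$ varieties, applying whichever of (a) or (b) is relevant to each surviving cocycle.

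The main obstacle is the honest computation of $(H^2(\g,\g))_0$ for the two rigid-in-$\mathcal{LS}$ cases, $(2|2)_1$ and $(1|3)_5$: here nilpotency gives no extra leverage, so the cohomology must genuinely vanish (or every class must be killed by an obstruction or a coboundary), and these are the largest computations — $(2|2)_1$ has a two-dimensional $\Gamma$ with $[f_1,f_1]=e_1$, $[f_2,f_2]=e_2$, giving a moderately sized coboundary matrix to invert. A secondary subtlety is bookkeeping the signs $(-1)^{|x||y|}$ correctly throughout the super Chevalley--Eilenberg differential, since an error there would corrupt the whole count; I would double-check by verifying that the dimension formula $\dim O(\g)=\dim G-\dim\Stab(\g)$ is consistent with $\dim(C^1/\ker d^1)$ and with the degeneration data in the dimension-$4$ tables. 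Once $(H^2(\g,\g))_0$ is pinned down and the surviving cocycles are dispatched, rigidity — hence, via the remark following Definition \ref{def:rigid}, the fact that $\overline{O(\g)}$ is an irreducible component — follows in each case.
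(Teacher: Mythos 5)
Your proposal follows exactly the paper's method: compute $(H^2(\g,\g))_0$ for each algebra, conclude immediately when it vanishes (which is precisely what happens for the two $\mathcal{LS}$-rigid cases $(2|2)_1$ and $(1|3)_5$), and for the remaining three show that every surviving cocycle deforms $\g$ out of the nilpotent variety — in the paper this is checked by exhibiting that the lower central series of $\g_t$ stabilizes at a nonzero subspace for $t\neq 0$, exactly as you describe in your tool (a). The obstruction-theoretic alternative (b) you mention is never needed in the paper's argument, but otherwise the approach and even the anticipated shape of the answers coincide.
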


\begin{proof}
\begin{align*}
(H^2((3|1)_3,(3|1)_3))_0=&\Span\{ e_1^*\wedge e_2^*\otimes e_1, \; e_1^*\wedge e_2^*\otimes e_2, \;f_1^*\wedge f_1^*\otimes e_1-\frac{1}{2}e_2^*\wedge f_1^*\otimes f_1,\\
&\hspace{0.2cm}f_1^*\wedge f_1^*\otimes e_2+\frac{1}{2}e_1^*\wedge f_1^*\otimes f_1\},\\
(H^2((2|2)_1,(2|2)_1))_0=&0,\\
(H^2((2|2)_6,(2|2)_6))_0=&\Span\{ {e_2^*\wedge f_1^*\otimes f_2-f_1^*\wedge f_1^*\otimes e_1}, \;e_2^*\wedge f_2^*\otimes f_2-2e_1^*\wedge e_2^*\otimes e_1,\\
&\hspace{0.2cm}e_2^*\wedge f_1^*\otimes f_1, \;f_1^*\wedge f_2^*\otimes e_2-2e_1^*\wedge e_2^*\otimes e_2-2e_1^*\wedge f_1^*\otimes f_1\}\\
(H^2((1|3)_1,(1|3)_1))_0=&\Span\{ e_1^*\wedge f_1^*\otimes f_1,  \; e_1^*\wedge f_1^*\otimes f_2, \; e_1^*\wedge f_1^*\otimes f_3,e_1^*\wedge f_3^*\otimes f_3\}\\
(H^2((1|3)_5,(1|3)_5))_0=&0.
\end{align*}
Notice that for $(3|1)_3$ and $(1|3)_1$, every possible deformation provides non-nilpotent Lie superalgebras. For $(2|2)_6$, the only cocycle that could give a nilpotent Lie superalgebra is $e_2^*\wedge f_1^*\otimes f_2-f_1^*\wedge f_1^*\otimes e_1$, but in this case we obtain that the deformed Lie superalgebra $\g_t$ has product given by
$$\[e_2,f_2\]_t=f_1,\qquad\[f_2,f_2\]_t=e_1,\qquad\[e_2,f_1\]_t=tf_2,\qquad\[f_1,f_1\]_t=-te_1.$$
Then one can check that if $t\neq0$, then $(\g_t)^k=(\g_t)^1=\Span\{ e_1,f_1,f_2\}$  for every $k>1$, and thus $\g_t$ is not nilpotent.
\end{proof}

\newpage

\begin{theorem}\ 
\begin{enumerate}
\item The irreducible component of the variety $\mathcal{N}_{(4|0)}$ is $\mathcal{C}_1=\overline{O((4|0)_2)}$.
\item The irreducible component of the variety $\mathcal{N}_{(3|1)}$ is $\mathcal{C}_1=\overline{O((3|1)_3)}$.
\item The irreducible components of the variety $\mathcal{N}_{(2|2)}$ are:
\begin{enumerate}
\item $\mathcal{C}_1=\overline{O((2|2)_1)}$.
\item $\mathcal{C}_2=\overline{O((2|2)_6)}$.
\end{enumerate} 
\item The irreducible components of the variety $\mathcal{N}_{(1|3)}$ are:
\begin{enumerate}
\item $\mathcal{C}_1=\overline{O((1|3)_1)}$.
\item $\mathcal{C}_2=\overline{O((1|3)_5)}$.
\end{enumerate} 
\item The irreducible component of the variety $\mathcal{N}_{(0|4)}$ is $\mathcal{C}_1=\overline{O((0|4)_2)}$.
\end{enumerate}
\end{theorem}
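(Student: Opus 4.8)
The plan is to deduce the irreducible components of each variety $\mathcal{N}_{(m|n)}$ with $m+n=4$ directly from the Hasse diagram of degenerations, using the same bookkeeping strategy exemplified in dimensions $2$ and $3$. First I would recall the general principle: the irreducible components of an affine variety stratified by finitely many $G$-orbits are the closures $\overline{O(\g)}$ of those orbits $O(\g)$ that are \emph{maximal} with respect to the degeneration order, i.e. orbits not contained in the closure of any other orbit, subject to the constraint that a union of such closures must cover the whole variety. So the proof reduces to identifying, in each of the five varieties, exactly which superalgebras are ``tops'' of the degeneration poset and checking that their orbit closures jointly exhaust the variety. Since $\mathcal{N}_{(m|n)}$ always contains the abelian superalgebra $\mathfrak{a}$ to which everything degenerates, and since (by dimension count and the invariants of Lemma~\ref{lem:invariants}) the degeneration relation among the finitely many listed algebras is already fully determined by the degeneration tables and non-degeneration tables above, this is a finite combinatorial check.

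Concretely, I would proceed variety by variety. For $\mathcal{N}_{(4|0)}$ the classification lists only $(4|0)_0,(4|0)_1,(4|0)_2$; the chain $(4|0)_2\to(4|0)_1\to(4|0)_0$ (the last two being the standard Lie-algebra degenerations, which also follow from Lemma~\ref{lem:invariants}(\ref{inv:der}) applied in reverse to rule out the others and from explicit parametrized bases) shows $(4|0)_2$ is the unique maximal element, giving $\mathcal{C}_1=\overline{O((4|0)_2)}$; and since this closure has the same dimension as the ambient irreducible variety (equivalently, $(4|0)_2$ is rigid by the $H^2$ computation in Proposition~\ref{prop:rigid4} for the nilpotent variety, so its orbit is dense), it is all of $\mathcal{N}_{(4|0)}$. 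For $\mathcal{N}_{(3|1)}$ I would invoke the rigidity of $(3|1)_3$ from Proposition~\ref{prop:rigid4} together with the degenerations $(3|1)_3\to(3|1)_2$, $(3|1)_3\to(3|1)_1$ (parametrized bases given above) and $(3|1)_2\to(3|1)_0$, which exhausts the list of five algebras; hence $\overline{O((3|1)_3)}=\mathcal{N}_{(3|1)}$. The case $\mathcal{N}_{(0|4)}$ is trivial: every superalgebra with $m=0$ has zero bracket, so $\mathcal{N}_{(0|4)}=\{(0|4)_0\}$ is a single point — note the statement's reference to ``$(0|4)_2$'' should read $(0|4)_0$, which I would correct.

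The genuinely non-trivial cases are $\mathcal{N}_{(2|2)}$ and $\mathcal{N}_{(1|3)}$, where there are \emph{two} components. For $\mathcal{N}_{(2|2)}$ I would argue as follows. By Proposition~\ref{prop:rigid4}, $(2|2)_6$ is rigid in $\mathcal{N}_{(2|2)}$, so $\mathcal{C}_2:=\overline{O((2|2)_6)}$ is one irreducible component. From the degeneration table, $(2|2)_1\to(2|2)_4\to(2|2)_2\to(2|2)_3$ and $(2|2)_6\to(2|2)_3$, $(2|2)_6\to(2|2)_5$, so the only algebras \emph{not} lying in $\overline{O((2|2)_6)}$ are $(2|2)_1,(2|2)_4,(2|2)_2$; here the non-degeneration table is essential — $(2|2)_1\not\to(2|2)_5,(2|2)_6$ and $(2|2)_4\not\to(2|2)_5,(2|2)_6$ via Lemma~\ref{lem:invariants}(\ref{inv:f}) — which shows $(2|2)_6\notin\overline{O((2|2)_1)}$, so neither orbit closure contains the other and both are needed. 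Then $(2|2)_1$ is the maximal element of the sub-poset $\{(2|2)_1,(2|2)_4,(2|2)_2,(2|2)_3\}$, so $\mathcal{C}_1:=\overline{O((2|2)_1)}$ is the second component, and $\mathcal{C}_1\cup\mathcal{C}_2$ covers all $11$ superalgebras; one finally checks (by comparing $\dim O((2|2)_1)$, resp. $\dim O((2|2)_6)$, with the dimension of the relevant sheet, or by noting irreducibility of each $\overline{O(\g)}$ and that there are no other maximal orbits) that there are no further components. The reasoning for $\mathcal{N}_{(1|3)}$ is parallel: $(1|3)_1$ is rigid, $(1|3)_5$ is maximal (the non-degenerations $(1|3)_1,(1|3)_2\not\to(1|3)_3,(1|3)_4,(1|3)_5$ coming from Lemma~\ref{lem:invariants}(\ref{inv:gamma}) show neither closure contains the other), the degenerations $(1|3)_1\to(1|3)_2$ and $(1|3)_5\to(1|3)_3,(1|3)_4$ organize the remaining algebras, and $\overline{O((1|3)_1)}\cup\overline{O((1|3)_5)}$ is everything.

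The main obstacle is not any single deep argument but the completeness of the case analysis: one must be certain that the degeneration and non-degeneration tables, together with transitivity of $\to$ and the dimension inequality of Lemma~\ref{lem:invariants}(1), pin down the entire Hasse diagram — in particular that no ``hidden'' degeneration exists between two algebras that would collapse a putative component, and conversely that no claimed maximal orbit is actually contained in the closure of another. This is why the rigidity computations (the $(H^2(\g,\g))_0$ calculations in Proposition~\ref{prop:rigid4}) are doing real work: rigidity of $(3|1)_3$, $(2|2)_6$, $(1|3)_1$ in the respective \emph{nilpotent} varieties immediately certifies that their orbit closures are components, and the non-degeneration invariants certify that, where two such rigid/maximal algebras coexist, the components are genuinely distinct. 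I would present the argument as five short paragraphs, each a few lines, citing Propositions~\ref{prop:rigid4}, the degeneration/non-degeneration tables, and Lemma~\ref{lem:invariants}, and flag the typo $(0|4)_2\rightsquigarrow(0|4)_0$.
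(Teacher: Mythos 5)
Your proposal is correct and takes essentially the same approach as the paper, which obtains the components by reading off the maximal orbits from the degeneration and non-degeneration tables (i.e.\ the Hasse diagram) together with the rigidity computations of Proposition~\ref{prop:rigid4}; the paper gives no further written proof of this theorem beyond that machinery. Only trivial slips: $\mathcal{N}_{(3|1)}$ has four listed algebras (not five), $\mathcal{N}_{(2|2)}$ has seven (not eleven), and $(4|0)_2$ is not among the algebras treated in Proposition~\ref{prop:rigid4} --- its maximality follows from the degeneration chain alone --- while your observation that $(0|4)_2$ should read $(0|4)_0$ is correct.
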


\medskip

\scriptsize

\begin{center}

\begin{tikzpicture}[->,>=stealth',shorten >=0.08cm,auto,node distance=1.25cm,
                    thick,main node/.style={rectangle,draw,fill=gray!12,rounded corners=1.5ex,font=\sffamily \bf \bfseries },
                    purple node/.style={rectangle,draw, color=purple,fill=gray!12,rounded corners=1.5ex,font=\sffamily \bf \bfseries },
                    orange node/.style={rectangle,draw, color=orange,fill=gray!12,rounded corners=1.5ex,font=\sffamily \bf \bfseries },
                    green node/.style={rectangle,draw, color=green,fill=gray!12,rounded corners=1.5ex,font=\sffamily \bf \bfseries },
                    olive node/.style={rectangle,draw, color=olive,fill=gray!12,rounded corners=1.5ex,font=\sffamily \bf \bfseries },
                    connecting node/.style={circle, draw, color=purple },
                    rigid node/.style={rectangle,draw,fill=black!20,rounded corners=1.5ex,font=\sffamily \tiny \bfseries },style={draw,font=\sffamily \scriptsize \bfseries }]
                    
\node (91)   {$\dim O(\g)$};

\node (81) [below          of=91]      {$8$};
\node (71) [below          of=81]      {$7$};
\node (61) [below          of=71]      {$6$};
\node (51) [below          of=61]      {$5$};
\node (41) [below          of=51]      {$4$};
\node (31) [below          of=41]      {$3$};
\node (21) [below          of=31]      {$2$};
\node (11) [below          of=21]      {$1$};
\node (01) [below          of=11]      {$0$};

\node (82) [right          of=81]      {};
\node (62) [right          of=61]      {};
\node (63) [right          of=62]      {};
\node (64) [right          of=63]      {};
\node (65) [right          of=64]      {};
\node (66) [right          of=65]      {};
\node (67) [right          of=66]      {};
\node (68) [right          of=67]      {};
\node (52) [right          of=51]      {};
\node (53) [right          of=52]      {};
\node (54) [right          of=53]      {};
\node (55) [right          of=54]      {};
\node (42) [right          of=41]      {};
\node (43) [right          of=42]      {};
\node (44) [right          of=43]      {};
\node (45) [right          of=44]      {};
\node (46) [right          of=45]      {};
\node (32) [right          of=31]      {};
\node (33) [right          of=32]      {};
\node (34) [right          of=33]      {};
\node (35) [right          of=34]      {};
\node (36) [right          of=35]      {};
\node (37) [right          of=36]      {};
\node (38) [right          of=37]      {};
\node (22) [right          of=21]      {};
\node (23) [right          of=22]      {};
\node (24) [right          of=23]      {};
\node (25) [right          of=24]      {};
\node (26) [right          of=25]      {};
\node (27) [right          of=26]      {};
\node (28) [right          of=27]      {};
\node (12) [right          of=11]      {};
\node (13) [right          of=12]      {};
\node (14) [right          of=13]      {};
\node (15) [right          of=14]      {};
\node (16) [right          of=15]      {};
\node (17) [right          of=16]      {};
\node (18) [right          of=17]      {};
\node (19) [right          of=18]      {};
\node (02) [right          of=01]      {};
\node (03) [right          of=02]      {};
\node (04) [right          of=03]      {};
\node (05) [right          of=04]      {};
\node (06) [right          of=05]      {};
\node (07) [right          of=06]      {};
\node (08) [right          of=07]      {};
\node (09) [right          of=08]      {};
\node (010) [right          of=09]      {};
\node (011) [right          of=010]      {};

\node [main node] (402)  [right of =81]                      {$(4|0)_2$ };
\node [main node] (401)  [right of =51]                      {$(4|0)_1$ };
\node [main node] (400)  [right of =01]                      {$(4|0)_0$ };

\node [main node] (313)  [right of =43]                      {$(3|1)_3$ };
\node [main node] (312)  [right of =32]                      {$(3|1)_2$ };
\node [main node] (311)  [right of =33]                      {$(3|1)_1$ };
\node [main node] (310)  [right of =03]                      {$(3|1)_0$ };

\node [main node] (221)  [right of =65]                      {$(2|2)_1$ };
\node [main node] (224)  [right of =55]                      {$(2|2)_4$ };
\node [main node] (222)  [right of =45]                      {$(2|2)_2$ };
\node [main node] (226)  [right of =46]                      {$(2|2)_6$ };
\node [main node] (223)  [right of =35]                      {$(2|2)_3$ };
\node [main node] (225)  [right of =36]                      {$(2|2)_5$ };
\node [main node] (220)  [right of =05]                      {$(2|2)_0$ };

\node [main node] (131)  [right of =68]                      {$(1|3)_1$ };
\node [main node] (132)  [right of =38]                      {$(1|3)_2$ };
\node [main node] (135)  [right of =28]                      {$(1|3)_5$ };
\node [main node] (133)  [right of =18]                      {$(1|3)_3$ };
\node [main node] (134)  [right of =19]                      {$(1|3)_4$ };
\node [main node] (130)  [right of =08]                      {$(1|3)_0$ };

\node [main node] (040)  [right of =011]                      {$(0|4)_0$ };

\path[every node/.style={font=\sffamily\small}]

(402)  edge [bend right=0, color=black] node{}  (401)
(401)  edge [bend right=0, color=black] node{}  (400)

(313)  edge [bend right=0, color=black] node{}  (312)
(313)  edge [bend right=0, color=black] node{}  (311)
(312)  edge [bend right=0, color=black] node{}  (310)
(311)  edge [bend right=0, color=black] node{}  (310)

(221)  edge [bend right=0, color=black] node{}  (224)
(224)  edge [bend right=0, color=black] node{}  (222)
(222)  edge [bend right=0, color=black] node{}  (223)
(226)  edge [bend right=0, color=black] node{}  (223)
(226)  edge [bend right=0, color=black] node{}  (225)
(223)  edge [bend right=0, color=black] node{}  (220)
(225)  edge [bend right=0, color=black] node{}  (220)

(131)  edge [bend right=0, color=black] node{}  (132)
(132)  edge [bend right=40, color=black] node{}  (130)
(135)  edge [bend right=0, color=black] node{}  (133)
(135)  edge [bend right=0, color=black] node{}  (134)
(133)  edge [bend right=0, color=black] node{}  (130)
(134)  edge [bend right=0, color=black] node{}  (130);

\end{tikzpicture}

\end{center}

\normalsize

\medskip

\section{Nilpotent Lie superalgebras of Dimension 5}

We will study nilpotent Lie superalgebras of dimension 5 by separating them in three cases, namely:
\begin{itemize}
\item Case I: nilpotent Lie superalgebras of dimension $(5|0)$, $(4|1)$, $(1|4)$ and $(0|5)$.
\item Case II: nilpotent Lie superalgebras of dimension $(3|2)$.
\item Case III: nilpotent Lie superalgebras of dimension $(2|3)$.
\end{itemize} 

\subsection{Case I}
\begin{theorem}
Nilpotent Lie superalgebras of dimension $(m|n)$ with $m+n=5$ and $|m-n|>1$ are, up to isomorphism:
$$
\begin{array}{lllll}
(5|0)_0: &  \[\cdot, \cdot \]=0.&&& \\ 
(5|0)_1: & \[e_1,e_2\]=e_3. & && \\
(5|0)_2: & \[e_1,e_2\]=e_3, & \[e_1,e_3\]=e_4.&&\\
(5|0)_3: & \[e_1,e_2\]=e_3, & \[e_1,e_3\]=e_4,&\[ e_1, e_4\]=e_5, &\[ e_2, e_3\] =e_5.\\
(5|0)_4: & \[e_1,e_2\]=e_3, & \[e_1,e_3\]=e_4,&\[ e_1, e_4\]=e_5. &\\
(5|0)_5: & \[e_1,e_2\]=e_3, & \[e_1,e_4\]=e_5,&\[ e_2, e_3\] =e_5.&\\
(5|0)_6: & \[e_1,e_2\]=e_3, & \[e_1,e_3\]=e_4,&\[ e_2, e_3\]=e_5. &\\
(5|0)_7: & \[e_1,e_2\]=e_5, & \[e_3,e_4\]=e_5.& &\\
(5|0)_8: & \[e_1,e_2\]=e_4, & \[e_1,e_3\]=e_5.& &\\
%\g_0 abeliana
(4|1)_0: &  \[\cdot, \cdot \]=0.&&& \\ 
(4|1)_1: &  \[f_1, f_1 \]=e_1.&&& \\ 
%g_0 heisemberg + C
(4|1)_2: &  \[e_1, e_2\]=e_3, &&& \\ 
(4|1)_3: &  \[e_1, e_2\]=e_3, &\[f_1, f_1 \] = e_3.&& \\ 
(4|1)_4: &  \[e_1, e_2\]=e_3, &\[f_1, f_1 \] = e_4.&& \\ 
%g_0  no abeliana, no heisem. 
(4|1)_5: &  \[e_1, e_2\]=e_3, & \[ e_1, e_3\]=e_4.  && \\ 
(4|1)_6: &  \[e_1, e_2\]=e_3, & \[ e_1, e_3\]=e_4,  &\[f_1, f_1 \] = e_4.& \\ 
%con parte par abeliana y rho=0
(1|4)_0: & \[\cdot, \cdot \]=0.&&& \\ 
(1|4)_1: & \[f_1, f_1 \]=e_1. & & &  \\   
(1|4)_2: & \[f_1, f_1 \]=e_1,&  \[f_2, f_2 \]=e_1. & & \\    
(1|4)_3: & \[f_1, f_1 \]=e_1,&  \[f_2, f_2 \]=e_1, &  \[f_3, f_3 \]=e_1. &  \\   
(1|4)_4: & \[f_1, f_1 \]=e_1,&  \[f_2, f_2 \]=e_1, &  \[f_3, f_3 \]=e_1, &  \[f_4, f_4 \]=e_1. \\    
(1|4)_5: & \[e_1, f_2 \]=f_1.&&& \\
(1|4)_6: & \[e_1, f_2 \]=f_1, &\[e_1, f_3 \]=f_2.  && \\    
(1|4)_7: & \[e_1, f_2 \]=f_1, &\[e_1, f_3 \]=f_2,  &\[e_1, f_4 \]=f_3. & \\   
(1|4)_8: & \[e_1, f_2 \]=f_1, &\[e_1, f_4 \]=f_3.& & \\   
(0|5)_0: & \[\cdot,\cdot\]=0. & & & \\   
\end{array}
$$

\end{theorem}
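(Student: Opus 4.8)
The plan is to treat the four bidegrees $(5|0)$, $(4|1)$, $(1|4)$, $(0|5)$ separately, following the three-step scheme described above (first classify the nilpotent $\g_0$-modules, then the admissible symmetric maps $\Gamma$, then the orbits). Two of the cases are immediate. For $(0|5)$ one has $\g_0=0$, hence $\rho=0$ and $\Gamma\colon\g_1\times\g_1\to\g_0=0$ vanishes, leaving only the abelian superalgebra $(0|5)_0$. For $(5|0)$ one has $\g_1=0$, and a nilpotent Lie superalgebra of this bidegree is simply a $5$-dimensional nilpotent Lie algebra; the nine classes $(5|0)_0,\dots,(5|0)_8$ are precisely the classical list of complex $5$-dimensional nilpotent Lie algebras, which I would recall rather than reprove.

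For $(4|1)$ the odd part $\g_1$ is one-dimensional, so its only nilpotent endomorphism is $0$; therefore $\rho=0$ and $\g$ is determined by a $4$-dimensional nilpotent Lie algebra $\g_0$ together with the single vector $v:=\Gamma(f_1,f_1)\in\g_0$. Condition (J1) reduces to $[\g_0,v]=0$, i.e.\ $v\in\z(\g_0)$, and (J2) is vacuous; conversely every such pair gives a nilpotent Lie superalgebra, with $v$ central in $\g$ (so $\g^k=(\g_0)^k$ for $k\geq2$). After normalising $\g_0$, the isomorphism classes correspond to the $\Aut(\g_0)$-orbits of $v$, taken up to rescaling. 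Going through the three $4$-dimensional nilpotent Lie algebras — $\C^4$, the Heisenberg algebra plus a line $\n_3\oplus\C$, and the filiform algebra $(4|0)_2$ — these orbits turn out to be detected only by whether $v=0$ and, when $v\neq0$, by whether $v$ lies in $[\g_0,\g_0]$; this produces exactly $(4|1)_0,\dots,(4|1)_6$, and pairwise non-isomorphism follows from the isomorphism type of $\g_0$ together with $\dim\z(\g)_i$.

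The substantial case is $(1|4)$. Here $\g_0=\Span\{e_1\}$ is abelian, $N:=\rho(e_1)$ is a nilpotent endomorphism of the $4$-dimensional space $\g_1$, and $\Gamma$ is encoded by a symmetric bilinear form $\gamma$ on $\g_1$ through $\Gamma(f_i,f_j)=\gamma_{ij}e_1$. The key point — which I expect to be the only genuine difficulty — is that $N\neq0$ forces $\Gamma=0$. Indeed, if $\Gamma\neq0$ then, since $\g_0$ is one-dimensional, there is a $u_0$ with $\Gamma(u_0,u_0)\neq0$ (a symmetric form over $\C$ vanishing on every diagonal pair vanishes identically), and (J2) at $(u_0,u_0,u_0)$ gives $3\,\rho(\Gamma(u_0,u_0))u_0=0$; as $\rho(\Gamma(u_0,u_0))$ is a nonzero scalar multiple of $N$, this forces $Nu_0=0$, whereupon (J2) at $(u_0,u_0,w)$ for arbitrary $w$ collapses to $Nw=0$, i.e.\ $N=0$, a contradiction. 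Hence the admissible data split into two disjoint families. If $N=0$, then (J1) and (J2) hold automatically and $\g$ is classified by the congruence class of $\gamma$ up to scaling — equivalently by $\rk\gamma\in\{0,1,2,3,4\}$ — giving $(1|4)_0,\dots,(1|4)_4$. If $N\neq0$ (hence $\Gamma=0$), then $\g$ is classified by the conjugacy class of $N$ up to scaling — equivalently by its Jordan type, a partition of $4$ other than $(1,1,1,1)$ — giving $(1|4)_5,\dots,(1|4)_8$. Nilpotency is clear in all cases, since the lower central series of $\g$ is carried by the spaces $N^k(\g_1)$ (which vanish for $k\geq4$) and by $\Gamma(\g_1,\g_1)\subset\g_0$ (which dies at step $2$). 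Distinctness, both within and across the two families, follows from the invariants $\dim\z(\g)_i$, $\dim[\g,\g]_i$ and $\dim(\g^2)_i$, the last of these separating the different ranks of $\gamma$ as well as the partitions $(3,1)$ and $(2,2)$.

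To summarise, the only real obstacle is the bookkeeping in the $(1|4)$ case: one must be sure that no nilpotent Lie superalgebra with both $\rho\neq0$ and $\Gamma\neq0$ is missed — this is exactly the short argument above using only (J2) — and that the congruence and conjugacy classifications are exhaustive with the chosen representatives pairwise non-isomorphic, for which the numerical invariants of Lemma~\ref{lem:invariants}, applied to isomorphisms, are enough.
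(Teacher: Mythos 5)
Your proposal is correct, and it follows exactly the three-step scheme the paper announces in Section 3 (nilpotent $\g_0$-modules, then admissible $\Gamma$ via (J1)--(J2), then orbit representatives); the paper itself states this theorem without proof, so your write-up supplies details the authors omit rather than diverging from them. The one genuinely non-routine point — that in bidegree $(1|4)$ the identity (J2) forces $\rho\neq0$ and $\Gamma\neq0$ to be mutually exclusive — is argued correctly via polarization and the evaluations at $(u_0,u_0,u_0)$ and $(u_0,u_0,w)$, and the resulting counts (rank of $\gamma$, Jordan type of $N$, and the $\Aut(\g_0)$-orbits of a central vector in the $(4|1)$ case) match the stated list.
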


\begin{longtable}{|c|c|}
%&&\kill
\caption[]{Non-degenerations Case I}
\label{table:non-deg}\\
\hline
$\g\not\to\h$ & Reason\\
\hline
\endfirsthead
\caption[]{(continued)}\\
\hline
$\g\not\to\h$ & Reason\\
\hline
\endhead
$(4|1)_2\not\to(4|1)_1;\ (4|1)_5\not\to(4|1)_4,(4|1)_3,(4|1)_1$ & Lemma \ref{lem:invariants} (\ref{inv:gamma})\\ \hline\hline
$(1|4)_{5}\not\to(1|4)_1;\ (1|4)_{8}\not\to(1|4)_2,(1|4)_1$ & Lemma \ref{lem:invariants} (\ref{inv:gamma})\\ \hline
$(1|4)_{2}\not\to(1|4)_5;\ (1|4)_{3}\not\to(1|4)_8,(1|4)_5;\ (1|4)_{4}\not\to(1|4)_8,(1|4)_5$ & Lemma \ref{lem:invariants} (\ref{inv:der}) $i=1$\\ \hline
$(1|4)_{6}\not\to(1|4)_3,(1|4)_2,(1|4)_1;\ (1|4)_{7}\not\to(1|4)_4,(1|4)_3,(1|4)_2,(1|4)_1$ & Lemma \ref{lem:invariants} (\ref{inv:der}) $i=0$\\ \hline
\end{longtable}

%\begin{landscape}

\begin{longtable}{|c|lllll|c|}
%&&\kill
\caption[]{Degenerations Case I}
\label{table:deg}\\
\hline
$\g\to\h$ & \multicolumn{5}{|c|}{Parametrized Basis}\\
\hline
\endfirsthead
\caption[]{(continued)}\\
\hline
$\g\to\h$ & \multicolumn{5}{|c|}{Parametrized Basis}\\
\hline
\endhead
$(4|1)_3\to(4|1)_2$ & $x_1=e_1$, & $x_2=e_2$, & $x_3=e_3$, & $x_4=e_4$, & $y_1=tf_1$. \\ \hline
$(4|1)_3\to(4|1)_1$ & $x_1=e_3$, & $x_2=te_2$, & $x_3=e_1$, & $x_4=e_4$, & $y_1=f_1$. \\ \hline
$(4|1)_4\to(4|1)_3$ & $x_1=e_1$, & $x_2=e_2$, & $x_3=e_3$, & $x_4=t^{-1}(e_4-e_3)$, & $y_1=f_1$. \\ \hline
$(4|1)_5\to(4|1)_2$ & $x_1=e_1$, & $x_2=e_2$, & $x_3=e_3$, & $x_4=t^{-1}e_4$, & $y_1=f_1$.\\ \hline
$(4|1)_6\to(4|1)_5$ & $x_1=e_1$, & $x_2=e_2$, & $x_3=e_3$, & $x_4=e_4$, & $y_1=tf_1$. \\ \hline
$(4|1)_6\to(4|1)_4$ & $x_1=te_1$, & $x_2=t^{-1}e_2$, & $x_3=e_3$, & $x_4=e_4$, & $y_1=f_1$. \\ \hline\hline
$(1|4)_{2}\to(1|4)_{1}$ & $x_1=e_1$, & $y_1=f_1$, & $y_2=tf_2$, & $y_3=f_3$, & $y_4=f_4$. \\ \hline
$(1|4)_{8}\to(1|4)_{5}$ & $x_1=e_1$, & $y_1=f_1$, & $y_2=f_2$, & $y_3=f_3$, & $y_4=tf_4$. \\ \hline
$(1|4)_{3}\to(1|4)_{2}$ & $x_1=e_1$, & $y_1=f_1$, & $y_2=f_2$, & $y_3=tf_3$, & $y_4=f_4$. \\ \hline
$(1|4)_{4}\to(1|4)_{3}$ & $x_1=e_1$, & $y_1=f_1$, & $y_2=f_2$, & $y_3=f_3$, & $y_4=tf_4$. \\ \hline
$(1|4)_{6}\to(1|4)_{8}$ & $x_1=e_1$, & $y_1=t^{-1}f_1$, & $y_2=t^{-1}(f_2-f_4)$, & $y_3=f_4$, & $y_4=f_3$. \\ \hline
$(1|4)_{7}\to(1|4)_{6}$ & $x_1=e_1$, & $y_1=f_1$, & $y_2=f_2$, & $y_3=f_3$, & $y_4=tf_4$. \\ \hline
\end{longtable}

%\end{landscape}

\begin{proposition}\label{prop:rigid51}\ 
\begin{itemize}
\item The Lie superalgebra $(4|1)_6$ is rigid in the variety $\mathcal{N}_{(4|1)}$.
\item The Lie superalgebra $(1|4)_7$ is rigid in the variety $\mathcal{N}_{(1|4)}$.
\item The Lie superalgebra $(1|4)_4$ is rigid in the variety $\mathcal{LS}_{(1|4)}$.
\end{itemize}
\end{proposition}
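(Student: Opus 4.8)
The plan is to establish rigidity exactly as in Propositions \ref{prop:rigid2}, \ref{prop:rigid3} and \ref{prop:rigid4}: for each of the three superalgebras we compute the even part of the second cohomology group $(H^2(\g,\g))_0$, which parameterizes the even infinitesimal deformations, and then argue that no genuine deformation inside the relevant variety survives. Recall that $\g$ is rigid in $\LS_{(m|n)}$ as soon as $(H^2(\g,\g))_0=0$, by Definition \ref{def:rigid} and the remark following it; in the nilpotent varieties $\NLS_{(m|n)}$ one only needs that every even cocycle either is a coboundary or deforms $\g$ out of the nilpotent locus. So the proof will have the same two-layer structure as before: first the cohomology computation, then, when the cohomology is nonzero, a case check on the surviving cocycles.

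First I would write down $(H^2((1|4)_4,(1|4)_4))_0$. Since $(1|4)_4$ has $[\cdot,\cdot]=0$, $\rho=0$ and $\Gamma$ given by $\[f_i,f_i\]=e_1$ for $i=1,\dots,4$ (a ``maximal'' symmetric form with all four diagonal entries equal), I expect the even $2$-cocycles to be very constrained and the computation to give $(H^2((1|4)_4,(1|4)_4))_0=0$, yielding rigidity in all of $\LS_{(1|4)}$ directly. For $(4|1)_6$, whose bracket is $\[e_1,e_2\]=e_3$, $\[e_1,e_3\]=e_4$, $\[f_1,f_1\]=e_4$, and for $(1|4)_7$, whose bracket is the ``chain'' $\[e_1,f_2\]=f_1$, $\[e_1,f_3\]=f_2$, $\[e_1,f_4\]=f_3$, I expect $(H^2)_0$ to be nonzero but spanned by cocycles of the type $e_i^*\wedge f_j^*\otimes f_k$ which, when used to deform, destroy nilpotency — precisely the phenomenon already seen for $(2|1)_1$, $(1|2)_3$, $(3|1)_3$, $(1|3)_1$ and $(2|2)_6$. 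Concretely, for $(1|4)_7$ the surviving candidate cocycles should be of the form $e_1^*\wedge f_1^*\otimes f_j$, and turning one on gives a deformed superalgebra $\g_t$ with $\[e_1,f_k\]_t=f_{k-1}$ and an extra term $\[e_1,f_1\]_t=tf_j$; for $t\neq 0$ the descending central series of $\g_t$ stabilizes at $\Span\{f_1,\dots,f_4\}$, so $\g_t$ is not nilpotent, exactly as in the $(1|3)_1$ case. For $(4|1)_6$ the analysis is the same: the potentially nilpotent-preserving cocycle introduces a nonzero $\[e_i,f_1\]_t$ or $\[f_1,f_1\]_t$ term that feeds an eigenvector into the adjoint action of some $e_i$, again breaking nilpotency; I would verify this by exhibiting $(\g_t)^k=(\g_t)^1$ for all $k>1$ when $t\neq0$.

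The main obstacle is the honest computation of the three cohomology spaces $(H^2(\g,\g))_0$: this amounts to solving the linear systems cutting out the even $2$-cocycles $Z^2_0$ and the even $2$-coboundaries $B^2_0$ inside the appropriate $\wedge^2\g^*\otimes\g$, and is tedious in dimension $5$ (the cocycle condition is the linearized super Jacobi identity, and one must be careful with the sign conventions for odd arguments). A secondary, smaller obstacle is making sure that for the nonzero cohomology spaces we correctly identify \emph{all} cocycles whose associated deformation could stay nilpotent and rule each of them out; here the fact that $(1|4)_4$, $(4|1)_6$ and $(1|4)_7$ are the maximal/longest members of their families (large $\t$-value is small, long central series) means only a short list of cocycles needs checking, and each is dispatched by the explicit descending-central-series argument above. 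Once these computations are in hand, the three bullet points follow immediately, and in particular $(1|4)_4$ is rigid in the full variety $\LS_{(1|4)}$ while $(4|1)_6$ and $(1|4)_7$ are rigid in the nilpotent varieties $\NLS_{(4|1)}$ and $\NLS_{(1|4)}$ respectively.
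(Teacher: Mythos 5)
Your proposal follows essentially the same route as the paper: compute $(H^2(\g,\g))_0$ for each of the three superalgebras, conclude rigidity of $(1|4)_4$ in $\mathcal{LS}_{(1|4)}$ from the vanishing of its cohomology, and for $(4|1)_6$ and $(1|4)_7$ check that every surviving even cocycle pushes the deformed algebra out of the nilpotent locus because its descending central series stabilizes at a nonzero subspace. The only discrepancy is a harmless one of prediction: for $(4|1)_6$ the nontrivial classes turn out to be purely even, of the form $e_i^*\wedge e_j^*\otimes e_k$ (perturbing $\[e_1,e_2\]$, $\[e_1,e_3\]$ and $\[e_2,e_3\]$ by multiples of $e_2$ and $e_1$), rather than of the type $e_i^*\wedge f_j^*\otimes f_k$ you anticipated, but the non-nilpotency check you describe rules them out in exactly the same way.
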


\begin{proof}
\begin{align*}
(H^2((4|1)_6,(4|1)_6))_0=&\Span\{ e_1^*\wedge e_2^*\otimes e_2,\ e_1^*\wedge e_3^*\otimes e_2,\ e_2^*\wedge e_3^*\otimes e_1\}.\\
(H^2((1|4)_7,(1|4)_7))_0=&\Span\{ e_1^*\wedge f_1^*\otimes f_1,\ e_1^*\wedge f_1^*\otimes f_2,\ e_1^*\wedge f_1^*\otimes f_3,\ e_1^*f_1^*\otimes f_4\}.\\
(H^2((1|4)_4,(1|4)_4))_0=&\, 0.
\end{align*}
Notice that the deformed Lie superalgebra $\g_t$ of $(4|1)_6$ has product
$$\[e_1,e_2\]_t=e_3+\alpha e_2,\quad \[e_1,e_3\]_t=e_4+\beta e_2,\quad \[e_2,e_3\]_t=\gamma e_1,\quad \[f_1,f_1\]_t=e_4,$$
for some $\alpha,\beta,\gamma\in\C$. If $\alpha\neq0$, $\beta\neq0$, or $\gamma\neq0$, it is easy to see that $\g_t$ is not nilpotent.

\medskip

The same result follows analogously for $(1|4)_7$.

\end{proof}

\begin{theorem}\ 
\begin{enumerate}
\item The irreducible component of the variety $\mathcal{N}_{(5|0)}$ is $\mathcal{C}_1=\overline{O((5|0)_3)}$.
\item The irreducible component of the variety $\mathcal{N}_{(4|1)}$ is $\mathcal{C}_1=\overline{O((4|1)_6)}$.
\item The irreducible components of the variety $\mathcal{N}_{(1|4)}$ are:
\begin{enumerate}
\item $\mathcal{C}_1=\overline{O((1|4)_7)}$.
\item $\mathcal{C}_2=\overline{O((1|4)_4)}$.
\end{enumerate}
\item The irreducible component of the variety $\mathcal{N}_{(0|5)}$ is $\mathcal{C}_1=\overline{O((0|5)_0)}$.
\end{enumerate}
\end{theorem}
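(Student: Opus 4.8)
The plan is to establish each of the four claims by the standard scheme used throughout the paper: for a variety $\mathcal{N}_{(m|n)}$, one shows that the Zariski closure of a single orbit (or a short list of orbits) exhausts the variety, by combining (a) the rigidity statements already proved, (b) the degeneration table, and (c) a dimension/irreducibility count. Concretely, since $\mathcal{N}_{(m|n)}$ is the union of finitely many $G$-orbits, its irreducible components are exactly the closures $\overline{O(\g)}$ that are maximal with respect to the partial order $\leq$ induced by degeneration; so it suffices to identify the maximal orbits and to verify that every other orbit lies in the closure of one of them.

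First I would treat $\mathcal{N}_{(5|0)}$ and $\mathcal{N}_{(0|5)}$. The case $(0|5)$ is immediate: the only nilpotent Lie superalgebra is the abelian one $(0|5)_0$ (odd parts never bracket when $\g_0=0$ forces $\Gamma=0$ by (J2) with $\rho=0$ acting on a $0$-dimensional $\g_0$... more precisely $\Gamma$ lands in $\g_0=0$), so the variety is a single point and is trivially irreducible with component $\overline{O((0|5)_0)}$. For $(5|0)$ the superalgebras are precisely the $5$-dimensional nilpotent Lie algebras $\g_0$ (the odd part being zero), and it is classically known that the variety of $5$-dimensional nilpotent Lie algebras is irreducible with the filiform algebra — here $(5|0)_3$, which is the filiform algebra of dimension $5$ of the "first type" — in the closure of its orbit being dense; I would cite this known fact (e.g. via the degeneration chains $(5|0)_3 \to (5|0)_i$ for all $i$, which can be read off from or added to the degeneration tables) and conclude $\mathcal{C}_1 = \overline{O((5|0)_3)}$. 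The only subtlety is making sure $(5|0)_3$ degenerates to all of $(5|0)_1,\dots,(5|0)_8$ and to the abelian one; these are routine one-parameter base changes analogous to those already tabulated, and none is obstructed by Lemma \ref{lem:invariants} since $(5|0)_3$ has the largest orbit dimension among the $(5|0)$ algebras.

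Next I would handle $\mathcal{N}_{(4|1)}$: by Proposition \ref{prop:rigid51}, $(4|1)_6$ is rigid, so $\overline{O((4|1)_6)}$ contains an irreducible component $\mathcal{C}$; to show it is the \emph{only} component I must verify that every other $(4|1)$-superalgebra lies in $\overline{O((4|1)_6)}$. The degeneration table gives $(4|1)_6 \to (4|1)_5 \to (4|1)_2$, $(4|1)_6 \to (4|1)_4 \to (4|1)_3 \to (4|1)_1$, and $(4|1)_3 \to (4|1)_2$, and by transitivity and the observation in Section~2 that everything degenerates to the abelian algebra $(4|1)_0$, this covers all nine algebras $(4|1)_0,\dots,(4|1)_6$. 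Hence $\mathcal{N}_{(4|1)} = \overline{O((4|1)_6)}$ is irreducible. Similarly, for $\mathcal{N}_{(1|4)}$, Proposition \ref{prop:rigid51} gives two rigid algebras $(1|4)_7$ and $(1|4)_4$, so $\overline{O((1|4)_7)}$ and $\overline{O((1|4)_4)}$ each contain a component; the non-degeneration table shows $(1|4)_7 \not\to (1|4)_4$ and $(1|4)_4 \not\to (1|4)_7$ (via the $\Gamma\equiv 0$ invariant, Lemma \ref{lem:invariants}(\ref{inv:gamma}), and the derived-dimension invariants), so neither orbit closure contains the other and we have (at least) two distinct components $\mathcal{C}_1,\mathcal{C}_2$. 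Finally I would check that these two closures together cover the whole variety: the degeneration table gives $(1|4)_7 \to (1|4)_6 \to (1|4)_8 \to (1|4)_5$ and $(1|4)_4 \to (1|4)_3 \to (1|4)_2 \to (1|4)_1$, and together with $\to (1|4)_0$ this accounts for all of $(1|4)_0,\dots,(1|4)_8$; every algebra lies in $\mathcal{C}_1$ or $\mathcal{C}_2$, so there are no further components.

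The main obstacle is the completeness argument — confirming that the tabulated degenerations plus transitivity really do reach \emph{every} algebra from the claimed maximal ones, and that no additional component is hiding. This is where one must be careful: one has to produce, for each claimed edge not already in the degeneration tables of Section~5, an explicit parametrized base change (these are short but must be checked), and one has to be sure the maximal orbits are correctly identified, for instance by comparing orbit dimensions $\dim O(\g) = \dim G - \dim \Stab(\g)$ and invoking Lemma \ref{lem:invariants}(1) to rule out spurious degenerations in the reverse direction. For $(5|0)$ in particular the cleanest route is to invoke the known irreducibility of the variety of $5$-dimensional nilpotent Lie algebras rather than re-deriving the full Hasse diagram; the remaining three cases are small enough that the Hasse diagram, assembled from the tables above, settles the matter directly.
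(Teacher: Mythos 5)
Your proposal is correct and follows essentially the same route as the paper: the paper obtains these components exactly as you describe, by combining the rigidity statements of Proposition \ref{prop:rigid51} with the degeneration and non-degeneration tables of Case I (assembled into the Hasse diagram), the $(0|5)$ case being a single point and the $(5|0)$ case being the classical irreducibility of the variety of $5$-dimensional nilpotent Lie algebras. The only slip is cosmetic: $(4|1)_0,\dots,(4|1)_6$ are seven algebras, not nine.
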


\medskip

\scriptsize

\begin{center}

\begin{tikzpicture}[->,>=stealth',shorten >=0.08cm,auto,node distance=1.25cm,
                    thick,main node/.style={rectangle,draw,fill=gray!12,rounded corners=1.5ex,font=\sffamily \bf \bfseries },
                    purple node/.style={rectangle,draw, color=purple,fill=gray!12,rounded corners=1.5ex,font=\sffamily \bf \bfseries },
                    orange node/.style={rectangle,draw, color=orange,fill=gray!12,rounded corners=1.5ex,font=\sffamily \bf \bfseries },
                    green node/.style={rectangle,draw, color=green,fill=gray!12,rounded corners=1.5ex,font=\sffamily \bf \bfseries },
                    olive node/.style={rectangle,draw, color=olive,fill=gray!12,rounded corners=1.5ex,font=\sffamily \bf \bfseries },
                    connecting node/.style={circle, draw, color=purple },
                    rigid node/.style={rectangle,draw,fill=black!20,rounded corners=1.5ex,font=\sffamily \tiny \bfseries },style={draw,font=\sffamily \scriptsize \bfseries }]
                    
\node (181)   {$\dim O(\g)$};

\node (171) [below          of=181]      {$17$};
\node (161) [below          of=171]      {$16$};
\node (151) [below          of=161]      {$15$};
\node (141) [below          of=151]      {$14$};
\node (131) [below          of=141]      {$13$};
\node (121) [below          of=131]      {$12$};
\node (111) [below          of=121]      {$11$};
\node (101) [below          of=111]      {$10$};
\node (91) [below          of=101]      {$9$};
\node (81) [below          of=91]      {$8$};
\node (71) [below          of=81]      {$7$};
\node (61) [below          of=71]      {$6$};
\node (51) [below          of=61]      {$5$};
\node (41) [below          of=51]      {$4$};
\node (31) [below          of=41]      {$3$};
\node (21) [below          of=31]      {$2$};
\node (11) [below          of=21]      {$1$};
\node (01) [below          of=11]      {$0$};

\node (172) [right          of=171]      {};
\node (173) [right          of=172]      {};
\node (162) [right          of=161]      {};
\node (163) [right          of=162]      {};
\node (152) [right          of=151]      {};
\node (153) [right          of=152]      {};
\node (142) [right          of=141]      {};
\node (143) [right          of=142]      {};
\node (132) [right          of=131]      {};
\node (133) [right          of=132]      {};

\node (122) [right          of=121]      {};
\node (123) [right          of=122]      {};
\node (124) [right          of=123]      {};
\node (125) [right          of=124]      {};
\node (126) [right          of=125]      {};
\node (127) [right          of=126]      {};
\node (128) [right          of=127]      {};
\node (129) [right          of=128]      {};

\node (112) [right          of=111]      {};

\node (102) [right          of=101]      {};
\node (103) [right          of=102]      {};
\node (104) [right          of=103]      {};
\node (105) [right          of=104]      {};
\node (106) [right          of=105]      {};
\node (107) [right          of=106]      {};
\node (108) [right          of=107]      {};
\node (109) [right          of=108]      {};

\node (92) [right          of=91]      {};
\node (93) [right          of=92]      {};
\node (94) [right          of=93]      {};
\node (95) [right          of=94]      {};
\node (96) [right          of=95]      {};
\node (97) [right          of=96]      {};
\node (98) [right          of=97]      {};
\node (99) [right          of=98]      {};

\node (82) [right          of=81]      {};
\node (83) [right          of=82]      {};
\node (84) [right          of=83]      {};
\node (85) [right          of=84]      {};
\node (86) [right          of=85]      {};
\node (87) [right          of=86]      {};
\node (88) [right          of=87]      {};
\node (89) [right          of=88]      {};

\node (72) [right          of=71]      {};
\node (73) [right          of=72]      {};
\node (74) [right          of=73]      {};
\node (75) [right          of=74]      {};
\node (76) [right          of=75]      {};
\node (77) [right          of=76]      {};
\node (78) [right          of=77]      {};
\node (79) [right          of=78]      {};

\node (62) [right          of=61]      {};
\node (63) [right          of=62]      {};
\node (64) [right          of=63]      {};
\node (65) [right          of=64]      {};
\node (66) [right          of=65]      {};
\node (67) [right          of=66]      {};
\node (68) [right          of=67]      {};
\node (69) [right          of=68]      {};

\node (52) [right          of=51]      {};

\node (42) [right          of=41]      {};
\node (43) [right          of=42]      {};
\node (44) [right          of=43]      {};
\node (45) [right          of=44]      {};
\node (46) [right          of=45]      {};
\node (47) [right          of=46]      {};
\node (48) [right          of=47]      {};
\node (49) [right          of=48]      {};

\node (32) [right          of=31]      {};
\node (22) [right          of=21]      {};
\node (12) [right          of=11]      {};
\node (02) [right          of=01]      {};
\node (03) [right          of=02]      {};
\node (04) [right          of=03]      {};
\node (05) [right          of=04]      {};
\node (06) [right          of=05]      {};
\node (07) [right          of=06]      {};
\node (08) [right          of=07]      {};
\node (09) [right          of=08]      {};
\node (010) [right          of=09]      {};
\node (011) [right          of=010]      {};

\node [main node] (503)  [right of =172]                      {$(5|0)_3$ };
\node [main node] (504)  [right of =162]                      {$(5|0)_4$ };
\node [main node] (505)  [right of =151]                      {$(5|0)_5$ };
\node [main node] (506)  [right of =153]                      {$(5|0)_6$ };
\node [main node] (502)  [right of =142]                      {$(5|0)_2$ };
\node [main node] (508)  [right of =122]                      {$(5|0)_8$ };
\node [main node] (507)  [right of =102]                      {$(5|0)_7$ };
\node [main node] (501)   [right of =92]                       {$(5|0)_1$ };
\node [main node] (500)   [right of =02]                       {$(5|0)_0$ };

\node [main node] (416)  [right of =105]                      {$(4|1)_6$ };
\node [main node] (415)   [right of =95]                       {$(4|1)_5$ };
\node [main node] (414)   [right of =85]                       {$(4|1)_4$ };
\node [main node] (413)   [right of =75]                       {$(4|1)_3$ };
\node [main node] (412)   [right of =65]                       {$(4|1)_2$ };
\node [main node] (411)   [right of =45]                       {$(4|1)_1$ };
\node [main node] (410)   [right of =05]                       {$(4|1)_0$ };

\node [main node] (147)  [right of =128]                      {$(1|4)_7$ };
\node [main node] (144)  [right of =107]                      {$(1|4)_4$ };
\node [main node] (146)  [right of =109]                      {$(1|4)_6$ };
\node [main node] (143)  [right of =98]                      {$(1|4)_3$ };
\node [main node] (148)  [right of =88]                      {$(1|4)_8$ };
\node [main node] (142)  [right of =78]                      {$(1|4)_2$ };
\node [main node] (145)  [right of =68]                      {$(1|4)_5$ };
\node [main node] (141)  [right of =48]                      {$(1|4)_1$ };
\node [main node] (140)  [right of =08]                      {$(1|4)_0$ };

\node [main node] (050)  [right of =011]                      {$(0|5)_0$ };

\path[every node/.style={font=\sffamily\small}]

(503)  edge [bend right=0, color=black] node{}  (504)
(503)  edge [bend right=20, color=black] node{}  (505)
(503)  edge [bend right=-20, color=black] node{}  (506)
(504)  edge [bend right=0, color=black] node{}  (502)
(505)  edge [bend right=0, color=black] node{}  (502)
(505)  edge [bend right=20, color=black] node{}  (507)
(506)  edge [bend right=0, color=black] node{}  (502)
(502)  edge [bend right=0, color=black] node{}  (508)
(508)  edge [bend right=-40, color=black] node{}  (501)
(507)  edge [bend right=0, color=black] node{}  (501)
(501)  edge [bend right=0, color=black] node{}  (500)

(416)  edge [bend right=0, color=black] node{}  (415)
(416)  edge [bend right=-50, color=black] node{}  (414)
(415)  edge [bend right=50, color=black] node{}  (412)
(414)  edge [bend right=0, color=black] node{}  (413)
(413)  edge [bend right=0, color=black] node{}  (412)
(413)  edge [bend right=-50, color=black] node{}  (411)
(412)  edge [bend right=30, color=black] node{}  (410)
(411)  edge [bend right=0, color=black] node{}  (410)

(147)  edge [bend right=0, color=black] node{}  (146)
(144)  edge [bend right=0, color=black] node{}  (143)
(146)  edge [bend right=-30, color=black] node{}  (148)
(143)  edge [bend right=-60, color=black] node{}  (142)
(148)  edge [bend right=50, color=black] node{}  (145)
(142)  edge [bend right=-50, color=black] node{}  (141)
(145)  edge [bend right=30, color=black] node{}  (140)
(141)  edge [bend right=0, color=black] node{}  (140);

\end{tikzpicture}
\end{center}

\normalsize

\subsection{Case II}

\begin{theorem}
Nilpotent Lie superalgebras of dimension $(3|2)$ are, up to isomorphism:
$$
\begin{array}{lllll}
(3|2)_0: &    \[\cdot, \cdot \]=0.\\ 
(3|2)_1: &  \[ f_1, f_1 \]=e_1,   & \[ f_2, f_2\]= e_2. & & \\
(3|2)_2: &  \[ f_1, f_1 \]=e_1,   & \[ f_2, f_2\]= e_1. & & \\
(3|2)_3: &  \[ f_1, f_1 \]=e_1.   & & & \\
(3|2)_4: &  \[ f_1, f_2 \]=e_1,   & \[ f_2, f_2\]= e_2. & & \\
(3|2)_5: &\[ f_1, f_1\]=e_2,    & \[ f_1, f_2 \]=e_1,   & \[ f_2, f_2\]= e_3.& \\
%con parte par abeliana y rho\neq 0
(3|2)_6: &  \[ e_1, f_2 \]=f_1.   &  & & \\
(3|2)_7: &  \[ e_1, f_2 \]=f_1,   & \[ f_2, f_2\]= e_2. & & \\
%Con parte par heisemberg y \rho=0
(3|2)_{8}: &    \[e_1, e_2 \]=e_3. &&& \\
(3|2)_{9}: &    \[e_1, e_2 \]=e_3, &  \[ f_1, f_1 \]=e_3.   &   & \\
(3|2)_{10}: &    \[e_1, e_2 \]=e_3, &  \[ f_1, f_1 \]=e_3,   & \[ f_2, f_2\]= e_3.  & \\
%Con parte par heisemberg y \rho\neq0
(3|2)_{11}: &    \[e_1, e_2 \]=e_3, &  \[ e_1, f_2 \]=f_1.   &  & \\
(3|2)_{12}: &    \[e_1, e_2 \]=e_3, &  \[ e_1, f_2 \]=f_1,   & \[f_2,f_2\]=e_3. &  \\
(3|2)_{13}: &    \[e_1, e_2 \]=e_3, &  \[ e_1, f_2 \]=f_1,   & \[f_1,f_2\]=e_3, & \[f_2, f_2\]=2 e_2.    \\
\end{array}
$$

\end{theorem}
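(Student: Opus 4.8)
The plan is to run the three-step procedure of Section~3. The starting point is that $\g=\g_0\oplus\g_1$ is nilpotent exactly when $\g_0$ is a nilpotent Lie algebra and $\g_0$ acts on $\g_1$ by nilpotent endomorphisms; hence, once a nilpotent pair $(\g_0,\rho)$ is fixed, \emph{every} compatible $\Gamma$ already produces a nilpotent superalgebra and no further condition need be imposed at the end. Since the $3$-dimensional nilpotent Lie algebras are, up to isomorphism, the abelian algebra $\a_3$ and the Heisenberg algebra $\n_3$ with $[e_1,e_2]=e_3$, the work divides into these two cases.

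First I would classify the $2$-dimensional nilpotent modules. A nilpotent $\g_0$-module structure on $\g_1\cong\C^2$ is a Lie algebra homomorphism $\rho\colon\g_0\to\gl_2(\C)$ whose image consists of nilpotent operators; by Engel's theorem $\rho(\g_0)$ is conjugate into the strictly upper triangular matrices, which in dimension $2$ forces $\rho(\g_0)\subseteq\C E_{12}$. In particular $\rho(\g_0)$ is abelian, so in the Heisenberg case $\rho(e_3)=\rho([e_1,e_2])=0$; and in every case, after a base change in $\g_0$ (an automorphism of $\g_0$) and in $\g_1$, the module is either trivial or the one in which a chosen basis vector of $\g_0$ acts as $E_{12}$ and the remaining basis vectors act as $0$.

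Next, for each admissible pair $(\g_0,\rho)$ I would solve (J1) and (J2) for the symmetric bilinear map $\Gamma\colon\g_1\times\g_1\to\g_0$. When $\rho\equiv 0$, (J1) reads precisely $\Gamma(\g_1,\g_1)\subseteq\z(\g_0)$ and (J2) holds automatically: for $\g_0=\a_3$ one is then left with a triple $(\Gamma^1,\Gamma^2,\Gamma^3)$ of symmetric $2\times2$ matrices modulo the residual action $\Gamma^k\mapsto\sum_i T_{ki}S^t\Gamma^i S$ with $(T,S)\in\GL_3(\C)\times\GL_2(\C)$, while for $\g_0=\n_3$ the range collapses to the line $\C e_3=\z(\n_3)$ and one is left with a single symmetric $2\times2$ matrix modulo congruence. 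Organizing the first problem by the number of simultaneously diagonalizable matrices among the $\Gamma^i$ (an orbit invariant, as recorded in Section~3) yields $(3|2)_0,\dots,(3|2)_5$, and the second yields $(3|2)_8,(3|2)_9,(3|2)_{10}$. When $\rho\not\equiv 0$, (J1) and (J2) together force $\Gamma(\g_1,\g_1)\subseteq\ker\rho$ and turn (J1) into a genuine linear system tying the surviving components of $\Gamma$ to the Jordan block $E_{12}$; solving that system and normalizing under the stabilizer of $(\g_0,\rho)$ produces $(3|2)_6,(3|2)_7$ for $\g_0=\a_3$ and $(3|2)_{11},(3|2)_{12},(3|2)_{13}$ for $\g_0=\n_3$.

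Finally I would confirm that the fourteen superalgebras obtained are pairwise non-isomorphic, which is settled by comparing the discrete invariants $\dim[\g_0,\g_0]$, $\dim\Gamma(\g_1,\g_1)$, $\dim\rho(\g_0)\g_1$ and the rank and diagonalizability data of $\Gamma$, and by exhibiting explicit base changes in the handful of residual cases. I expect the main obstacle to be the ``wild'' subcase $\g_0=\a_3$, $\rho\equiv 0$: there one must classify by hand triples of symmetric $2\times2$ matrices up to simultaneous congruence and linear recombination, being careful that the partition by diagonalizability type is exhaustive and, above all, that none of the apparent parametric families really consists of mutually non-isomorphic superalgebras -- the pitfall corrected for dimension $(2|3)$ in Remark~\ref{cotejo2-3}. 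All the other subcases reduce to short finite computations.
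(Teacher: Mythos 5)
Your proposal is correct and follows essentially the same route the paper prescribes: the three-step scheme of Section~3 (nilpotent $\g_0$, nilpotent module $\rho$, then solving (J1)--(J2) for $\Gamma$ and normalizing), split over $\g_0$ abelian or Heisenberg and $\rho$ trivial or not, with the $\rho\equiv 0$ subcases reduced to tuples of symmetric $2\times 2$ matrices up to simultaneous congruence and recombination. The paper states this theorem without printing a proof, so your outline is exactly the intended argument and the case analysis you describe does yield the fourteen listed classes.
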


\begin{longtable}{|c|c|}
%&&\kill
\caption[]{Non-degenerations}
\label{table:non-deg52}\\
\hline
$\g\not\to\h$ & Reason\\
\hline
\endfirsthead
\caption[]{(continued)}\\
\hline
$\g\not\to\h$ & Reason\\
\hline
\endhead
$(3|2)_9\not\to(3|2)_6;\ (3|2)_2\not\to(3|2)_6;\ (3|2)_{10}\not\to(3|2)_6;$ & \multirow{2}{*}{Lemma \ref{lem:invariants} (\ref{inv:der}) $i=1$}\\
$(3|2)_4\not\to(3|2)_7,(3|2)_6;\ (3|2)_{1}\not\to(3|2)_7,(3|2)_6;\ (3|2)_{5}\not\to(3|2)_7,(3|2)_6$ & \\ \hline

$(3|2)_7\not\to(3|2)_2;\ (3|2)_{12}\not\to(3|2)_{10},(3|2)_2$ & Lemma \ref{lem:invariants} (\ref{inv:centro}) $i=1$\\ \hline

$(3|2)_{11}\not\to(3|2)_2,(3|2)_9,(3|2)_3$ & Lemma \ref{lem:invariants} (\ref{inv:gamma})\\ 
\hline
\end{longtable}

%\begin{landscape}

\scriptsize

\begin{longtable}{|c|lllll|c|}
%&&\kill
\caption[]{Degenerations Case II}
\label{table:deg}\\
\hline
$\g\to\h$ & \multicolumn{5}{|c|}{Parametrized Basis}\\
\hline
\endfirsthead
\caption[]{(continued)}\\
\hline
$\g\to\h$ & \multicolumn{5}{|c|}{Parametrized Basis}\\
\hline
\endhead
$(3|2)_9\to(3|2)_8$ & $x_1=e_1$, & $x_2=e_2$, & $x_3=e_3$, & $y_1=tf_1$, & $y_2=f_2$. \\ \hline
$(3|2)_9\to(3|2)_3$ & $x_1=e_1$, & $x_2=te_2$, & $x_3=e_3$, & $y_1=f_1$, & $y_2=f_2$. \\ \hline
$(3|2)_2\to(3|2)_3$ & $x_1=e_1$, & $x_2=e_2$, & $x_3=e_3$, & $y_1=f_1$, & $y_2=tf_2$. \\ \hline
$(3|2)_7\to(3|2)_3$ & $x_1=e_2$, & $x_2=te_1$, & $x_3=e_3$, & $y_1=f_2$, & $y_2=f_1$. \\ \hline
$(3|2)_7\to(3|2)_6$ & $x_1=e_1$, & $x_2=e_2$, & $x_3=e_3$, & $y_1=tf_1$, & $y_2=tf_2$. \\ \hline
$(3|2)_{10}\to(3|2)_2$ & $x_1=e_3$, & $x_2=te_1$, & $x_3=e_2$, & $y_1=f_1$, & $y_2=f_2$. \\ \hline
$(3|2)_{10}\to(3|2)_9$ & $x_1=e_1$, & $x_2=e_2$, & $x_3=e_3$, & $y_1=f_1$, & $y_2=tf_2$. \\ \hline
$(3|2)_{11}\to(3|2)_6$ & $x_1=e_1$, & $x_2=te_2$, & $x_3=e_3$, & $y_1=f_1$, & $y_2=f_2$. \\ \hline
$(3|2)_{11}\to(3|2)_8$ & $x_1=e_1$, & $x_2=e_2$, & $x_3=e_3$, & $y_1=f_1$, & $y_2=tf_2$. \\ \hline
$(3|2)_4\to(3|2)_2$ & $x_1=e_1$, & $x_2=t^{-1}e_2$, & $x_3=e_3$, & $y_1=-\frac{i}{2}f_1+if_2$, & $y_2=\frac{1}{2}f_1+f_2$. \\ \hline
$(3|2)_{12}\to(3|2)_7$ & $x_1=e_1$, & $x_2=e_3$, & $x_3=te_2$, & $y_1=f_1$, & $y_2=f_2$. \\ \hline
$(3|2)_{12}\to(3|2)_{11}$ & $x_1=e_1$, & $x_2=e_2$, & $x_3=e_3$, & $y_1=tf_1$, & $y_2=tf_2$. \\ \hline
$(3|2)_{12}\to(3|2)_9$ & $x_1=e_1$, & $x_2=e_2$, & $x_3=e_3$, & $y_1=f_2$, & $y_2=t^{-1}f_1$. \\ \hline
$(3|2)_{1}\to(3|2)_4$ & $x_1=\frac{t}{4}(e_1+e_2)$, & $x_2=\frac{1}{4}(e_2-e_1)$, & $x_3=e_3$, & $y_1=\frac{t}{2}(if_1+f_2)$, & $y_2=\frac{1}{2}(f_2-if_1)$. \\ \hline
$(3|2)_{13}\to(3|2)_4$ & $x_1=e_3$, & $x_2=2e_2$, & $x_3=te_1$, & $y_1=f_1$, & $y_2=f_2$. \\ \hline
$(3|2)_{13}\to(3|2)_{12}$ & $x_1=e_1$, & $x_2=e_2-\frac{t^{-1}}{2}e_3$, & $x_3=e_3$, & $y_1=tf_1$, & $y_2=tf_2$. \\ \hline
$(3|2)_{13}\to(3|2)_{10}$ & $x_1=te_1$, & $x_2=t^{-1}e_2$, & $x_3=e_3$, & $y_1=-i(f_1-f_2)$, & $y_2=f_1+f_2$. \\ \hline
$(3|2)_{5}\to(3|2)_{1}$ & $x_1=e_2$, & $x_2=e_3$, & $x_3=t^{-1}e_1$, & $y_1=f_1$, & $y_2=f_2$. \\ \hline

\end{longtable}

%\end{landscape}

\normalsize

\begin{proposition}\label{prop:rigid52}
The Lie superalgebras $(3|2)_5$ and $(3|2)_{13}$ are rigid in the variety $\mathcal{LS}_{(3|2)}$.
\end{proposition}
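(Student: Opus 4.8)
The plan is to establish rigidity of $(3|2)_5$ and $(3|2)_{13}$ in $\mathcal{LS}_{(3|2)}$ by the same cohomological method used in Propositions \ref{prop:rigid2}--\ref{prop:rigid51}, namely by computing $(H^2(\g,\g))_0$ and either showing it vanishes or showing that every class in it is tangent to a trivial (isomorphism-preserving) deformation. Concretely, for each of $\g=(3|2)_5$ and $\g=(3|2)_{13}$ I would first set up the Chevalley--Eilenberg complex of the Lie superalgebra with coefficients in the adjoint module, restricted to the even part, using the fixed homogeneous basis $\{e_1,e_2,e_3,f_1,f_2\}$. The even $2$-cochains split into the ``$\g_0\wedge\g_0\to\g_0$'' piece, the ``$\g_0\wedge\g_1\to\g_1$'' piece, and the symmetric ``$\g_1\odot\g_1\to\g_0$'' piece; I would write down a basis of each, apply the differential $d^1$ to all $1$-cochains in $\End(\g_0|\g_1)_0 \cong \gl_3\oplus\gl_2$ to get $B^2$, and compute $Z^2$ by imposing the cocycle conditions coming from the super Jacobi identity (equivalently, identities (J1) and (J2) linearized at $\g$). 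The quotient $Z^2/B^2$ is then a finite linear-algebra computation.

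The second step is to decide rigidity from the outcome. If the computation yields $(H^2(\g,\g))_0=0$ we are done immediately, since then the orbit $O(\g)$ is open in $\mathcal{LS}_{(3|2)}$ by the standard deformation argument recalled after Definition \ref{def:rigid}. If instead $(H^2(\g,\g))_0\neq 0$ — which is plausible for $(3|2)_{13}$ given the pattern in $(2|2)_6$ and $(4|1)_6$ — I would argue exactly as in the proofs of Propositions \ref{prop:rigid3}, \ref{prop:rigid4} and \ref{prop:rigid51}: for each surviving cohomology class write the one-parameter deformed bracket $\[\cdot,\cdot\]_t$, and check that for $t\neq 0$ either the deformed superalgebra is no longer nilpotent (so it leaves $\mathcal{N}_{(3|2)}$ but this does not affect openness in $\mathcal{LS}_{(3|2)}$) or, more to the point, is isomorphic back to $\g$, so the class is integrable to a trivial deformation and does not enlarge the closure of any other orbit. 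Since the relevant variety here is $\mathcal{LS}_{(3|2)}$ and not $\mathcal{N}_{(3|2)}$, the key thing to verify is that no deformation produces a genuinely new Lie superalgebra structure in $\mathcal{LS}_{(3|2)}$; the degeneration table already shows $(3|2)_5\to(3|2)_1$ and $(3|2)_{13}\to(3|2)_4,(3|2)_{12},(3|2)_{10}$, so $(3|2)_5$ and $(3|2)_{13}$ sit at the top of their respective chains among nilpotent superalgebras, and one must rule out degenerations \emph{into} $(3|2)_5$ or $(3|2)_{13}$ from any (possibly non-nilpotent) superalgebra of larger orbit dimension.

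The main obstacle I anticipate is precisely this last point: openness in $\mathcal{LS}_{(3|2)}$, rather than in $\mathcal{N}_{(3|2)}$, requires controlling \emph{all} of $H^2$ restricted to the even part, including deformation directions that move outside the nilpotent locus, and then showing each such direction is tangent to the $G$-orbit. For $(3|2)_5$, whose $\Gamma$ has rank $3$ (the three matrices $\Gamma^1,\Gamma^2,\Gamma^3$ are ``generic'' in the sense of the wild-problem discussion in Section 4), I expect $(H^2)_0$ to be small or zero, making that case routine. For $(3|2)_{13}$, which mixes a nonabelian $\g_0$ (the Heisenberg-type bracket $\[e_1,e_2\]=e_3$), a nontrivial action $\[e_1,f_2\]=f_1$, and a nontrivial $\Gamma$ with $\[f_2,f_2\]=2e_2$, the cohomology is likely nonzero and the bulk of the work will be the explicit integration/isomorphism check for each basis cocycle — analogous to, but longer than, the $(2|2)_6$ computation. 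I would organize that check by using the invariants of Lemma \ref{lem:invariants} (dimensions of $\z(\g)_i$, of $\[\g,\g\]_i$, and of the relevant $(\alpha,\beta,\gamma)$-derivation spaces) to pin down that any deformed $\g_t$ with $t\neq 0$ has the same invariants as $\g$ and hence, by the finiteness of the classification in Case II, must be isomorphic to $\g$ itself.
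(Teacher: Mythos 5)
Your approach coincides with the paper's: the proof there consists precisely of the computation $(H^2((3|2)_5,(3|2)_5))_0=0$ and $(H^2((3|2)_{13},(3|2)_{13}))_0=0$, so rigidity in $\mathcal{LS}_{(3|2)}$ follows immediately and the fallback integration/isomorphism argument you prepare for $(3|2)_{13}$ turns out to be unnecessary. Your first branch is exactly the paper's proof.
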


\begin{proof}
\begin{align*}
(H^2((3|2)_5,(3|2)_5))_0=&\,0,\\
(H^2((3|2)_{13},(3|2)_{13}))_0=&\,0.
\end{align*}
\end{proof}

\medskip

\begin{theorem}
The irreducible components of the variety $\mathcal{N}_{(3|2)}$ are:
\begin{enumerate}
\item $\mathcal{C}_1=\overline{O((3|2)_5)}$.
\item $\mathcal{C}_2=\overline{O((3|2)_{13})}$.
\end{enumerate}
\end{theorem}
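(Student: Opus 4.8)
The plan is to show that the two orbit closures $\overline{O((3|2)_5)}$ and $\overline{O((3|2)_{13})}$ are irreducible components of $\mathcal{N}_{(3|2)}$, and that every other superalgebra in the list lies in one of them. The strategy proceeds in three steps, entirely parallel to the lower-dimensional cases treated above.

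\emph{Step 1: the two closures are genuine components.} By Proposition \ref{prop:rigid52}, both $(3|2)_5$ and $(3|2)_{13}$ are rigid in $\mathcal{LS}_{(3|2)}$, hence a fortiori their orbits are open in the closed subvariety $\mathcal{N}_{(3|2)}$. As recalled after Definition \ref{def:rigid}, rigidity of $\g$ in a variety forces $\overline{O(\g)}$ to be an irreducible component of that variety. Thus $\mathcal{C}_1=\overline{O((3|2)_5)}$ and $\mathcal{C}_2=\overline{O((3|2)_{13})}$ are irreducible components of $\mathcal{N}_{(3|2)}$. They are distinct because neither degenerates to the other: indeed $(3|2)_{13}\not\to(3|2)_5$ would be needed for $\mathcal{C}_1\subset\mathcal{C}_2$, but the degeneration table shows $(3|2)_5\to(3|2)_1$ and $(3|2)_5\not\to(3|2)_7,(3|2)_6$ while the chains out of $(3|2)_{13}$ reach $(3|2)_7$; one checks via an invariant from Lemma \ref{lem:invariants} (e.g.\ $\dim[\cdot,\cdot]_0$ or orbit dimension) that neither rigid algebra lies in the other's closure.

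\emph{Step 2: every other superalgebra degenerates into $\mathcal{C}_1\cup\mathcal{C}_2$.} Using the Degenerations Case II table together with transitivity of $\to$, I would trace a path from each of $(3|2)_0,(3|2)_1,\dots,(3|2)_{12}$ to either $(3|2)_5$ or $(3|2)_{13}$ as a \emph{target} — that is, exhibit each listed algebra as a degeneration of one of the two rigid ones. Concretely: $(3|2)_{13}\to(3|2)_{12}\to(3|2)_{11}\to(3|2)_8,(3|2)_6$, $(3|2)_{13}\to(3|2)_{10}\to(3|2)_9\to(3|2)_3$, $(3|2)_{13}\to(3|2)_{12}\to(3|2)_7$, $(3|2)_{13}\to(3|2)_4\to(3|2)_2$, and $(3|2)_5\to(3|2)_1$; together with $(3|2)_5\to(3|2)_1\to(3|2)_4$ one also reaches the orbit of $(3|2)_4$ from $\mathcal{C}_1$, and all nonzero algebras degenerate to $(3|2)_0$. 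Chasing the Hasse diagram of all primary degenerations, one confirms that $\{(3|2)_k : 0\le k\le 12\}\subset\mathcal{C}_1\cup\mathcal{C}_2$. Since $\mathcal{N}_{(3|2)}$ is the union of the (finitely many) orbit closures $\overline{O((3|2)_k)}$, it follows that $\mathcal{N}_{(3|2)}=\mathcal{C}_1\cup\mathcal{C}_2$, so there are no further components.

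\emph{Step 3: assemble.} Combining Steps 1 and 2: $\mathcal{C}_1$ and $\mathcal{C}_2$ are irreducible components, they are distinct, and their union is all of $\mathcal{N}_{(3|2)}$; hence they are exactly the irreducible components. The main obstacle is the bookkeeping in Step 2 — one must verify that the degeneration table is complete enough that the Hasse diagram has precisely two maximal elements, which in turn relies on the non-degeneration table (Lemma \ref{lem:invariants}) ruling out any degeneration that would make some $(3|2)_k$ with $k\le 12$ fail to lie under $(3|2)_5$ or $(3|2)_{13}$; in particular one must make sure no additional rigid algebra hides among $(3|2)_0,\dots,(3|2)_{12}$, which is exactly what the explicit degenerations in the table exclude.
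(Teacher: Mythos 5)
Your proposal is correct and follows essentially the same route as the paper: Proposition \ref{prop:rigid52} (vanishing of $(H^2)_0$) makes $\overline{O((3|2)_5)}$ and $\overline{O((3|2)_{13})}$ irreducible components, and the degeneration table for Case II (equivalently, the Hasse diagram) shows every $(3|2)_k$ with $0\le k\le 12$ lies below one of these two, so their union is all of $\mathcal{N}_{(3|2)}$. The degeneration chains you list match the paper's table, and distinctness of the two components is immediate since both orbits are open and the algebras are non-isomorphic.
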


\medskip

\begin{center}

\begin{tikzpicture}[->,>=stealth',shorten >=0.08cm,auto,node distance=1.25cm,
                    thick,main node/.style={rectangle,draw,fill=gray!12,rounded corners=1.5ex,font=\sffamily \bf \bfseries },
                    purple node/.style={rectangle,draw, color=purple,fill=gray!12,rounded corners=1.5ex,font=\sffamily \bf \bfseries },
                    orange node/.style={rectangle,draw, color=orange,fill=gray!12,rounded corners=1.5ex,font=\sffamily \bf \bfseries },
                    green node/.style={rectangle,draw, color=green,fill=gray!12,rounded corners=1.5ex,font=\sffamily \bf \bfseries },
                    olive node/.style={rectangle,draw, color=olive,fill=gray!12,rounded corners=1.5ex,font=\sffamily \bf \bfseries },
                    connecting node/.style={circle, draw, color=purple },
                    rigid node/.style={rectangle,draw,fill=black!20,rounded corners=1.5ex,font=\sffamily \tiny \bfseries },style={draw,font=\sffamily \scriptsize \bfseries }]
                    
\node (101)   {$\dim O(\g)$};

\node (91) [below          of=101]      {$9$};
\node (81) [below          of=91]      {$8$};
\node (71) [below          of=81]      {$7$};
\node (61) [below          of=71]      {$6$};
\node (51) [below          of=61]      {$5$};
\node (41) [below          of=51]      {$4$};
\node (31) [below          of=41]      {$3$};
\node (21) [below          of=31]      {$2$};
\node (11) [below          of=21]      {$1$};
\node (01) [below          of=11]      {$0$};

\node (92) [right          of=91]      {};
\node (93) [right          of=92]      {};
\node (94) [right          of=93]      {};
\node (95) [right          of=94]      {};
\node (96) [right          of=95]      {};
\node (97) [right          of=96]      {};
\node (98) [right          of=97]      {};
\node (99) [right          of=98]      {};

\node (82) [right          of=81]      {};
\node (83) [right          of=82]      {};
\node (84) [right          of=83]      {};
\node (85) [right          of=84]      {};
\node (86) [right          of=85]      {};
\node (87) [right          of=86]      {};
\node (88) [right          of=87]      {};
\node (89) [right          of=88]      {};

\node (72) [right          of=71]      {};
\node (73) [right          of=72]      {};
\node (74) [right          of=73]      {};
\node (75) [right          of=74]      {};
\node (76) [right          of=75]      {};
\node (77) [right          of=76]      {};
\node (78) [right          of=77]      {};
\node (79) [right          of=78]      {};

\node (62) [right          of=61]      {};
\node (63) [right          of=62]      {};
\node (64) [right          of=63]      {};
\node (65) [right          of=64]      {};
\node (66) [right          of=65]      {};
\node (67) [right          of=66]      {};
\node (68) [right          of=67]      {};
\node (69) [right          of=68]      {};

\node (52) [right          of=51]      {};
\node (53) [right          of=52]      {};
\node (54) [right          of=53]      {};

\node (42) [right          of=41]      {};
\node (43) [right          of=42]      {};
\node (44) [right          of=43]      {};
\node (45) [right          of=44]      {};
\node (46) [right          of=45]      {};
\node (47) [right          of=46]      {};
\node (48) [right          of=47]      {};
\node (49) [right          of=48]      {};

\node (32) [right          of=31]      {};
\node (33) [right          of=32]      {};
\node (34) [right          of=33]      {};

\node (22) [right          of=21]      {};
\node (12) [right          of=11]      {};
\node (02) [right          of=01]      {};
\node (03) [right          of=02]      {};
\node (04) [right          of=03]      {};
\node (05) [right          of=04]      {};
\node (06) [right          of=05]      {};
\node (07) [right          of=06]      {};
\node (08) [right          of=07]      {};
\node (09) [right          of=08]      {};
\node (010) [right          of=09]      {};
\node (011) [right          of=010]      {};

\node [main node] (325)  [right of =93]                      {$(3|2)_5$ };
\node [main node] (321)   [right of =82]                       {$(3|2)_1$ };
\node [main node] (3213)   [right of =84]                       {$(3|2)_{13}$ };
\node [main node] (324)   [right of =72]                       {$(3|2)_4$ };
\node [main node] (3212)   [right of =74]                       {$(3|2)_{12}$ };
\node [main node] (327)   [right of =61]                       {$(3|2)_7$ };
\node [main node] (3210)   [right of =63]                       {$(3|2)_{10}$ };
\node [main node] (3211)  [right of =65]                      {$(3|2)_{11}$ };
\node [main node] (322)   [right of =52]                       {$(3|2)_2$ };
\node [main node] (329)   [right of =54]                       {$(3|2)_9$ };
\node [main node] (323)   [right of =42]                       {$(3|2)_3$ };
\node [main node] (326)   [right of =44]                       {$(3|2)_6$ };
\node [main node] (328)   [right of =33]                       {$(3|2)_8$ };
\node [main node] (320)   [right of =03]                       {$(3|2)_0$ };

\path[every node/.style={font=\sffamily\small}]

(325)  edge [bend right=0, color=black] node{}  (321)
(321)  edge [bend right=0, color=black] node{}  (324)
(3213)  edge [bend right=0, color=black] node{}  (324)
(3213)  edge [bend right=0, color=black] node{}  (3212)
(3213)  edge [bend right=30, color=black] node{}  (3210)
(324)  edge [bend right=0, color=black] node{}  (322)
(3212)  edge [bend right=0, color=black] node{}  (327)
(3212)  edge [bend right=0, color=black] node{}  (329)
(3212)  edge [bend right=0, color=black] node{}  (3211)
(327)  edge [bend right=10, color=black] node{}  (323)
(327)  edge [bend right=-20, color=black] node{}  (326)
(3210)  edge [bend right=0, color=black] node{}  (322)
(3210)  edge [bend right=0, color=black] node{}  (329)
(3211)  edge [bend right=-60, color=black] node{}  (328)
(3211)  edge [bend right=-20, color=black] node{}  (326)
(322)  edge [bend right=0, color=black] node{}  (323)
(329)  edge [bend right=0, color=black] node{}  (323)
(329)  edge [bend right=30, color=black] node{}  (328)
(323)  edge [bend right=0, color=black] node{}  (320)
(326)  edge [bend right=0, color=black] node{}  (320)
(328)  edge [bend right=0, color=black] node{}  (320);

\end{tikzpicture}
\end{center}

\subsection{Case III}\ 

\medskip

In this section we provide the techniques, mentioned in Section 2, that we  use to obtain the
algebraic classification for the particular (and most difficult) case of Lie superalgebras $([\cdot, \cdot], \rho,  \Gamma)$ of  dimension $(2|3)$ such that $[\cdot, \cdot]=0 $ and $\rho=0$.

\medskip

As we mentioned previously,  classifying Lie superalgebras under the above hypothesis  is equivalent to finding the orbits of the action of $\GL_2(\C) \oplus \GL_3(\C)$ on $\Sym_3(\C) \times \Sym_3(\C)$ given by:
\begin{equation}
(T,S) \cdot (\Gamma^1, \Gamma^2)= 
(T_{11}  S^t \Gamma^1S+ T_{12}  S^t \Gamma^2S, \;  T_{21}  S^t \Gamma^1S+ T_{22}  S^t \Gamma^2S).
\label{action23}
\end{equation} 
Hence, in order to find the orbit  
of  $(\Gamma^1, \Gamma^2)$, which we  denote by $[ \Gamma^1, \Gamma^2]$, it is useful to ask if there exists $S\in\GL_3(\C)$ for which  $S^t\Gamma^1S$ and  $S^t\Gamma^2S $ are diagonal matrices, in which case we obtain the following result.

\begin{proposition}
Let $(\Gamma^1, \Gamma^2)$ be such that $\{\Gamma^1, \Gamma^2\}$ is simultaneously dia\-go\-na\-li\-za\-ble. Then,  $(\Gamma^1, \Gamma^2)$ belongs to one and only one of the following orbits:

\begin{centering}
{\setstretch{1.8}
\begin{tabular}{llcll}
$(2|3)_0$: &   $[ 0, 0 ]$,   &$\quad$&   $(2|3)_4$: & $[\I_1, \I_2 ]$,
\\ 
$(2|3)_1$: &   $[ \I_1, 0]$,&$\quad$&    $(2|3)_5$: & $[\I_1+I_3, \I_2 ]$,
\\ 
$(2|3)_2$: & $[\I_1+ \I_2, 0]$,&$\quad$&    $(2|3)_6$: & $[\I_1+I_3, \I_2+\I_3 ]$,\\
$(2|3)_3$: &$[\id_3, 0]$,&&&\\
\end{tabular}
}
\end{centering}

where 
$$I_1= 
\begin{psmallmatrix}
1 &0&0\\
0 &0&0\\
 0&0&0
\end{psmallmatrix}, \quad I_2= 
\begin{psmallmatrix}
0 &0&0\\
0 &1&0\\
 0&0&0
\end{psmallmatrix}, \quad \text{and}\quad I_3= 
\begin{psmallmatrix}
0 &0&0\\
0 &0&0\\
 0&0&1
\end{psmallmatrix}. 
$$
(see \cite{Her}).
\end{proposition}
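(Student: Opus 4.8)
The plan is to reduce the problem to the simultaneous diagonal case and then classify pairs of real (here complex) diagonal symmetric matrices under the residual group action. Since $\{\Gamma^1,\Gamma^2\}$ is simultaneously diagonalizable, there exists $S_0\in\GL_3(\C)$ such that $S_0^t\Gamma^1S_0$ and $S_0^t\Gamma^2S_0$ are both diagonal; applying the element $(\id_2,S_0)\in\GL_2(\C)\oplus\GL_3(\C)$ via \eqref{action23} we may assume from the start that $\Gamma^1=\diag(a_1,a_2,a_3)$ and $\Gamma^2=\diag(b_1,b_2,b_3)$. Thus the pair $(\Gamma^1,\Gamma^2)$ is encoded by three points $p_i=(a_i,b_i)\in\C^2$, $i=1,2,3$, and the action on such diagonal pairs factors through: (i) the $\GL_2(\C)$ factor $T$, which acts linearly on $\C^2$ by $p_i\mapsto T\cdot p_i$ (simultaneously on all three points), and (ii) the subgroup of $\GL_3(\C)$ preserving the set of diagonal pairs, which is generated by permutations of the coordinate indices (permuting $p_1,p_2,p_3$) together with diagonal rescalings $S=\diag(s_1,s_2,s_3)$, whose effect is $p_i\mapsto s_i^2\,p_i$, i.e. each point may be individually rescaled by an arbitrary nonzero scalar.

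Next I would classify the configurations $\{p_1,p_2,p_3\}\subset\C^2$ up to: individual nonzero scaling of each point, permutation of the three points, and a common $\GL_2(\C)$ transformation. After individual scaling, each $p_i$ is either $0$ or a point of $\mathbb{P}^1(\C)$ (its class in the projectivization of $\C^2$); the common $\GL_2$-action is then $\mathrm{PGL}_2(\C)$ acting on the nonzero points' projective classes, while $\GL_2$ also allows one to move around the origin-vs-nonzero distinction only trivially (the origin is fixed). So the discrete data is: how many of the $p_i$ are zero ($0$, $1$, $2$, or $3$), and among the nonzero ones, the multiset of projective points together with their configuration under $\mathrm{PGL}_2(\C)$. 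Here I would use that $\mathrm{PGL}_2(\C)$ acts $3$-transitively on $\mathbb{P}^1(\C)$: three distinct projective points can always be moved to $[1:0],[0:1],[1:1]$; two distinct points to $[1:0],[0:1]$; a single point to $[1:0]$; and repeated projective points are an invariant (the multiset of coincidences is preserved). Enumerating: three zero points gives $(2|3)_0$; two zero, one nonzero gives $(2|3)_1$; one zero and two nonzero points, which are either equal or distinct projectively, gives $(2|3)_2$ (equal, both $\sim\I_1$-type, combining to $\I_1+\I_2$ after a suitable further normalization placing them at distinct indices) versus a configuration handled by scaling; zero zeros and three nonzero points distinguished by how many distinct projective classes appear (all three coincide: $(2|3)_3=[\id_3,0]$; exactly two coincide; all three distinct: $(2|3)_6$), and so on, matching the six listed normal forms plus the zero case.

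The main obstacle I expect is the careful bookkeeping at the boundary between the "scaling" freedom and the "$\GL_2$" freedom: individual rescaling changes a point's length but not its direction, whereas the common $\GL_2$ can mix the two coordinates and in particular can change whether a given $p_i$ "uses" only the first or only the second basis vector of $\C^2$. One must check that no two of the seven listed orbits collapse — e.g. that $[\I_1+I_3,\I_2]$ and $[\I_1+I_3,\I_2+\I_3]$ are genuinely distinct — and that the list is exhaustive. For non-collapse I would exhibit discrete invariants: the number of nonzero diagonal entries (i.e. $\rk$) of a generic linear combination $\lambda\Gamma^1+\mu\Gamma^2$, equivalently the number of $p_i$ that are nonzero, and then for the nonzero points the number of distinct projective classes and their multiplicities; these are manifestly preserved by the whole group and separate the cases. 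For exhaustiveness, the enumeration above over (number of zeros, partition of the nonzero points into projective-coincidence classes) is finite and short, so one simply checks each case lands on one of the listed normal forms after a final permutation of indices to place distinct classes on distinct coordinates. Since the statement cites \cite{Her}, I would simply reference that the detailed case check is carried out there, and present the reduction to diagonal pairs plus the invariant-based separation as the proof's skeleton.
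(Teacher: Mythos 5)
Your proposal is correct, and it actually supplies an argument where the paper gives none: the paper's entire ``proof'' of this proposition is the citation to \cite{Her}. Your skeleton --- reduce to a pair of diagonal matrices, encode it as three points $p_i=(a_i,b_i)\in\C^2$, act by a common $T\in\GL_2(\C)$, by permutations, and by individual nonzero rescalings (every element of $\C^\times$ being a square), then enumerate configurations by the number of zero points and the partition of the nonzero ones into projective classes, using $3$-transitivity of $\mathrm{PGL}_2(\C)$ on $\mathbb{P}^1$ --- does reproduce exactly the seven listed normal forms, and your proposed invariants (generic rank of the pencil; multiset of multiplicities of the projective classes) take pairwise distinct values $(0,\emptyset)$, $(1,\{1\})$, $(2,\{2\})$, $(3,\{3\})$, $(2,\{1,1\})$, $(3,\{2,1\})$, $(3,\{1,1,1\})$ on $(2|3)_0,\dots,(2|3)_6$, so the orbits do not collapse. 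Two points deserve to be made explicit. First, the congruences carrying one diagonal pair to another diagonal pair are \emph{not} all monomial (e.g.\ any orthogonal $S$ fixes $(\id_3,0)$), so the reduction to ``permutations plus rescalings plus $\GL_2$'' proves exhaustiveness but not separation; your separation step therefore genuinely needs invariants of the \emph{full} action, as you say. Second, the claim that the projective-class partition is ``manifestly'' such an invariant is best justified by identifying it with basis-free data of the pencil $\Span\{\Gamma^1,\Gamma^2\}$: the diagonal entry $\lambda a_i+\mu b_i$ vanishes exactly when $[\lambda:\mu]$ is the point of $\mathbb{P}^1$ dual to the class of $p_i$, so the partition is precisely the multiset of rank drops of $\lambda\Gamma^1+\mu\Gamma^2$ at the finitely many degenerate members of the pencil --- and rank is a congruence invariant while $\GL_2(\C)$ merely reparametrizes the pencil. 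With these two remarks added, your outline is a complete and self-contained proof, which is more than the paper itself provides.
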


In order to find the orbit of a pair $(\Gamma^1, \Gamma^2)$ such that $\{\Gamma^1, \Gamma^2\}$  is non-simultaneously diagonalizable, we observe the following. 
\begin{lemma}\ 
\begin{enumerate}[(i)]
\item If $\rk(\Gamma^i) \leq 1$ for $i\in \{1,2\}$, then $\{ \Gamma^1, \Gamma^2\}$ is simultaneously diagonali\-zable.
\item If $\{ \Gamma^1, \Gamma^2 \}$ is simultaneously diagonalizable,  then  for all $( {\Gamma^\prime}^1, {\Gamma^\prime}^2)$ $\in   
[ \Gamma^1, \Gamma^2 ]$ it follows that  $\{{ \Gamma^\prime}^1, {\Gamma^\prime}^2 \}$
is simultaneously diagonalizable.
\end{enumerate}
\label{rank1sd}
 \end{lemma}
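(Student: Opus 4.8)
The plan is to prove the two claims separately, using only elementary facts about quadratic forms over $\C$.

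For part (i), suppose $\rk(\Gamma^1)\le 1$ and $\rk(\Gamma^2)\le 1$. If either matrix is zero the claim is trivial, so assume both have rank exactly $1$. Over $\C$ a symmetric rank-one matrix can be written as $\Gamma^i = v_i v_i^t$ for some nonzero $v_i\in\C^3$. If $v_1$ and $v_2$ are proportional then $\Gamma^1$ and $\Gamma^2$ are proportional and a single congruence diagonalizes both. Otherwise, extend $\{v_1,v_2\}$ to a basis of $\C^3$ and change coordinates so that $v_1 = \alpha_1 e_1$, $v_2 = \alpha_2 e_2$; in these coordinates both $\Gamma^1$ and $\Gamma^2$ are already diagonal (one nonzero diagonal entry each, in distinct positions). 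I would present this as: there is $S\in\GL_3(\C)$ with $S^t\Gamma^1 S = \diag(*,0,0)$ and $S^t\Gamma^2 S=\diag(0,*,0)$. This step is straightforward; the only care needed is the passage from ``rank one symmetric'' to ``$vv^t$'', which over $\C$ holds because every nondegenerate quadratic form on the line $\operatorname{im}(\Gamma^i)$ is equivalent to $x^2$.

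For part (ii), the key observation is that being simultaneously diagonalizable is invariant under the action \eqref{action23}. Suppose $\{\Gamma^1,\Gamma^2\}$ is simultaneously diagonalizable, i.e.\ there is $P\in\GL_3(\C)$ with $P^t\Gamma^1 P$ and $P^t\Gamma^2 P$ both diagonal. Let $({\Gamma^\prime}^1,{\Gamma^\prime}^2)$ lie in the same orbit, so there exist $T\in\GL_2(\C)$, $S\in\GL_3(\C)$ with ${\Gamma^\prime}^k = T_{k1}S^t\Gamma^1 S + T_{k2}S^t\Gamma^2 S$. Put $Q = S^{-1}P$ (equivalently, diagonalize the pair $S^t\Gamma^i S$ by the congruence that works, namely $(S^{-1}P)$). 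Then $Q^t {\Gamma^\prime}^k Q = T_{k1}\,P^t\Gamma^1 P + T_{k2}\,P^t\Gamma^2 P$, which is a linear combination of two diagonal matrices and hence diagonal, for each $k\in\{1,2\}$. Thus $\{{\Gamma^\prime}^1,{\Gamma^\prime}^2\}$ is simultaneously diagonalizable via the single matrix $Q$.

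I do not expect a genuine obstacle here: part (ii) is purely formal once one notices that the $T$-part of the action only takes linear combinations, which preserves ``simultaneously diagonal'', and the $S$-part is absorbed into the choice of diagonalizing congruence. The mild subtlety — the only place requiring a word of justification — is in part (i), namely that a rank-one complex symmetric matrix is congruent to $\diag(1,0,0)$ and that two such with non-proportional images can be simultaneously brought to $\diag(1,0,0)$ and $\diag(0,1,0)$; this is where I would spend a sentence, invoking the classification of symmetric bilinear forms over $\C$.
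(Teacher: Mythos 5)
Your proof is correct, and both halves take a genuinely different route from the paper's. For (i), the paper first normalizes $\Gamma^1$ to $\I_1$ by congruence, writes $\Gamma^2$ in block form $\begin{psmallmatrix}\alpha & w^t\\ w & A\end{psmallmatrix}$, uses the vanishing of the $2\times 2$ minors to put $A$ in the form $\diag(\epsilon,0)$, and then applies a block-triangular congruence $\begin{psmallmatrix}1&0\\ R^tw & R\end{psmallmatrix}$ that fixes $\I_1$ while cleaning up $\Gamma^2$. Your factorization $\Gamma^i=v_iv_i^t$ replaces all of this with the linear algebra of the two vectors $v_1,v_2$ (a congruence $S$ sends $v_iv_i^t$ to $(S^tv_i)(S^tv_i)^t$, so linearly independent $v_1,v_2$ can be sent to multiples of $e_1,e_2$); moreover, you split off the proportional case explicitly, whereas the paper's stated conclusion ``$S^t\Gamma^1S=\I_1$ and $S^t\Gamma^2S=\I_2$'' silently assumes the images are independent and fails, e.g., for $\Gamma^1=\Gamma^2=\I_1$ --- the lemma's conclusion is of course still trivially true there, but your case division is the more careful one. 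For (ii), the paper argues by contraposition: if $\{\Gamma^1,\Gamma^2\}$ were not simultaneously diagonalizable, some off-diagonal vector $\bigl((S^t\Gamma^1S)_{ij},(S^t\Gamma^2S)_{ij}\bigr)^t$ would be nonzero yet annihilated by the invertible matrix $T$. Your direct computation with $Q=S^{-1}P$, giving $Q^t{\Gamma^\prime}^kQ=T_{k1}P^t\Gamma^1P+T_{k2}P^t\Gamma^2P$, proves the same invariance constructively and isolates the real point --- the $T$-part of the action only takes linear combinations, and the $S$-part is absorbed into the diagonalizing congruence. The two arguments are logically equivalent; yours is arguably cleaner and exhibits the diagonalizing matrix explicitly.
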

 \begin{proof}
In fact, suppose  that $\rk(\Gamma^i)=1$ for $i\in \{ 1,2\}$. Note that there is $S \in \GL_3(\C)$ such that $S^t \Gamma^1S=\I_1$. Thus, we can assume that $\Gamma^1=\I_1$. Now write 
$\Gamma^2 = \begin{psmallmatrix} 
\alpha & w^t\\
w & A
\end{psmallmatrix}, \text{ where }  a \in \C, \; w\in \C^2, \text{ and } A\in\Sym_2(\C). 
$
Since all minors of $\Gamma^2$ are zero, there is $R\in \GL_2(\C)$ such that 
$R^tAR=
\begin{psmallmatrix}
\epsilon & 0 \\
0 & 0
\end{psmallmatrix}
 $, where $\epsilon \in \{0, 1\}$. Hence, taking  
 $S= \begin{psmallmatrix}  
 1 & 0 \\
 R^tw & R \end{psmallmatrix}
 $, it follows that  $S^t \Gamma^1 S=\I_1$ and  $S^t \Gamma^2 S=\I_2$, i.e. $\{ \Gamma^1, \Gamma^2\}$ is simultaneously diagonalizable.  
 
 For (ii), toward a contradiction, suppose that  $\{ \Gamma^1, \Gamma^2\}$ is non-simultaneously diago\-nalizable but $ \{ { \Gamma^\prime}^1, {\Gamma^\prime}^2 \}$ is simultaneously diagonalizable,
 for some  $({ \Gamma^\prime}^1, {\Gamma^\prime}^2  ) \in [ \Gamma^1, \Gamma^2]$. 
 Let   $(T,S)\in \GL_2(\C) \times \GL_3(\C)$ be such that  $({ \Gamma^\prime}^1, {\Gamma^\prime}^2  ) = (T,S)\cdot (  \Gamma^1, \Gamma^2)$. Thus, there are $i,j \in \{1,2,3 \}$ with $i<j$ such that 
$$
\begin{pmatrix}
(S^t\Gamma^1S)_{ij}\\
(S^t\Gamma^2S)_{ij}
\end{pmatrix} \neq 
\begin{pmatrix}
0 \\
0
\end{pmatrix}
,\text{ and } \; 
T
\begin{pmatrix}
(S^t\Gamma^1S)_{ij}\\
(S^t\Gamma^2S)_{ij}
\end{pmatrix}
=
\begin{pmatrix}
0 \\
0
\end{pmatrix}. 
$$
\end{proof}

Recall that complex symmetric matrices may be non-dia\-go\-na\-li\-za\-ble by orthogo\-nal ones; for such matrices it is convenient to use the Symmetric Normal Form given in \cite{Cr}. This allows us to write a non-diagonalizable matrix  in a convenient form,  as the following result shows  for $n\in \{2,3\}$
(see   \cite{Cr}, pag. 348). 

\begin{lemma}[\cite{Cr}]
Let $n\in \{ 2, 3\}$. If  $A\in \Sym_n(\C)$ is non-diagonalizable,  then there exists $S\in \O_n(\C)$ such that the following holds:
\begin{enumerate}[(i)]
\item
For $n=2$;   $\;S^tAS=
\begin{psmallmatrix}
 \lambda+1 & i \\
 i & \lambda -1 
\end{psmallmatrix}
$,  where $\lambda$ is the eigenvalue of $A$.   
\item 
For $n=3$;  $\; S^tAS=
\begin{psmallmatrix}
 \lambda+1 & i & c \\
 i & \lambda -1 & ic\\
 c & ic& \mu 
\end{psmallmatrix}
$,  where $\lambda$ and $\mu$ are the eigenvalues of $A$, and if $\lambda\neq \mu$, then $c=0$. 
\end{enumerate}
\label{cfnon-diagonalizable}
\end{lemma}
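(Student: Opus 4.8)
The statement to prove is Lemma \ref{cfnon-diagonalizable}, which asserts a Symmetric Normal Form for non-diagonalizable complex symmetric matrices in dimensions $2$ and $3$. The plan is to exploit the classification of symmetric matrices under orthogonal congruence (equivalently, under the complex orthogonal group $\O_n(\C)$), following \cite{Cr}. The starting point is the observation that a complex symmetric matrix $A$ fails to be orthogonally diagonalizable precisely when its action on $\C^n$, equipped with the standard bilinear (not Hermitian) form $\langle x,y\rangle = x^ty$, admits an isotropic invariant subspace on which $A$ is not semisimple — in other words, when the bilinear form degenerates on some generalized eigenspace. So the first step is to reduce, via the primary decomposition of $A$, to a single generalized eigenspace and to analyze the Jordan structure there.

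First I would treat the case $n=2$. Here non-diagonalizability forces $A$ to have a single eigenvalue $\lambda$ with a single $2\times 2$ Jordan block, so $A - \lambda I$ is nilpotent of rank $1$. I would pick a vector $v$ with $(A-\lambda I)v \neq 0$ and set $w = (A-\lambda I)v$; since $A$ is symmetric and $w$ lies in the image of the nilpotent part, one checks $\langle w,w\rangle = 0$, so $w$ is isotropic. Rescaling $v$ and adjusting it by a multiple of $w$, one arranges $\langle v,w\rangle$ and $\langle v,v\rangle$ to have prescribed values; the explicit target $\begin{psmallmatrix}\lambda+1 & i\\ i & \lambda-1\end{psmallmatrix}$ is exactly what comes out of choosing an orthonormal (in the bilinear sense) basis adapted to this isotropic line, and one verifies that the required change of basis matrix can be taken in $\O_2(\C)$ by a dimension/normalization count. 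This is essentially a normal-form computation for a single nilpotent Jordan block with respect to a bilinear form, and it is routine once the isotropy of $w$ is in hand.

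For $n=3$, I would split into two sub-cases according to the eigenvalue multiplicities. If $A$ has two distinct eigenvalues $\lambda \neq \mu$, then non-diagonalizability must come from a $2\times 2$ Jordan block at $\lambda$ together with a $1$-dimensional eigenspace at $\mu$; since these generalized eigenspaces are orthogonal with respect to the bilinear form (standard for symmetric operators with distinct eigenvalues) and the $\mu$-eigenspace is non-isotropic, one applies the $n=2$ result inside the $\lambda$-part and normalizes the $\mu$-vector, producing the block form with $c=0$. If $A$ has a single eigenvalue of algebraic multiplicity $3$ with a $2+1$ Jordan type (the $3\times 3$ single-block case and the semisimple case being excluded by hypothesis), one runs the same isotropic-vector argument but now the leftover $1$-dimensional direction need not be orthogonal to the block, which is where the off-diagonal parameter $c$ (and the forced pattern $ic$ dictated by symmetry and isotropy) appears. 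I expect the main obstacle to be the bookkeeping in this last sub-case: ensuring that after extracting the Jordan-block normal form one still has enough freedom in $\O_3(\C)$ to bring the remaining entries to the stated shape, and checking that the coupling constant genuinely cannot be removed when the eigenvalues coincide. Since the statement is quoted verbatim from \cite{Cr}, I would in practice simply cite that reference for the detailed verification and only indicate the structural reduction above.
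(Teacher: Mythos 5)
The paper itself contains no proof of this lemma: it is imported verbatim from \cite{Cr} (with the pointer ``pag.~348''), so there is no internal argument to compare yours against, and your sketch has to stand on its own. Your $n=2$ argument is correct (with $N=A-\lambda\,\id_2$ nilpotent symmetric of rank one, $N=ww^t$ with $w^tw=0$, and $\O_2(\C)$ acting transitively on nonzero isotropic vectors, which sends $w$ to $(1,i)^t$ and $N$ to $\begin{psmallmatrix}1&i\\ i&-1\end{psmallmatrix}$), and so is your first $n=3$ sub-case, where the two generalized eigenspaces are orthogonal and nondegenerate and the $n=2$ normal form is applied inside the $\lambda$-block.

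The genuine gap is in your second $n=3$ sub-case. You assert that when $A$ has a single eigenvalue of multiplicity $3$ the Jordan type must be $2+1$, ``the $3\times3$ single-block case \ldots being excluded by hypothesis.'' It is not: the hypothesis only excludes diagonalizable matrices, and a complex symmetric matrix realizes \emph{every} Jordan form, in particular a single block $J_3(\lambda)$. Indeed, the normal form in (ii) with $\mu=\lambda$ and $c\neq0$ \emph{is} that case: its nilpotent part $N$ satisfies $N^2=c^2uu^t$ with $u=(1,i,0)^t$ and $N^3=0$, so $N$ is a single size-$3$ block precisely when $c\neq0$, whereas the $2+1$ type forces $N^2=0$ and hence $c=0$. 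So your sketch omits exactly the one configuration in which the coupling constant $c$ is nonzero, and it misattributes $c$ to the $2+1$ case (where the computation above shows it must vanish; there the normal form is just $\lambda\,\id_3+uu^t$, obtained again from transitivity of $\O_3(\C)$ on isotropic vectors). To close the gap you must treat $N$ nilpotent symmetric with $N^2\neq0$, $N^3=0$ separately: here $N^2v$ is isotropic and orthogonal to $Nv$ for any $v$, but $(Nv)^t(Nv)=v^tN^2v$ need not vanish, and it is this surviving pairing that produces the off-diagonal entries $c$, $ic$ after normalization. Since the lemma is anyway being cited from \cite{Cr}, deferring the verification to that reference is legitimate, but the structural reduction you offer in its place is not correct as stated.
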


\medskip

\begin{proposition}Let $(\Gamma^1, \Gamma^2)$ be such that $\{ \Gamma^1, \Gamma^2 \}$ is non-simultaneously diago\-nalizable. Then, $(\Gamma^1, \Gamma^2)$  belongs in one and only one of the following orbits:

\begin{centering}
{\setstretch{1.8}
\begin{tabular}{llcll}
$(2|3)_7$: &
 $\left [
 \begin{psmallmatrix}
 K & 0 \\
 0 & 0 
\end{psmallmatrix}, 
 \begin{psmallmatrix}
 L & 0 \\
 0 & 0 
\end{psmallmatrix}  
\right ]$, & $\quad $&
$(2|3)_{9}$: &
 $\left [
 \begin{psmallmatrix}
 K & 0 \\
 0 & 1 
\end{psmallmatrix}, 
 \begin{psmallmatrix}
 L & 0 \\
 0 & 0 
\end{psmallmatrix}  
\right ]$,\\
 $(2|3)_{8}$:&
 $\left [
 \begin{psmallmatrix}
 K & 0 \\
 0 & 0 
\end{psmallmatrix},
 \begin{psmallmatrix}
 L & u_0 \\
 u_0^t & 0 
\end{psmallmatrix}  
\right ]$,&&$(2|3)_{10}$:&
 $\left [
 \begin{psmallmatrix}
 K & 0 \\
 0 & 1 
\end{psmallmatrix}, 
 \begin{psmallmatrix}
 L & 0 \\
 0 & 1 
\end{psmallmatrix}  
\right ]$,
\\
&&& $(2|3)_{11}$:&
 $\left [
 \begin{psmallmatrix}
 K & 0 \\
 0 & 1 
\end{psmallmatrix}
 \begin{psmallmatrix}
 L & u_0 \\
 u_0^t & 0
\end{psmallmatrix}  
\right ]$,
\end{tabular}
}
\end{centering}

where: 
$$K=\begin{psmallmatrix}
0 & 1 \\
1& 0
\end{psmallmatrix}, \quad L=\begin{psmallmatrix}
0 & 0 \\
0& 2
\end{psmallmatrix}, \quad  u_0 = \begin{psmallmatrix}
 0 \\
1
\end{psmallmatrix}.$$

\end{proposition}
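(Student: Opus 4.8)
The plan is to classify pairs $(\Gamma^1,\Gamma^2)$ of complex symmetric $3\times 3$ matrices that are \emph{not} simultaneously diagonalizable, up to the action \eqref{action23}, by reducing to a manageable normal form. First I would invoke Lemma \ref{rank1sd}(i): since $\{\Gamma^1,\Gamma^2\}$ is non-simultaneously diagonalizable, we must have $\rk(\Gamma^i)\geq 2$ for at least one $i$, and in fact the failure of simultaneous diagonalizability forces a genuinely non-diagonalizable element somewhere in the pencil $\{\alpha\Gamma^1+\beta\Gamma^2\}$. Because the $T$-action mixes $\Gamma^1$ and $\Gamma^2$ by an arbitrary element of $\GL_2(\C)$, I may replace $(\Gamma^1,\Gamma^2)$ by any basis of the pencil it spans; so I would first choose the basis so that one of the two matrices, say $\Gamma^1$, is non-diagonalizable and has as simple a Jordan-type structure as possible.

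Next I would apply Lemma \ref{cfnon-diagonalizable}(ii) (the Symmetric Normal Form of Craven) to put $\Gamma^1$ into the stated normal form using an orthogonal $S$; the remaining freedom is the stabilizer of that normal form inside $\O_3(\C)$ together with the $\GL_2(\C)$-freedom on the pencil and scalings. A change of variables of the form $S=\mathrm{diag}$-type plus the congruence by $\O_2(\C)\times\{1\}$ should bring the non-diagonalizable $2\times 2$ block of $\Gamma^1$ into the shape $K=\begin{psmallmatrix}0&1\\1&0\end{psmallmatrix}$, which (being rank $2$, nilpotent-type in the pencil sense) is the reason the pair fails to be simultaneously diagonalizable: $K$ is not congruent to a diagonal matrix by a matrix that simultaneously diagonalizes anything transverse to it. Having normalized $\Gamma^1$ to $\begin{psmallmatrix}K&0\\0&\epsilon\end{psmallmatrix}$ with $\epsilon\in\{0,1\}$ (the two families $(2|3)_7,(2|3)_8$ versus $(2|3)_9,(2|3)_{10},(2|3)_{11}$), I would then run through the possibilities for $\Gamma^2$ using the residual congruences that fix $\Gamma^1$. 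Writing $\Gamma^2=\begin{psmallmatrix}P&v\\v^t&d\end{psmallmatrix}$, the condition that the pair not be simultaneously diagonalizable, together with the symmetric-pencil reductions, pins $P$ down to $L=\begin{psmallmatrix}0&0\\0&2\end{psmallmatrix}$ (after subtracting a multiple of $\Gamma^1$ and rescaling), and then the off-diagonal vector $v$ is either $0$ or can be normalized to $u_0=\begin{psmallmatrix}0\\1\end{psmallmatrix}$, while $d$ matches $\epsilon$ up to the allowed operations.

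The main obstacle I expect is the bookkeeping of the residual symmetry group at each stage: after fixing $\Gamma^1$ in normal form, the subgroup of $\GL_2(\C)\times\GL_3(\C)$ preserving it is not the full orthogonal group, and one has to verify that the remaining parameters in $\Gamma^2$ can all be eliminated or normalized to exactly the discrete set listed, and that distinct entries in the list are genuinely in distinct orbits. For the last point — distinctness — I would appeal to the invariants from Lemma \ref{lem:invariants} applied to the resulting superalgebras (ranks of $\Gamma^i$ and of generic pencil members, dimensions of centers, the number of simultaneously diagonalizable matrices in $\{\Gamma^1,\Gamma^2\}$, which Lemma \ref{rank1sd}(ii) shows is an orbit invariant), rather than by a direct orbit-separation computation. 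The exhaustiveness — that these are \emph{all} the non-simultaneously-diagonalizable orbits — follows by tracking the case split (rank of $\Gamma^1$, value of $\epsilon$, vanishing of $v$) to its conclusion and checking no case was missed; I would present this as a short finite case analysis referencing \cite{Cr} and \cite{Her} for the normal-form inputs.
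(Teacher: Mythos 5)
Your toolbox is the right one (Lemma \ref{rank1sd}, Craven's normal form, explicit congruences, invariants to separate orbits), but the reduction you describe breaks down at its central step and omits the case division that actually drives the paper's proof. The paper splits according to whether $\Span\{\Gamma^1,\Gamma^2\}\cap\GL_3(\C)$ is empty or not, and accordingly normalizes the pair to $(\id_3,\Gamma^3)$ or to $(\I_1+\I_2,\Gamma^3)$ by a general congruence; the point is that the stabilizer of the first component is then a concrete group ($\O_3(\C)$, resp.\ a block subgroup) inside which Lemma \ref{cfnon-diagonalizable} is applied to the \emph{second} component, and the explicit operators $T_\lambda$, $R$, $S_0$ finish the reduction. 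You instead anchor everything on a non-diagonalizable element of the pencil. Two problems follow. First, the existence of such an element is asserted (``the failure of simultaneous diagonalizability forces a genuinely non-diagonalizable element somewhere in the pencil'') but never proved, and Lemma \ref{rank1sd} does not give it to you; the paper never needs this statement. Second, and fatally, after putting your non-diagonalizable $\Gamma^1$ into Craven form you propose to bring its $2\times2$ block $\Delta(\lambda)$ to $K$ by $\O_2(\C)$-congruence and diagonal scalings. This is impossible: for $S\in\O_n(\C)$ one has $S^tAS=S^{-1}AS$, so orthogonal congruence preserves the Jordan type, and $\Delta(\lambda)$ is a single $2\times2$ Jordan block while $K$ is diagonalizable with eigenvalues $\pm1$. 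In the listed representatives the first entries $\begin{psmallmatrix}K&0\\0&\epsilon\end{psmallmatrix}$ are congruent to $\id_3$ or $\I_1+\I_2$ --- they are the \emph{regular} members of the pencil (in the paper's computation $K=-R^tR$ and $L=(R^tu_1)(R^tu_1)^t$), and the non-diagonalizable members occur only at the isotropic parameters $a^2+b^2=0$ of $aK+bL$. So your plan mis-assigns the roles of the two pencil elements and cannot reach the stated normal forms without switching basepoints in the pencil midway, which you do not do.

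The downstream parts of your outline --- normalizing the off-diagonal vector to $0$ or $u_0$, matching the corner entry to $\epsilon$, and separating the five classes by orbit invariants such as ranks across the pencil and Lemma \ref{rank1sd}(ii) --- are consistent in spirit with the paper's two lemmas (which carry this out by explicit computations of when $\Span\{\cdot,\cdot\}$ meets $\GL_3(\C)$), but they sit on top of the broken normalization step.
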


\medskip

To prove the previous result,  we observe that from Lemma \ref{rank1sd}, it follows that $\rk(\Gamma^i)\geq2$, for some $i\in \{1,2\}$.  Furthermore:
\begin{enumerate}[(i)]
\item  If $ \Span\{ \Gamma^1, \Gamma^2\} \cap \GL_3(\C)\neq \emptyset$, then   $(\Gamma^1, \Gamma^2) \in [\id_3, \Gamma^3]$, for some $\Gamma^3\in \Sym_3(\C)$ non-diagonalizable. 
\item If $\Span \{ \Gamma^1, \Gamma^2 \} \cap \GL_3(\C)=\emptyset$, then $(\Gamma^1, \Gamma^2) \in [\I_1+\I_2, \Gamma^3]$, for some $\Gamma^3\in \Sym_3(\C)$ such that 
$\Span\{\I_1+ \I_2, \Gamma^3\}\cap \GL_3(\C)=\emptyset$ and $\{\I_1+ \I_2, \Gamma^3\}$ is non-simultaneously diagonalizable. 
\end{enumerate}

In the following lemmas concerning the previous cases, we use the notation: 
$
 \Delta(\lambda)= \begin{psmallmatrix} 
\lambda+1  & i\\
i & \lambda -1
\end{psmallmatrix}$, $T_\lambda= \begin{psmallmatrix} 
-1& 0\\
-\lambda & 1
\end{psmallmatrix}$,   with $\lambda\in\C$,   
$ \,u_1 = \begin{psmallmatrix}
 1 \\
i
\end{psmallmatrix}, \;\; 
R= (\sqrt{2}\;)^{-1}  \begin{psmallmatrix} 
1& -1\\
i & i
\end{psmallmatrix}$ and $ 
 S_0= \begin{psmallmatrix} 
R & 0\\
0 & i
\end{psmallmatrix}.
$

\medskip

\begin{lemma}
 Let $\Gamma^3\in \Sym_3(\C)$. If $\Gamma^3$ is non-diagonalizable, then $(\id_3, \Gamma^3)$ belongs to one and only one of the following orbits:
$$ \left [
 \begin{psmallmatrix}
 K & 0 \\
 0 & 1 
\end{psmallmatrix}, 
 \begin{psmallmatrix}
 L & 0 \\
0 & 0
\end{psmallmatrix}  
\right ], \quad 
 \left [
 \begin{psmallmatrix}
 K & 0 \\
 0 & 1 
\end{psmallmatrix}, 
 \begin{psmallmatrix}
 L & 0 \\
 0 & 1
\end{psmallmatrix}  
\right ], \quad 
 \left [
 \begin{psmallmatrix}
 K & 0 \\
 0 & 1 
\end{psmallmatrix}, 
 \begin{psmallmatrix}
 L & u_0 \\
 u_0^t & 0
\end{psmallmatrix}  
\right ].
$$
\end{lemma}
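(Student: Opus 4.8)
## Proof Proposal

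The plan is to reduce the pair $(\id_3, \Gamma^3)$ to a normal form by first normalizing $\Gamma^3$ via Lemma \ref{cfnon-diagonalizable}, and then exploiting the remaining freedom in the $\GL_2(\C)$-factor (which acts by linear combinations on the pair) together with the stabilizer of $\id_3$ in $\O_3(\C)$. Since $\Gamma^3$ is non-diagonalizable, part (ii) of Lemma \ref{cfnon-diagonalizable} gives an $S\in\O_3(\C)$ with
$$
S^t\Gamma^3 S = \begin{psmallmatrix} \lambda+1 & i & c \\ i & \lambda-1 & ic \\ c & ic & \mu \end{psmallmatrix},
$$
where $c=0$ whenever $\lambda\neq\mu$, and $S^t\id_3 S=\id_3$. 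So after this step we may assume $\Gamma^3$ is already in this form while $\Gamma^1=\id_3$ is unchanged.

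Next I would use the $\GL_2(\C)$-action: replacing $(\Gamma^1,\Gamma^2)=(\id_3,\Gamma^3)$ by $(\id_3, \Gamma^3 - \lambda\,\id_3)$ is achieved by $T=\begin{psmallmatrix}1&0\\-\lambda&1\end{psmallmatrix}$ (this is the matrix $T_\lambda$ from the notation), which kills the diagonal shift and leaves
$$
\Gamma^3-\lambda\id_3 = \begin{psmallmatrix} 1 & i & c \\ i & -1 & ic \\ c & ic & \mu-\lambda \end{psmallmatrix}.
$$
Now I split into the two cases dictated by Lemma \ref{cfnon-diagonalizable}(ii). If $\lambda\neq\mu$ then $c=0$ and the matrix is block-diagonal with a $2\times2$ block $\begin{psmallmatrix}1&i\\i&-1\end{psmallmatrix}=K$ (up to the conjugation by $R$ in the notation, which one checks sends $\begin{psmallmatrix}1&i\\i&-1\end{psmallmatrix}$ to $K$ while fixing $\id_2$) and a nonzero scalar $\mu-\lambda$ in the bottom-right corner; rescaling the odd basis vector $f_3$ (an element of $\GL_3(\C)$ fixing the $2\times2$ block appropriately, or a further $\GL_2$ rescaling) normalizes this scalar to $1$, landing in the third listed orbit $[\,(\begin{smallmatrix}K&0\\0&1\end{smallmatrix}),(\begin{smallmatrix}L&u_0\\u_0^t&0\end{smallmatrix})\,]$ or one of the others depending on how the bottom-right entries of $\id_3$ and the transformed $\Gamma^3$ interact. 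If $\lambda=\mu$, the bottom-right corner of $\Gamma^3-\lambda\id_3$ is $0$ and the off-diagonal column is $(c,ic)^t$, which is a multiple of $u_1$; a rotation in the $(e_1,e_2)$-plane fixing $\id_2$ can be used to simplify $c$, and one distinguishes $c=0$ (giving the first orbit, with $L$ in the corner coming from the other basis vector) from $c\neq 0$ (which, after rescaling, gives the $u_0$ orbit). Throughout, after each transformation I rename $S_0$-conjugates to bring the $2\times2$ block into the shape $K$ and the "diagonal-looking" part into $L=\begin{psmallmatrix}0&0\\0&2\end{psmallmatrix}$.

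The main obstacle is bookkeeping: one must track \emph{both} matrices of the pair simultaneously through each change of basis, since the $\GL_2(\C)$-action mixes them and the $\GL_3(\C)$-action acts on both at once, so the allowable transformations at each stage are exactly those in $\Stab(\id_3)\cap(\text{whatever fixes the normal form of }\Gamma^1)$, i.e. essentially $\O_3(\C)$ together with the triangular $\GL_2$-moves. Verifying that the three listed orbits are genuinely distinct — that no further $(T,S)$ merges them — is the second delicate point; this is handled by computing an invariant, e.g. the pencil $\det(\Gamma^1 + x\Gamma^2)$ as a polynomial in $x$ (invariant up to the $\GL_2$-substitution and scaling), or the ranks of generic members of $\Span\{\Gamma^1,\Gamma^2\}$, which separate the cases $\mu\neq\lambda$ versus $\mu=\lambda$ and the presence or absence of the off-diagonal $u_0$. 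I would close the argument by exhibiting these invariants explicitly for the three normal forms and checking they take pairwise different values.
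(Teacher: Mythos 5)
Your reduction is essentially the paper's: normalize $\Gamma^3$ by Lemma \ref{cfnon-diagonalizable}, shear by $T_\lambda$ to replace $\Gamma^3$ with $\Gamma^3-\lambda\id_3$, conjugate by $R\oplus(i)$ to turn $\begin{psmallmatrix}1&i\\i&-1\end{psmallmatrix}$ into $L$ and $\id_2$ into $\pm K$, then rescale; the three cases $(\lambda\neq\mu,\,c=0)$, $(\lambda=\mu,\,c=0)$, $(\lambda=\mu,\,c\neq0)$ give the three normal forms, exactly as in the paper. Two points need repair. First, your case bookkeeping is off: when $\lambda\neq\mu$ (so $c=0$) the pair lands in the \emph{second} orbit $\bigl[\begin{psmallmatrix}K&0\\0&1\end{psmallmatrix},\begin{psmallmatrix}L&0\\0&1\end{psmallmatrix}\bigr]$, not the third; the $u_0$ orbit arises only from $\lambda=\mu$, $c\neq0$. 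The hedge ``or one of the others depending on how the bottom-right entries interact'' signals that this step was not actually carried out. Second, and more substantively, the distinctness argument you propose does not work as stated: for both the first and the third normal forms one computes $\det(x\Gamma^1+y\Gamma^2)=-x^3$, and the generic member of the pencil has rank $3$ in all three cases, so neither the binary cubic $\det$ nor generic rank separates orbits one and three. A finer invariant is needed, e.g.\ the minimal rank attained on $\Span\{\Gamma^1,\Gamma^2\}\setminus\{0\}$, which is $1$ for the first form and $2$ for the third (the second form is already separated by its cubic $-x^2(x+y)$ having two distinct roots). To be fair, the paper's own proof establishes only that $(\id_3,\Gamma^3)$ lands in one of the three orbits and leaves disjointness implicit, so you are attempting more than the paper does here---but the specific invariants you name are insufficient for the claim.
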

\begin{proof}
From Lemma \ref{cfnon-diagonalizable} there exists $S\in \O_3(\C)$ such that 
$
S^t \Gamma^3 S = 
\begin{psmallmatrix}
\Delta(\lambda) & cu_1 \\
cu_1^t & \mu 
\end{psmallmatrix}.$
for some $\lambda, \mu, c\in \C$. Then, 
$$
(T_\lambda,  SS_0 )\cdot  (\id_3, \, \Gamma^3)
= 
\left(
\begin{psmallmatrix}
K & 0\\
0 & 1
\end{psmallmatrix},
\; 
\begin{psmallmatrix}
L  &   -i \sqrt{2} c u_1 \\
-i\sqrt{2}c u_1^t& \lambda- \mu
\end{psmallmatrix}   
\right).    
$$
Finally, 
taking 
$$
T^\prime=
\begin{psmallmatrix}
\gamma^2 & 0 \\
0 & 1
\end{psmallmatrix} \text{ and } 
S^\prime= 
\begin{psmallmatrix}
\frac{1}{\gamma^2} & 0 & 0\\
0& 1 &0 \\
0 & 0& \frac{1}{\gamma}
\end{psmallmatrix}, \text{ where }
\gamma=\begin{cases}
\sqrt{\lambda-\mu} & \text{ if } \lambda-\mu \neq0,\\
-i\sqrt{2}c & \text{ if } c\neq 0,
\end{cases} 
$$
it follows that 
$$
(T^\prime, S^\prime)\cdot  
\left(
\begin{psmallmatrix}
K & 0\\
0& 1
\end{psmallmatrix},
\; 
\begin{psmallmatrix}
L&   -i\sqrt{2}c u_1\\
-i\sqrt{2}c u^t_1  & \lambda-\mu
\end{psmallmatrix}
\right)
= 
\begin{cases}
\left(
\begin{psmallmatrix}
K & 0\\
0 & 1
\end{psmallmatrix},
\; 
\begin{psmallmatrix}
L & 0\\
0 & 1
\end{psmallmatrix}
\right),
\\
\left(
\begin{psmallmatrix}
K & 0\\
0 & 1
\end{psmallmatrix},
\; 
\begin{psmallmatrix}
L & u_0\\
u_0& 1
\end{psmallmatrix}
\right).
\end{cases}
$$
respectively. 
\end{proof}
\begin{lemma}
 Let $\Gamma^3\in \Sym_3(\C)$. If  $\Span\{\I_1+ \I_2, \Gamma^3\}\cap \GL_3(\C)=\emptyset$ and $\{\I_1+ \I_2, \Gamma^3\}$ is non-simultaneously diagonalizable, then  
 $(\I_1+\I_2, \Gamma^3)$ belongs to one and only one of the following orbits:
$$
 \left [
 \begin{psmallmatrix}
 K & 0 \\
 0 & 0
\end{psmallmatrix}, 
 \begin{psmallmatrix}
 L & 0 \\
 0 & 0
\end{psmallmatrix}  
\right ], \quad 
 \left [
 \begin{psmallmatrix}
 K & 0 \\
 0 & 0 
\end{psmallmatrix}, 
 \begin{psmallmatrix}
 L & u_0 \\
 u_0^t & 0
\end{psmallmatrix}  
\right ].
$$
\end{lemma}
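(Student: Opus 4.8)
The plan is to carry out, for the pencil $\{\I_1+\I_2,\Gamma^3\}$, an explicit reduction of the same flavour as in the previous lemma; the difference is that, since $\Span\{\I_1+\I_2,\Gamma^3\}\cap\GL_3(\C)=\emptyset$, one cannot start from Crawley's $3\times 3$ normal form but instead works with the block decomposition of $\Gamma^3$ along $\ker(\I_1+\I_2)$. Since $\I_1+\I_2=\begin{psmallmatrix}\id_2&0\\0&0\end{psmallmatrix}$ has kernel $\C e_3$, write $\Gamma^3=\begin{psmallmatrix}B&w\\ w^t&d\end{psmallmatrix}$ with $B\in\Sym_2(\C)$, $w\in\C^2$, $d\in\C$. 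The first step is to read off what the hypothesis imposes on $(B,w,d)$: it forces the one-variable polynomial $t\mapsto\det\bigl((\I_1+\I_2)+t\Gamma^3\bigr)$ to vanish identically. Expanding it (Jacobi's formula gives the coefficient of $t$, and an adjugate/Schur expansion of the top-left block the coefficients of $t^2$ and $t^3$), the successive coefficients yield $d=0$, then $w^tw=0$, then $w^tBw=0$. Hence $\Gamma^3=\begin{psmallmatrix}B&w\\ w^t&0\end{psmallmatrix}$ with $w$ isotropic and $w^tBw=0$.

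Next I would split into the cases $w=0$ and $w\neq 0$, and show they match the two listed orbits. If $w=0$ then $\Gamma^3=\begin{psmallmatrix}B&0\\0&0\end{psmallmatrix}$; were $B$ diagonalizable it would in fact be orthogonally diagonalizable (a $2\times 2$ symmetric matrix with distinct eigenvalues has no isotropic eigenvector, and a diagonalizable one with a repeated eigenvalue is scalar), and then conjugating the pair by $\begin{psmallmatrix}M&0\\0&1\end{psmallmatrix}$ with $M\in\O_2(\C)$ would make both $\I_1+\I_2$ and $\Gamma^3$ diagonal, contrary to hypothesis. So the hypothesis leaves exactly two possibilities: either $w=0$ with $B$ non-diagonalizable, or $w\neq 0$.

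In the first possibility I would use Lemma \ref{cfnon-diagonalizable}(i) to get $M\in\O_2(\C)$ with $M^tBM=\Delta(\lambda)=\lambda\,\id_2+N$, where $N:=\begin{psmallmatrix}1&i\\ i&-1\end{psmallmatrix}=\Delta(0)$; then, using $R^t\id_2R=-K$ and $R^tNR=L$, a direct check shows that acting by $\bigl(T_\lambda,\ \begin{psmallmatrix}MR&0\\0&i\end{psmallmatrix}\bigr)$ sends $(\I_1+\I_2,\Gamma^3)$ to $\bigl(\begin{psmallmatrix}K&0\\0&0\end{psmallmatrix},\begin{psmallmatrix}L&0\\0&0\end{psmallmatrix}\bigr)$, the first orbit. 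In the second possibility, since $\O_2(\C)\times\C^\ast$ acts transitively on the nonzero isotropic vectors of $\C^2$ (complex rotations scale each isotropic line and a reflection swaps the two lines), conjugation by a block-diagonal $\begin{psmallmatrix}M&0\\0&s\end{psmallmatrix}$ with $M\in\O_2(\C)$ brings $w$ to $u_1$ while fixing $\I_1+\I_2$. The key point is then that the hyperplane $\{B\in\Sym_2(\C):u_1^tBu_1=0\}$ equals $\{u_1c^t+cu_1^t:c\in\C^2\}$ — both are $2$-dimensional and the latter lies in the former because $u_1^tu_1=0$ — and conjugation by $\begin{psmallmatrix}\id_2&0\\ c^t&1\end{psmallmatrix}$ replaces $B$ by $B+u_1c^t+cu_1^t$ without changing $\I_1+\I_2$ or $w$; hence a suitable $c$ makes $B$ vanish, bringing the pair to $\bigl(\begin{psmallmatrix}\id_2&0\\0&0\end{psmallmatrix},\begin{psmallmatrix}0&u_1\\ u_1^t&0\end{psmallmatrix}\bigr)$. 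A short computation of the same type (transform $\begin{psmallmatrix}K&0\\0&0\end{psmallmatrix}$ back to $\begin{psmallmatrix}\id_2&0\\0&0\end{psmallmatrix}$ with $R$, note that the resulting $B$-block $-R^{-t}LR^{-1}=-N$ again lies in $\{u_1c^t+cu_1^t\}$ and kill it, then rescale with a diagonal $T$) identifies this pair with $\bigl(\begin{psmallmatrix}K&0\\0&0\end{psmallmatrix},\begin{psmallmatrix}L&u_0\\ u_0^t&0\end{psmallmatrix}\bigr)$, the second orbit.

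Finally I would check that the two orbits are genuinely distinct: $\dim\bigl(\ker\Gamma^1\cap\ker\Gamma^2\bigr)$ is a $G$-invariant, equal to $1$ for the first pair (both kernels contain $e_3$) and to $0$ for the second (there $\Gamma^2e_3\neq 0$), so no pair can lie in both orbits. I expect the only mildly delicate point to be the determinant expansion in the first step — one must extract $d=0$, $w^tw=0$, $w^tBw=0$ in that order — together with the bookkeeping needed in the case $w\neq 0$ to be sure no parameter of $B$ survives; conceptually there is no real obstacle, since everything rests on the elementary identities $u_1^tu_1=0$, $R^t\id_2R=-K$ and $R^tNR=L$ already used in the non-diagonalizable case.
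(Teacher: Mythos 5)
Your proposal is correct, and it reorganizes the argument in a way that differs genuinely from the paper's proof. The paper also starts from the block decomposition $\Gamma^3=\begin{psmallmatrix}B&w\\ w^t&b\end{psmallmatrix}$, but it first uses a Schur-complement congruence to reduce to $bw=0$, then splits according to whether $B$ is diagonalizable, invoking the span condition only inside each subcase (for diagonalizable $B$ it forces $B$ scalar and $w$ isotropic, hence the second orbit; for non-diagonalizable $B$ it forces $b=0$ when $w=0$, giving the first orbit, and $(A_0R)^tw\in\C u_0$ when $w\neq0$, giving the second). You instead extract all consequences of $\Span\{\I_1+\I_2,\Gamma^3\}\cap\GL_3(\C)=\emptyset$ up front from the identical vanishing of $\det\bigl((\I_1+\I_2)+t\Gamma^3\bigr)$ — the coefficients do give $d=0$, then $w^tw=0$, then $w^t\operatorname{adj}(B)w=-w^tBw=0$ in that order — and then split on $w=0$ versus $w\neq0$; in the first case non-simultaneous diagonalizability correctly forces $B$ non-diagonalizable (your observation that a diagonalizable $2\times2$ symmetric matrix is orthogonally diagonalizable is right, since an isotropic vector in $\C^2$ spans its own orthogonal complement), and in the second case the identification of $\{B:u_1^tBu_1=0\}$ with $\{u_1c^t+cu_1^t\}$ together with the unipotent congruence cleanly kills $B$. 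This buys a more systematic use of the hypothesis and avoids the paper's separate treatment of diagonalizable $B$ with $w\neq0$. You also supply something the paper omits entirely: a proof that the two orbits are distinct, via the $G$-invariant $\dim(\ker\Gamma^1\cap\ker\Gamma^2)$, which is $1$ for the first representative and $0$ for the second; this is needed for the "one and only one" in the statement. The identities $R^t\id_2R=-K$, $R^tNR=L$ and $N=u_1u_1^t$ that your final normalizations rest on all check out.
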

\begin{proof}
Write
$\Gamma^3= \begin{psmallmatrix}
B & w \\
w^t & b
\end{psmallmatrix}$, where  $B \in \Sym_2(\C)$, $w \in \C^2$,  and $b \in \C$. 
First of all, notice that if $bw\neq 0$, then 
$$
\left (
\begin{psmallmatrix}
\id_2 & 0 \\
0 & 0 
\end{psmallmatrix}, 
\begin{psmallmatrix}
B & w \\
w^t & b
\end{psmallmatrix}
\right ) \in 
\left [
\begin{psmallmatrix}
\id_2 & 0 \\
0 & 0 
\end{psmallmatrix}, 
\begin{psmallmatrix}
B^\prime & 0 \\
0 & b
\end{psmallmatrix}
\right ]
$$
for $B^\prime = B - \frac{1}{b}ww^t  \in \Sym_2(\C)$. In fact, acting on $(\I_1+\I_2, \Gamma^3)$ with $(\id_2, S)$, where  $S=\begin{psmallmatrix}
\id_2 & 0 \\
-\frac{1}{b}  w^t & 1 
\end{psmallmatrix}
$, 
the result is obtained. Hence, we may assume that $bw=0$. 

First take an appropriate $A\in \GL_2(\C)$ for putting  $\{\id_2, B\}$ in canonical form simultaneously. Thus, taking $S=
\begin{psmallmatrix}
A & 0 \\
0 & \alpha
\end{psmallmatrix}
$, for some $\alpha  \in \C \setminus \{ 0\}$, it follows that 
 
\begin{equation}
(\id_2, S)\cdot
 \left ( 
\begin{psmallmatrix}
\id_2 & 0 \\
0 & 0 
\end{psmallmatrix},  \begin{psmallmatrix}
B & w \\
w^t & b
\end{psmallmatrix}
\right )
= 
\left (
\begin{psmallmatrix}
A^tA & 0\\
0 &  0 
\end{psmallmatrix}, 
\begin{psmallmatrix}
A^tBA & \alpha A^t w\\
\alpha w^tA & \alpha^2 b
\end{psmallmatrix}
\right )
\label{blockaction}.
\end{equation}

{\bf Claim.} If $B$ is diagonalizable,  then 
$$ 
\left (
\begin{psmallmatrix}
\id_2 & 0 \\
0 & 0 
\end{psmallmatrix},  \begin{psmallmatrix}
B & w \\
w^t & b
\end{psmallmatrix}
\right ) \in 
\left [
\begin{psmallmatrix}
K & 0 \\
0 & 0 
\end{psmallmatrix},  \begin{psmallmatrix}
L &  u_0 \\
 u_0^t & 0 
\end{psmallmatrix}
\right ]. 
$$ 
In fact, let $A_0\in \O_2(\C)$ such that $A_0^tBA_0=\diag(\lambda, \mu)$. Notice that, since $\{I_1+\I_2, \Gamma^3 \}$ is non-simultaneously diagonalizable, then $w\neq 0$. Hence,  taking $S$ as in \eqref{blockaction} with $A=A_0$ and $\alpha=1$, it follows that:
\begin{equation}
(\id_2, S)\cdot 
\left ( 
\begin{pmatrix}
\id_2 & 0 \\
0 & 0 
\end{pmatrix},  \begin{pmatrix}
B & w \\
w^t & 0
\end{pmatrix}
\right )= 
\left ( 
\begin{pmatrix}
\id_2 & 0 \\
0 & 0 
\end{pmatrix},  \begin{pmatrix}
\diag(\lambda,\mu)&  A_0^tw \\
 w^tA_0 & 0 
\end{pmatrix}
\right ).
\label{blockaction1}
\end{equation}
A straightforward computation shows that: 
$$\Span \left   \{
\begin{psmallmatrix}
\id_2 & 0 \\
0 & 0 \\ 
\end{psmallmatrix}, 
\begin{psmallmatrix}
\diag(\lambda, \mu)& A_0^t w \\
w^t  A_0  & 0  
\end{psmallmatrix} 
\right \} \cap \GL_3(\C) = \emptyset \;
\text{ iff } \lambda=\mu \text{ and } w^tw =0.$$ 
Thus, $B=\lambda \id_2$ and $w^tw=0$. Hence,  there exists  $A_1\in \O_2(\C)$  such that $A_1w=u_1$, (see \cite{Cr}).  Therefore, taking $A_1$ instead of $A_0$ in  \eqref{blockaction1} it follows that 
$$ 
\left (
\begin{psmallmatrix}
\id_2 & 0 \\
0 & 0 
\end{psmallmatrix},  \begin{psmallmatrix}
B & w \\
w^t & b
\end{psmallmatrix}
\right ) \in 
\left [
\begin{psmallmatrix}
\id_2 & 0 \\
0 & 0 
\end{psmallmatrix},  \begin{psmallmatrix}
0 &  u_1 \\
 u_1^t & 0 
\end{psmallmatrix}
\right ] =
\left [
\begin{psmallmatrix}
K & 0 \\
0 & 0 
\end{psmallmatrix},  \begin{psmallmatrix}
L &  u_0 \\
 u_0^t & 0 
\end{psmallmatrix}
\right ]. 
$$ 
The last equality is obtained by acting  with
$\left (
\begin{psmallmatrix}
0 & 2 \\
1 & 0
\end{psmallmatrix}, \;  \frac{1}{2}
\begin{psmallmatrix}
0 & 2 & 1  \\
0 & 0 & i \\
2 & 0 & 0
\end{psmallmatrix}  
\right )  $ on $\left ( \begin{psmallmatrix}
\id_2 & 0 \\
0 & 0 
\end{psmallmatrix},  \begin{psmallmatrix}
0 &  u_1 \\
 u_1^t & 0 
\end{psmallmatrix}
\right )$.

Now assume that $B$ is non diagonalizable. Then, there is $A_0\in \O_2(\C)$ such that $A_0^tBA_0= \Delta ({\lambda} )$, for some $\lambda \in \C$. Take 
 $S=\begin{psmallmatrix}
 A_0R & 0 \\
 0 & \alpha
 \end{psmallmatrix} \in \GL_3(\C)
 $, then
\begin{equation}
(T_\lambda,  S)\cdot 
\left ( 
\begin{psmallmatrix}
\id_2 & 0 \\
0 & 0 
\end{psmallmatrix},  \begin{psmallmatrix}
B & w \\
w^t & b
\end{psmallmatrix}
\right )= 
\left ( 
\begin{psmallmatrix}
K& 0 \\
0 & 0 
\end{psmallmatrix},  \begin{psmallmatrix}
L  &  \alpha (A_0R)^tw \\
\alpha w^tA_0R & \alpha^2b 
\end{psmallmatrix}
\right ).
\label{blockaction2}
\end{equation}
A straightforward computation shows:
\begin{enumerate}[(i)]
\item For $w=0$:
$$\Span \left   \{
\begin{psmallmatrix}
K& 0 \\
0 & 0 \\ 
\end{psmallmatrix}, 
\begin{psmallmatrix}
L &  0 \\
0 & \alpha^2b
\end{psmallmatrix} 
\right \} \cap \GL_3(\C) = \emptyset \;
\text{ iff } b=0.$$ 
\item  For $w\neq 0$:
$$\Span \left   \{
\begin{psmallmatrix}
K& 0 \\
0 & 0 \\ 
\end{psmallmatrix}, 
\begin{psmallmatrix}
L & \alpha (A_0R)^tw  \\
\alpha w^t A_0R & 0
\end{psmallmatrix} 
\right \} \cap \GL_3(\C) = \emptyset \;
\text{ iff }  (A_0R)^tw= \beta u_0, $$
 for some $\beta \in  \C\setminus \{ 0\}$.
\end{enumerate}
Therefore, 
$$ 
\left (
\begin{pmatrix}
\id_2 & 0 \\
0 & 0 
\end{pmatrix},  \begin{pmatrix}
B & 0 \\
0 & 0
\end{pmatrix}
\right ) \in 
\left [
\begin{pmatrix}
K& 0 \\
0 & 0 
\end{pmatrix},  \begin{pmatrix}
L &  0 \\
0 & 0 
\end{pmatrix}
\right ]
$$
 and 
$$ 
\left (
\begin{pmatrix}
\id_2 & 0 \\
0 & 0 
\end{pmatrix},  \begin{pmatrix}
B & w \\
w^t & 0
\end{pmatrix}
\right ) \in 
\left [
\begin{pmatrix}
K& 0 \\
0 & 0 
\end{pmatrix},  \begin{pmatrix}
L &  u_0 \\
 u_0 & 0 
\end{pmatrix}
\right ]. 
$$ 
The last asertion is obtained taking $\alpha= \beta^{-1}$. 
\end{proof}

With all this, we can write the algebraic classification of nilpotent Lie superalgebras of dimension $(2|3)$.

\medskip

\begin{theorem}
Nilpotent Lie superalgebras of dimension $(2|3)$ are, up to isomorphism:
$$
\begin{array}{lllll}
(2|3)_0: &  \[\cdot, \cdot\]=0.&&&\\
(2|3)_1: &  \[ f_1, f_1\]= e_1. &&&\\
(2|3)_2: &  \[ f_1, f_1\]= e_1, & \[f_2, f_2\]=e_1. &&\\
(2|3)_3: &  \[ f_1, f_1\]= e_1, & \[f_2, f_2\]=e_1,& \[f_3, f_3\]=e_1. &\\ 
(2|3)_4: &  \[ f_1, f_1\]= e_1, & \[f_2, f_2\]=e_2.&&\\
(2|3)_5: & \[ f_1, f_1\]= e_1, & \[f_2, f_2\]=e_2, & \[f_3, f_3\]=e_1. &    \\
(2|3)_6: &  \[ f_1, f_1\]= e_1, & \[f_2, f_2\]=e_2, & \[f_3, f_3\]=e_1+e_2. &    \\
(2|3)_7: &  \[ f_1, f_2\]= e_1, & \[f_2, f_2\]=2e_2.&  &\\
(2|3)_8: & \[ f_1, f_2\]= e_1,& \[f_2, f_2\]=2e_2, & \[f_2, f_3\]= e_2. &  \\
(2|3)_{9}: & \[ f_1, f_2\]= e_1,& \[f_2, f_2\]=2e_2, &\[f_3, f_3\]= e_1. &  \\
(2|3)_{10}: &  \[ f_1, f_2\]= e_1,& \[f_2, f_2\]=2e_2, & \[f_3, f_3\]= e_1+e_2. &  \\
(2|3)_{11}: & \[ f_1, f_2\]= e_1, & \[f_2, f_2\]=2e_2, & \[f_2, f_3\] = e_2, & 
\[f_3, f_3\]=  e_1.  \\
(2|3)_{12}: &\[e_1, f_3 \]= f_1.& & & \\
(2|3)_{13}: &\[e_1, f_3 \]= f_1,&\[ f_2, f_2 \]= e_2. & &  \\
(2|3)_{14}: &\[e_1, f_3 \]= f_1,& \[ f_2, f_3 \]= e_2. & & \\
(2|3)_{15}: &\[e_1, f_3 \]= f_1,&\[ f_3, f_3 \]= e_2.& & \\
(2|3)_{16}: &\[e_1, f_3 \]= f_1,&\[ f_2, f_2 \]= e_2,&\[ f_3, f_3  \]= e_2. &\\
(2|3)_{17}: &\[e_1, f_3 \]= f_1,&\[e_2, f_2 \]= f_1. &  &\\
(2|3)_{18}:&\[e_1, f_3 \]= f_1, &\[e_2, f_2 \]= f_1, & \[ f_2, f_2 \] = 2 e_1, &\[f_2, f_3\]= -e_2.   \\
(2|3)_{19}:&\[e_1, f_3 \]= f_1, &\[e_2, f_2 \]= f_1, & \[ f_2, f_3 \] = -  e_1, & \[f_3, f_3\]= 2e_2.  \\
(2|3)_{20}:& \[e_1, f_3 \]= f_1, & \[e_2, f_3 \]= f_2. & \\
(2|3)_{21}:& \[e_1, f_2\]= f_1, &\[e_1, f_3\]= f_2. &                         &  \\ 
(2|3)_{22}:&\[e_1, f_2\]= f_1, & \[e_1, f_3\]= f_2, &\[f_3, f_3\]=e_2. & \\ 
(2|3)_{23}:&\[e_1, f_2\]= f_1, & \[e_1, f_3\]= f_2, & \[f_1, f_3 \]=-e_2, & \[f_2, f_2 \]=e_2.  \\ 
(2|3)_{24}:&\[e_1, f_2\]= f_1,& \[e_1, f_3\]= f_2, & \[e_2, f_3\]= f_1.    &\\
\end{array}
$$
\end{theorem}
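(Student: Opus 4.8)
The plan is to carry out the three-step procedure of Section~3. Let $\g=\g_0\oplus\g_1\in\NLS$ be of dimension $(2|3)$. Since a $2$-dimensional nilpotent Lie algebra is abelian, we may take $\g_0=\Span\{e_1,e_2\}$ with $[\cdot,\cdot]\equiv 0$; then $\g_1=\Span\{f_1,f_2,f_3\}$ is a $\g_0$-module on which $e_1,e_2$ act by commuting nilpotent endomorphisms $\rho(e_1),\rho(e_2)$, and the only remaining datum is the symmetric bilinear map $\Gamma=(\Gamma^1,\Gamma^2)$ with $\Gamma^k\in\Sym_3(\C)$. Because $[\g_0,\g_0]=0$, identity (J1) reduces to $\Gamma(\rho(x)u,v)+\Gamma(u,\rho(x)v)=0$ and (J2) to $\rho(\Gamma(u,v))w+\rho(\Gamma(v,w))u+\rho(\Gamma(u,w))v=0$, while the whole group $\GL_2(\C)\times\GL_3(\C)$ acts on the pair $(\rho,\Gamma)$ by \eqref{action}. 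I would split the argument according to whether $\rho\equiv0$.

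If $\rho\equiv0$, then (J1) and (J2) hold trivially, $\g$ is completely determined by $\Gamma$, and the isomorphism problem becomes exactly the classification of pairs $(\Gamma^1,\Gamma^2)$ under the action \eqref{action23}. Here I would invoke the results already established in this section: by Lemma~\ref{rank1sd}, simultaneous diagonalizability of $\{\Gamma^1,\Gamma^2\}$ is an invariant of the orbit, and it holds in particular whenever $\rk\Gamma^i\le1$ for $i\in\{1,2\}$. In the simultaneously diagonalizable case, the classification Proposition above for that case yields the orbits $(2|3)_0,\dots,(2|3)_6$; in the non-simultaneously diagonalizable case, the reduction to the Symmetric Normal Form (Lemma~\ref{cfnon-diagonalizable}) together with the two accompanying lemmas --- treating the sub-cases $\Span\{\Gamma^1,\Gamma^2\}\cap\GL_3(\C)\neq\emptyset$ and $\Span\{\Gamma^1,\Gamma^2\}\cap\GL_3(\C)=\emptyset$ --- yields the orbits $(2|3)_7,\dots,(2|3)_{11}$. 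This exhausts the case $\rho\equiv0$.

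If $\rho\not\equiv0$, I would first classify the $\g_0$-module structure of $\g_1$, that is, the pair of commuting nilpotent $3\times3$ complex matrices $(\rho(e_1),\rho(e_2))$, up to simultaneous conjugation and up to linear changes of $(e_1,e_2)$. A short analysis --- on the Jordan type of a generic element of $\rho(\g_0)$, on $\dim\rho(\g_0)\in\{1,2\}$, and on the minimal number of module generators --- shows that there are exactly five such module types, namely those carried by $(2|3)_{12}$, $(2|3)_{17}$, $(2|3)_{20}$, $(2|3)_{21}$ and $(2|3)_{24}$ (each taken with $\Gamma=0$). For each of these five modules I would then write down the general $\Gamma\in\Sym_3(\C)\times\Sym_3(\C)$, impose (J1) --- which, since $[\g_0,\g_0]=0$, only restricts which entries of the $\Gamma^k$ may be nonzero --- then impose (J2), solve the resulting linear system, and finally reduce the surviving freedom modulo the stabilizer of $\rho$ inside $\GL_2(\C)\times\GL_3(\C)$, choosing canonical representatives. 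Running through the five modules produces $(2|3)_{12}$ through $(2|3)_{24}$.

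It remains to verify that the $25$ listed superalgebras are pairwise non-isomorphic. They are first separated by the value of $\rho$ (zero or not) and, within the nonzero case, by the five module types; inside each of these blocks they are distinguished by the invariants of Lemma~\ref{lem:invariants}: whether $\Gamma\equiv0$ (item~(\ref{inv:gamma})), the dimensions $\dim\z(\g)_i$ and $\dim\[\g,\g\]_i$, the ranks of $\Gamma^1$, $\Gamma^2$ and of $\Span\{\Gamma^1,\Gamma^2\}$, and, for the last few stubborn pairs, the dimensions of the spaces of $(\alpha,\beta,\gamma)$-derivations. I expect the main obstacle to be the nonzero-$\rho$ analysis, and in particular those module types supporting several non-isomorphic brackets (the ones underlying $(2|3)_{12}$--$(2|3)_{16}$, $(2|3)_{17}$--$(2|3)_{19}$ and $(2|3)_{21}$--$(2|3)_{23}$): there the stabilizer of $\rho$ is a proper, typically block-triangular, subgroup of $\GL_2(\C)\times\GL_3(\C)$, so the orbit reduction is genuinely more delicate than a standard congruence normal form and requires care with the square-root and $i=\sqrt{-1}$ normalizations already visible in the $\rho\equiv0$ computations above.
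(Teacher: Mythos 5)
Your proposal is correct and follows essentially the same route as the paper: reduce to the triple $([\cdot,\cdot],\rho,\Gamma)$ with $\g_0$ abelian, handle the $\rho\equiv0$ case via the simultaneous-diagonalizability dichotomy and the Symmetric Normal Form lemmas (exactly the content of Section 7.3), and treat $\rho\not\equiv0$ by first classifying the five nilpotent $\g_0$-module structures on $\g_1$ and then solving (J1)--(J2) modulo the stabilizer of $\rho$. Your identification of the five module types (those underlying $(2|3)_{12}$, $(2|3)_{17}$, $(2|3)_{20}$, $(2|3)_{21}$, $(2|3)_{24}$) is accurate, and you actually spell out the $\rho\not\equiv0$ step in slightly more detail than the paper, which only presents the $\rho\equiv0$ computation explicitly.
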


\medskip

\begin{remark}
Lie superalgebras $(2|3)_{14}$, $(2|3)_{18}$, $(2|3)_{19}$, $(2|3)_{22}$, $(2|3)_{23}$ are missing in the classification given in \cite{He}. On the other hand,  Matiadou and Fe\-llouris provided in \cite{MF} a classification of 5-dimensional Lie superalgebras over $\C$. Their classification includes, for dimension $(2|3)$, three parametric families which are in fact not infinite families but rather four isomorphism classes of Lie superalgebras. 
The isomorphisms between our classification and that of Matiadou and Fellouris are the following:

\begin{enumerate}[(i)]
%\item $(2A_{1,1}+3A)^4\simeq (2|3)_{11}$. 
\item $(2A_{1,1}+3A)^5 \simeq (2|3)_{10}$,  for $\lambda \in \C \setminus \{ 0,  i \}$.
\item $(2A_{1,1}+3A)^5  \simeq (2|3)_{11}$,  for $\lambda=i$.
\item $(2A_{1,1}+3A)^6  \simeq (2|3)_{6}$,  for $\kappa, \lambda   \in \C \setminus\{0\}$  such that $(1-\kappa^2-\lambda^2)^2-4\lambda^2\neq 0$.
\item $(2A_{1,1}+3A)^6  \simeq (2|3)_{9}$,  for  $\kappa, \lambda    \in \C \setminus\{0\}$ such that $(1-\kappa^2-\lambda^2)^2-4\lambda^2 =0$.
\item $(2A_{1,1}+3A)^7  \simeq (2|3)_{6}$,  for $\lambda  \in \C \setminus\{0,  \pm\frac{ i}{2}\}$.
\item $(2A_{1,1}+3A)^7  \simeq (2|3)_{9}$, for $\lambda = \pm\frac{i}{2}$. 
\end{enumerate} 
\label{cotejo2-3}
Moreover, Lie superalgebras $(2|3)_{14}$, $(2|3)_{18}$ and $(2|3)_{19}$ are also missing.
\end{remark}

\begin{proposition}\label{prop:rig_23}
The following Lie superalgebras are rigid in the variety $\mathcal{N}_{(2|3)}$: $(2|3)_6$, $(2|3)_{18}$, $(2|3)_{19}$, $(2|3)_{23}$ and $(2|3)_{24}$.
\end{proposition}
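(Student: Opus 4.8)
The plan is to follow the strategy already used in Propositions~\ref{prop:rigid2}, \ref{prop:rigid3}, \ref{prop:rigid4}, \ref{prop:rigid51} and~\ref{prop:rigid52}: for each $\g\in\{(2|3)_6,(2|3)_{18},(2|3)_{19},(2|3)_{23},(2|3)_{24}\}$ we compute the even part $(H^2(\g,\g))_0$ of the second cohomology group, which parametrizes the infinitesimal even deformations of $\g$ inside $\mathcal{LS}_{(2|3)}$, and then, for those superalgebras whose cohomology does not vanish, we check that none of the nontrivial deformation directions yields a \emph{nilpotent} superalgebra. Since $\mathcal{N}_{(2|3)}$ is a $G$-stable closed subvariety of $\mathcal{LS}_{(2|3)}$, any $\g$ with $(H^2(\g,\g))_0=0$ has open orbit already in $\mathcal{LS}_{(2|3)}$, hence open orbit in $\mathcal{N}_{(2|3)}$, so it is rigid there with no further work; for the remaining superalgebras a short deformation computation is required.

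Concretely, I would fix the homogeneous basis $\{e_1,e_2,f_1,f_2,f_3\}$ and, for each $\g$, set up the Chevalley--Eilenberg cochain complex: a representative of $(Z^2(\g,\g))_0$ is an even, super skew-symmetric bilinear map $\varphi\colon\g\times\g\to\g$ satisfying the super Jacobi identity linearized at the bracket of $\g$, while $(B^2(\g,\g))_0$ is the image of the coboundary $d\psi(x,y)=\psi(\[x,y\])-\[\psi(x),y\]-(-1)^{|\psi|\,|x|}\[x,\psi(y)\]$ with $\psi\in\End(\g_0|\g_1)_0$. Since $\dim\End(\g_0|\g_1)_0=4+9=13$ and the even $2$-cochain space is of dimension in the thirties, this is a finite linear system; solving it and quotienting by $\operatorname{Im}d$ produces $(H^2(\g,\g))_0$. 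For the superalgebras whose cohomology vanishes, rigidity is then immediate.

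For any superalgebra with $(H^2(\g,\g))_0\neq 0$ --- for instance $(2|3)_6$, which already appears inside one of the parametric ``families'' of~\cite{MF} --- I would write the generic deformed bracket $\[\cdot,\cdot\]_t=\[\cdot,\cdot\]+t\,\varphi$ with $\varphi$ a general cocycle representative, compute the lower central series $(\g_t)^k$ defined in~\eqref{nilpotentdefinition}, and exhibit, for $t\neq 0$, a value of $k$ for which $(\g_t)^k=(\g_t)^{k+1}\neq 0$, so that $\g_t$ is not nilpotent. Hence no curve of nilpotent superalgebras leaves $O(\g)$, and $O(\g)$ is open in $\mathcal{N}_{(2|3)}$. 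As in the proof of Proposition~\ref{prop:rigid52}, rather than testing every cocycle one reduces to the finitely many directions compatible with the grading induced by the lower central series and shows that each of them destroys nilpotency when switched on.

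The main obstacle I anticipate is the sheer size and delicacy of the cohomology computations in dimension $(2|3)$, which is presumably why this case is separated out as Case~III: the systems are large, and the super-sign conventions in the Jacobi identity, combined with the interplay between the representation part $\rho$ and the symmetric part $\Gamma$ of the bracket, make the bookkeeping error-prone; one further has to distinguish, among the nonzero classes of $(H^2(\g,\g))_0$, those that stay inside $\mathcal{N}_{(2|3)}$ from those that leave it. A careful organization of the computation superalgebra by superalgebra, together with the lower-central-series argument above, should nevertheless complete the proof.
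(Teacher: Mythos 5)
Your proposal follows essentially the same route as the paper: compute $(H^2(\g,\g))_0$ for each of the five superalgebras, conclude rigidity immediately when it vanishes, and otherwise check that every nontrivial deformation direction produces a non-nilpotent superalgebra, so the orbit is still open in $\mathcal{N}_{(2|3)}$. The only discrepancy is your guess about which algebra has nonvanishing cohomology: in the paper it is $(2|3)_6$ that satisfies $(H^2(\g,\g))_0=0$ (hence is rigid already in $\mathcal{LS}_{(2|3)}$), while $(2|3)_{18}$, $(2|3)_{19}$, $(2|3)_{23}$ and $(2|3)_{24}$ have nonzero cohomology and require the non-nilpotency verification.
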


\begin{proof}
\begin{align*}
(H^2((2|3)_6,(2|3)_6))_0=&\ 0.\\
(H^2((2|3)_{18},(2|3)_{18}))_0=&\Span\{ e_1^*\wedge e_2^*\otimes e_2-e_1^*\wedge f_3^*\otimes f_3+e_2^*\wedge f_2^*\otimes f_3,\\
&-e_1^*\wedge e_2^*\otimes e_1+e_2^*\wedge f_1^*\otimes f_1+f_1^*\wedge f_2^*\otimes e_1\}.\\
(H^2((2|3)_{19},(2|3)_{19}))_0=&\Span\{ e_1^*\wedge e_2^*\otimes e_2+e_1^*\wedge f_1^*\otimes f_1+f_1^*\wedge f_3^*\otimes e_2,\\
&-e_1^*\wedge e_2^*\otimes e_1+e_1^*\wedge f_3^*\otimes f_2+e_2^*\wedge f_2^*\otimes f_2\}.\\
(H^2((2|3)_{23},(2|3)_{23}))_0=&\Span\{ e_1^*\wedge f_1^*\otimes f_1-e^*_1\wedge f_3^*\otimes f_3,\ e_1^*\wedge f_1^*\otimes f_2+e^*_1\wedge f_2^*\otimes f_3\}.\\
(H^2((2|3)_{24},(2|3)_{24}))_0=&\Span\{ e_1^*\wedge f_1^*\otimes f_3+e_2^*\wedge f_1^*\otimes f_2+e_2^*\wedge f_2^*\otimes f_1,\\
&e_1^*\wedge e_2^*\otimes e_1+2e_1^*\wedge f_1^*\otimes f_2+e_2^*\wedge f_2^*\otimes f_2,\\
&e_1^*\wedge e_2^*\otimes e_1+e_1^*\wedge f_1^*\otimes f_2-e_1^*\wedge f_2^*\otimes f_3,\\
&e_1^*\wedge f_2^*\otimes f_2,\ e^*_1\wedge f_1^*\otimes f_2-e_2\wedge f_3^*\otimes f_3,\\
&e_1^*\wedge e_2^*\otimes e_2-e_1^*\wedge f_3^*\otimes f_3\}.
\end{align*}
It is not difficult to see that any possible deformation of $(2|3)_{18}$, $(2|3)_{19}$, $(2|3)_{23}$ and $(2|3)_{24}$ gives rise to a non-nilpotent Lie superalgebra. Therefore, they are rigid in the variety $\mathcal{N}_{(2|3)}$. Moreover, the Lie superalgebra $(2|3)_6$ is rigid in the variety $\mathcal{LS}_{(2|3)}$.
\end{proof}

\medskip

\begin{theorem}
The irreducible components of the variety $\mathcal{N}_{(2|3)}$ are:
\begin{enumerate}
\item $\mathcal{C}_1=\overline{O((2|3)_6)}$.
\item $\mathcal{C}_2=\overline{O((2|3)_{18})}$.
\item $\mathcal{C}_3=\overline{O((2|3)_{19})}$.
\item $\mathcal{C}_4=\overline{O((2|3)_{23})}$.
\item $\mathcal{C}_5=\overline{O((2|3)_{24})}$.
\end{enumerate}
\end{theorem}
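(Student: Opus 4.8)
The plan is to identify the irreducible components of $\mathcal{N}_{(2|3)}$ from the Hasse diagram of all degenerations among the $25$ isomorphism classes $(2|3)_0,\dots,(2|3)_{24}$, exactly as was done in the lower-dimensional cases. An irreducible component is the closure of the orbit of a superalgebra $\g$ that is not a degeneration of any other nilpotent Lie superalgebra of dimension $(2|3)$; since $\mathcal{N}_{(2|3)}$ is a finite union of orbit closures, its irreducible components are precisely the maximal elements (under the partial order $O(\h)\leq O(\g)\iff\h\in\overline{O(\g)}$) among these orbit closures. So the proof has two halves: show that each of $(2|3)_6$, $(2|3)_{18}$, $(2|3)_{19}$, $(2|3)_{23}$, $(2|3)_{24}$ gives a component, and show that every other $(2|3)_k$ lies in one of these five closures.

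\textbf{Step 1: the five candidates are components.} By Proposition \ref{prop:rig_23} each of the five is rigid in $\mathcal{N}_{(2|3)}$ (their infinitesimal deformations inside the nilpotent variety vanish), hence its orbit is open in $\mathcal{N}_{(2|3)}$, and as noted after Definition \ref{def:rigid} this forces $\overline{O((2|3)_k)}$ to be an irreducible component. It remains to check these five closures are pairwise distinct, i.e.\ none degenerates to another; this follows because each is rigid (an open orbit cannot be contained in the closure of a different orbit), but for safety one records the separating invariants from Lemma \ref{lem:invariants}: the orbit dimensions, the dimensions $\dim[\g,\g]_i$ and $\dim\z(\g)_i$, whether $\Gamma\equiv0$, and $\t(\g)$. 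For instance $(2|3)_6$ has $\Gamma\not\equiv0$, $\rho=0$, while $(2|3)_{24}$ has $\rho\neq0$; $(2|3)_{18}$ and $(2|3)_{19}$ are separated from each other and from $(2|3)_{23}$ by comparing $\dim[\g,\g]_0$, $\dim[\g,\g]_1$ and $\dim\z(\g)_i$.

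\textbf{Step 2: every remaining superalgebra degenerates into one of the five.} Here one produces, for each $(2|3)_k$ with $k\notin\{6,18,19,23,24\}$, an explicit parametrized change of basis exhibiting a degeneration from one of the five rigid superalgebras (or a chain of degenerations through intermediate $(2|3)_j$'s, invoking transitivity of $\to$). Each such degeneration is verified by Lemma \ref{def-equiv-defor}: one writes the operator $g_t\in\GL_2(\C(t))\oplus\GL_3(\C(t))$, computes $g_t\cdot\g$, and checks the limit $t\to0$ is the target. The abelian superalgebra $(2|3)_0$ is a degeneration of everything. The ``$\Gamma$-only'' chain $(2|3)_1,\dots,(2|3)_{11}$ should sit in $\overline{O((2|3)_6)}\cup\overline{O((2|3)_{19})}$ (compare with how $(3|2)$ was handled: $(2|3)_{11}\to(2|3)_{10}\to\cdots$, and the two top elements with $[\cdot,\cdot]=0,\rho=0$ degenerate appropriately); the ``$\rho$-only'' chain $(2|3)_{12},\dots,(2|3)_{24}$ should be covered by the closures of $(2|3)_{18}$, $(2|3)_{19}$, $(2|3)_{23}$, $(2|3)_{24}$. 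Simultaneously one must confirm \emph{no other} $(2|3)_k$ is maximal: this is where the non-degeneration part of Lemma \ref{lem:invariants} is used to rule out the would-be upward arrows that do not exist, so that the Hasse diagram is complete and no sixth component is hiding.

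\textbf{Main obstacle.} The hard part is the bookkeeping of Step 2: with $25$ superalgebras there are many potential degenerations to either construct or exclude, and the excerpt only records the rigidity computations, not the full degeneration table for $(2|3)$. Producing a correct and complete set of parametrized bases (especially among the non-simultaneously-diagonalizable family $(2|3)_7$–$(2|3)_{11}$ and the more intricate $\rho\neq0$ superalgebras $(2|3)_{20}$–$(2|3)_{24}$), together with airtight non-degeneration arguments from the invariants, is the real content; the component statement itself is then a formal consequence of the resulting Hasse diagram having exactly the five sinks $(2|3)_6$, $(2|3)_{18}$, $(2|3)_{19}$, $(2|3)_{23}$, $(2|3)_{24}$.
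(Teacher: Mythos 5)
Your proposal is correct and follows essentially the same route as the paper: Proposition \ref{prop:rig_23} (rigidity via $(H^2(\cdot,\cdot))_0$ together with the check that all nontrivial deformations of $(2|3)_{18}$, $(2|3)_{19}$, $(2|3)_{23}$, $(2|3)_{24}$ are non-nilpotent) supplies the five open orbits, and the explicit degeneration table plus the non-degeneration table built from Lemma \ref{lem:invariants} show these are exactly the maximal orbits in the Hasse diagram. The only imprecision is cosmetic: the whole chain $(2|3)_0,\dots,(2|3)_{11}$ in fact lies in $\overline{O((2|3)_6)}$ alone, not merely in its union with $\overline{O((2|3)_{19})}$.
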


\begin{landscape}

\begin{longtable}{|c|c|}
%&&\kill
\caption[]{Non-degenerations Case III}
\label{table:non-deg53}\\
\hline
$\g\not\to\h$ & Reason\\
\hline
\endfirsthead
\caption[]{(continued)}\\
\hline
$\g\not\to\h$ & Reason\\
\hline
\endhead
$(2|3)_{12}\not\to(2|3)_{1};\ (2|3)_{17}\not\to(2|3)_{1}$ & Lemma \ref{lem:invariants} (\ref{inv:gamma})\\ \hline
$(2|3)_{13}\not\to(2|3)_{7};\ (2|3)_{21}\not\to(2|3)_{2},(2|3)_{15},(2|3)_{1};\ (2|3)_{24}\not\to(2|3)_{3},(2|3)_{7},(2|3)_{14},(2|3)_{2},(2|3)_{15},(2|3)_{1};$ & \multirow{2}{*}{Lemma \ref{lem:invariants} (\ref{inv:der}) $i=0$}\\ 
$(2|3)_{20}\not\to(2|3)_{2},(2|3)_{15},(2|3)_{1};\ (2|3)_{16}\not\to(2|3)_{7};\ (2|3)_{22}\not\to(2|3)_{7};\ (2|3)_{23}\not\to(2|3)_{7}$;  & \\ \hline
$(2|3)_{2}\not\to(2|3)_{12};\ (2|3)_{3}\not\to(2|3)_{15},(2|3)_{17},(2|3)_{12};\ (2|3)_{4}\not\to(2|3)_{14},(2|3)_{20},(2|3)_{21},(2|3)_{15},(2|3)_{17},(2|3)_{12};$ & \multirow{8}{*}{Lemma \ref{lem:invariants} (\ref{inv:der}) $i=1$}\\
$(2|3)_{7}\not\to(2|3)_{15},(2|3)_{17},(2|3)_{12};\ (2|3)_{8}\not\to(2|3)_{14},(2|3)_{20},(2|3)_{21},(2|3)_{15},(2|3)_{17},(2|3)_{12};$ & \\ 
$(2|3)_{13}\not\to(2|3)_{20},(2|3)_{21};\ (2|3)_{16}\not\to(2|3)_{20},(2|3)_{21};\ (2|3)_{18}\not\to(2|3)_{20},(2|3)_{21};\ (2|3)_{19}\not\to(2|3)_{20},(2|3)_{21};$ & \\
$(2|3)_{9}\not\to(2|3)_{24},(2|3)_{13},(2|3)_{16},(2|3)_{18},(2|3)_{19},(2|3)_{22},(2|3)_{14},(2|3)_{20},(2|3)_{21},(2|3)_{15},(2|3)_{17},(2|3)_{12};$ & \\ 
$(2|3)_5\not\to(2|3)_{23},(2|3)_{24},(2|3)_{13},(2|3)_{16},(2|3)_{18},(2|3)_{19},(2|3)_{22},(2|3)_{14},(2|3)_{20},(2|3)_{21},(2|3)_{15},(2|3)_{17},(2|3)_{12}$ & \\ 
{$(2|3)_{11}\not\to(2|3)_{23},(2|3)_{13},(2|3)_{16},(2|3)_{18},(2|3)_{19},(2|3)_{22},(2|3)_{24},(2|3)_{14},(2|3)_{20},(2|3)_{21},(2|3)_{15},(2|3)_{17},(2|3)_{12}$} & \\ 
$(2|3)_{10}\not\to(2|3)_{23},(2|3)_{24},(2|3)_{13},(2|3)_{16},(2|3)_{18},(2|3)_{19},(2|3)_{22},(2|3)_{14},(2|3)_{20},(2|3)_{21},(2|3)_{15},(2|3)_{17},(2|3)_{12}$ & \\ 
$(2|3)_{6}\not\to(2|3)_{23},(2|3)_{24},(2|3)_{13},(2|3)_{16},(2|3)_{18},(2|3)_{19},(2|3)_{22},(2|3)_{14},(2|3)_{20},(2|3)_{21},(2|3)_{15},(2|3)_{17},(2|3)_{12}$ & \\ \hline
$(2|3)_{14}\not\to(2|3)_{17};\ (2|3)_{21}\not\to(2|3)_{17};\ (2|3)_{23}\not\to(2|3)_{20}$ & Lemma \ref{lem:invariants} (\ref{inv:centro}) $i=0$\\ \hline
$(2|3)_{20}\not\to(2|3)_{17};\ (2|3)_{13}\not\to(2|3)_{3};\ (2|3)_{16}\not\to(2|3)_{3};\ (2|3)_{22}\not\to(2|3)_{3}$ & Lemma \ref{lem:invariants} (\ref{inv:centro}) $i=1$\\ \hline
$(2|3)_{13}\not\to(2|3)_{14};\ (2|3)_{13}\not\to(2|3)_{2};\ (2|3)_{18}\not\to(2|3)_{3};\ (2|3)_{19}\not\to(2|3)_{2};\ (2|3)_{22}\not\to(2|3)_{14};\ (2|3)_{22}\not\to(2|3)_{2}$ & \multirow{2}{*}{Lemma \ref{lem:invariants} (\ref{inv:ab})}\\ 
$(2|3)_{23}\not\to(2|3)_{4};\ (2|3)_{23}\not\to(2|3)_{8}$ & \\ \hline
$(2|3)_{13}\not\to(2|3)_{17}; (2|3)_{16}\not\to(2|3)_{17};\ (2|3)_{22}\not\to(2|3)_{20};\ (2|3)_{22}\not\to(2|3)_{21};\ (2|3)_{22}\not\to(2|3)_{17};\ (2|3)_{23}\not\to(2|3)_{24}$ & \multirow{2}{*}{Lemma \ref{lem:invariants} (\ref{inv:f})}\\ 
$(2|3)_{23}\not\to(2|3)_{18},(2|3)_{19},(2|3)_{17}$ & \\ \hline
$(2|3)_{7}\not\to(2|3)_{2};\ (2|3)_{4}\not\to(2|3)_{3};\ (2|3)_{8}\not\to(2|3)_{3};\ (2|3)_{19}\not\to(2|3)_{3};\ (2|3)_{9}\not\to(2|3)_{3};\ (2|3)_{10}\not\to(2|3)_5,(2|3)_3;$ & Lemma \ref{lem:invariants} (\ref{inv:trivial})\\ \hline
\multirow{2}{*}{{$(2|3)_{10}\not\to(2|3)_{11}$}} & Lemma \ref{lem:invariants} (\ref{inv:dergen}) $i=0$\\
& $(\alpha,\beta,\gamma)=(0,1,0)$\\ \hline
\multirow{2}{*}{{$(2|3)_{5}\not\to(2|3)_{8};\ (2|3)_{9}\not\to(2|3)_{8}$}} & Lemma \ref{lem:invariants} (\ref{inv:dergen}) $i=0$\\
& $(\alpha,\beta,\gamma)=(0,1,-1)$\\ \hline
\end{longtable}

\scriptsize
\begin{longtable}{|c|lll|}
%&&\kill
\caption[]{Degenerations Case III}
\label{table:deg}\\
\hline
$\g\to\h$ & \multicolumn{3}{|c|}{Parametrized Basis}\\
\hline
\endfirsthead
\caption[]{(continued)}\\
\hline
$\g\to\h$ & \multicolumn{3}{|c|}{Parametrized Basis}\\
\hline
\endhead
\multirow{2}{*}{$(2|3)_2\to(2|3)_1$} & $x_1=e_1$, & $x_2=e_2$, & $y_1=f_1$, \\
& $y_2=tf_2$, & $y_3=f_3$. & \\ \hline

\multirow{2}{*}{$(2|3)_{15}\to(2|3)_{12}$} & $x_1=e_1$, & $x_2=e_2$, & $y_1=tf_1$, \\
& $y_2=f_2$, & $y_3=tf_3$. & \\ \hline
\multirow{2}{*}{$(2|3)_{15}\to(2|3)_1$} & $x_1=e_2$, & $x_2=te_1$, & $y_1=f_3$, \\
& $y_2=f_2$, & $y_3=f_1$. & \\ \hline

\multirow{2}{*}{$(2|3)_{17}\to(2|3)_{12}$} & $x_1=e_1$, & $x_2=e_2$, & $y_1=f_1$, \\
& $y_2=tf_2$, & $y_3=f_3$. & \\ \hline

\multirow{2}{*}{$(2|3)_3\to(2|3)_2$} & $x_1=e_1$, & $x_2=e_2$, & $y_1=f_1$, \\
& $y_2=f_2$, & $y_3=tf_3$. & \\ \hline

\multirow{2}{*}{$(2|3)_7\to(2|3)_1$} & $x_1=2e_2$, & $x_2=e_1$, & $y_1=f_2$, \\
& $y_2=tf_1$, & $y_3=f_3$. & \\ \hline

\multirow{2}{*}{$(2|3)_{14}\to(2|3)_2$} & $x_1=e_2$, & $x_2=te_1$, & $y_1=i\left(-\frac{1}{2}f_2+f_3\right)$, \\
& $y_2=\frac{1}{2}f_2+f_3$, & $y_3=f_1$.  &\\ \hline
\multirow{2}{*}{$(2|3)_{14}\to(2|3)_{15}$} & $x_1=e_1$, & $x_2=e_2$, & $y_1=f_1$, \\
 & $y_2=tf_2$, & $y_3=\frac{1}{2}f_2+f_3$. & \\ \hline

\multirow{2}{*}{$(2|3)_{20}\to(2|3)_{12}$} & $x_1=e_1$, & $x_2=te_2$, & $y_1=f_1$, \\
& $y_2=f_2$, & $y_3=f_3$. & \\ \hline

\multirow{2}{*}{$(2|3)_{21}\to(2|3)_{12}$} & $x_1=e_1$, & $x_2=e_2$, & $y_1=f_2$,\\
 & $y_2=t^{-1}f_1$, & $y_3=f_3$. & \\ \hline

\multirow{2}{*}{$(2|3)_4\to(2|3)_7$} & $x_1=\frac{t}{2}\left(e_1+e_2\right)$, & $x_2=\frac{1}{8}\left(-e_1+e_2\right)$, & $y_1=t\left(if_1+f_2\right)$, \\
& $y_2=\frac{1}{2}\left(-if_1+f_2\right)$, & $y_3=f_3$. & \\ \hline
\multirow{2}{*}{$(2|3)_4\to(2|3)_2$} & $x_1=e_1$, & $x_2=t^{-1}\left(e_2-e_1\right)$, & $y_1=f_1$, \\
& $y_2=f_2$, & $y_3=f_3$. & \\ \hline

\multirow{2}{*}{$(2|3)_{8}\to(2|3)_7$} & $x_1=e_1$, & $x_2=e_2$, & $y_1=f_1$, \\
& $y_2=f_2$, & $y_3=tf_3$. & \\ \hline
\multirow{2}{*}{$(2|3)_{8}\to(2|3)_2$} & $x_1=2e_1$, & $x_2=t^{-1}e_2$, & $y_1=i\left(-f_1+f_2\right)$, \\
& $y_2=f_1+f_2$, & $y_3=f_3$. & \\ \hline

\multirow{2}{*}{$(2|3)_{13}\to(2|3)_{15}$} & $x_1=e_1$, & $x_2=e_2$, & $y_1=f_1$,\\
 & $y_2=tf_2$, & $y_3=f_2+f_3$. & \\ \hline
%$(2|3)_{14}\to(2|3)_{13}$ & $x_1=e_1$, & $x_2=e_2$, & $y_1=f_1$, & $y_2=tf_2$, & $y_3=f_3$.\\ \hline
%$(2|3)_{14}\to(2|3)_1$ & $x_1=e_2$, & $x_2=e_1$, & $y_1=f_2$, & $y_2=f_1$, & $y_3=tf_3$.\\ \hline

\newpage

\multirow{2}{*}{$(2|3)_{16}\to(2|3)_{14}$} & $x_1=e_1$, & $x_2=e_2$, & $y_1=\frac{t^{-1}}{2}f_1$, \\
& $y_2=t\left(if_2+f_3\right)$, & $y_3=\frac{t^{-1}}{2}\left(-if_2+f_3\right)$. & \\ \hline

\multirow{2}{*}{$(2|3)_{18}\to(2|3)_7$} & $x_1=-e_2$, & $x_2=e_1$, & $y_1=f_3$, \\
& $y_2=f_2$, & $y_3=t^{-1}f_1$. & \\ \hline
\multirow{2}{*}{$(2|3)_{18}\to(2|3)_{14}$} & $x_1=e_1$, & $x_2=-te_2$, & $y_1=f_1$, \\
& $y_2=tf_2$, & $y_3=f_3$. & \\ \hline
\multirow{2}{*}{$(2|3)_{18}\to(2|3)_{17}$} & $x_1=e_1$, & $x_2=e_2$, & $y_1=tf_1$, \\
& $y_2=tf_2$, & $y_3=tf_3$. & \\ \hline

\multirow{2}{*}{$(2|3)_{19}\to(2|3)_7$} & $x_1=-e_1$, & $x_2=e_2$, & $y_1=f_2$, \\
& $y_2=f_3$, & $y_3=t^{-1}f_1$. & \\ \hline
\multirow{2}{*}{$(2|3)_{19}\to(2|3)_{14}$} & $x_1=e_2$, & $x_2=-te_1$, & $y_1=f_1$, \\
& $y_2=tf_3$, & $y_3=f_2$. & \\ \hline
%$(2|3)_{20}\to(2|3)_{16}$ & $x_1=e_1$ & $x_2=2e_2$ & $y_1=f_1$ & $y_2=tf_2$ & $y_3=f_3$\\ \hline
\multirow{2}{*}{$(2|3)_{19}\to(2|3)_{17}$} & $x_1=e_1$, & $x_2=e_2$, & $y_1=tf_1$,\\
 & $y_2=tf_2$, & $y_3=tf_3$.& \\ \hline

\multirow{2}{*}{$(2|3)_{22}\to(2|3)_{21}$} & $x_1=e_1$, & $x_2=t^{-1}e_2$, & $y_1=f_1$, \\
& $y_2=f_2$, & $y_3=f_3$. & \\ \hline
\multirow{2}{*}{$(2|3)_{22}\to(2|3)_{15}$} & $x_1=e_1$, & $x_2=e_2$, & $y_1=f_2$, \\
& $y_2=t^{-1}f_1$, & $y_3=f_3$. & \\ \hline

\multirow{2}{*}{$(2|3)_{24}\to(2|3)_{20}$} & $x_1=te_1$, & $x_2=e_2$, & $y_1=tf_2$, \\
& $y_2=f_1$, & $y_3=f_3$. & \\ \hline
\multirow{2}{*}{$(2|3)_{24}\to(2|3)_{21}$} & $x_1=e_1$, & $x_2=te_2$, & $y_1=f_1$, \\
& $y_2=f_2$, & $y_3=f_3$. & \\ \hline
\multirow{2}{*}{$(2|3)_{24}\to(2|3)_{17}$} & $x_1=e_1$, & $x_2=t^{-1}e_2$, & $y_1=f_1$, \\
& $y_2=tf_3$, & $y_3=f_2$. & \\ \hline

%{\color{blue}$(2|3)_{8}\to(2|3)_{7}$} & $x_1=ie_2$ & $x_2=\frac{1}{2}e_1$ & $y_1=f_3$ & $y_2=f_2$ & $y_3=tf_1$\\ \hline
%{\color{blue}$(2|3)_{8}\to(2|3)_{2}$} & $x_1=e_1$ & $x_2=e_2$ & $y_1=f_1$ & $y_2=f_2$ & $y_3=tf_3$\\ \hline

\multirow{2}{*}{$(2|3)_{9}\to(2|3)_4$} & $x_1=e_1$, & $x_2=2e_2$, & $y_1=f_3$, \\
& $y_2=f_2$, & $y_3=tf_1$. & \\ \hline

\multirow{2}{*}{$(2|3)_{23}\to(2|3)_{13}$} & $x_1=e_1$, & $x_2=-2t^{3/2}e_2$, & $y_1=tf_1$, \\
& $y_2=t^{1/4}f_1+t^{5/4}f_3$, & $y_3=tf_2$. & \\ \hline
\multirow{2}{*}{$(2|3)_{23}\to(2|3)_{16}$} & $x_1=e_1$, & $x_2=t^2e_2$, & $y_1=tf_1$, \\
& $y_2=-\frac{1}{2}t^{1/2}f_1+t^{3/2}f_3$, & $y_3=tf_2$. & \\ \hline

\newpage

\multirow{2}{*}{$(2|3)_{23}\to(2|3)_{22}$} & $x_1=e_1$, & $x_2=e_2$, & $y_1=tf_1$,\\
 & $y_2=tf_2$, & $y_3=-\frac{t^{-1}}{2}f_1+tf_3$.& \\ \hline
\multirow{2}{*}{$(2|3)_{23}\to(2|3)_3$} & $x_1=-e_2$, & $x_2=te_1$, & $y_1=\frac{i}{\sqrt{2}}\left(-f_1+f_3\right)$, \\
& $y_2=if_2$, & $y_3=\frac{1}{\sqrt{2}}\left(f_1+f_3\right)$. & \\ \hline
%$(2|3)_{24}\to(2|3)_{15}$ & $x_1=te_1$ & $x_2=-e_2$ & $y_1=tf_2$ & $y_2=f_1$ & $y_3=f_3$\\ \hline

\multirow{2}{*}{$(2|3)_5\to(2|3)_4$} & $x_1=e_1$, & $x_2=e_2$, & $y_1=f_1$, \\
& $y_2=f_2$, & $y_3=tf_3$. & \\ \hline
\multirow{2}{*}{$(2|3)_5\to(2|3)_3$} & $x_1=e_1$, & $x_2=t^{-1}(e_2-e_1)$, & $y_1=f_1$, \\
& $y_2=f_2$, & $y_3=f_3$. & \\ \hline
\multirow{2}{*}{{$(2|3)_{5}\to(2|3)_{9}$}} & $x_1=e_1$, & $x_2=\frac{t^{-1}}{2}e_1+\frac{1}{2}e_2$, & $y_1=t^{1/2}f_1$, \\
& $y_2=t^{-1/2}f_1+f_2$, & $y_3=f_3$. & \\ \hline

\multirow{2}{*}{{$(2|3)_{11}\to(2|3)_{9}$}} & $x_1=e_1$, & $x_2=t^{-2}e_2$, & $y_1=tf_1$, \\
& $y_2=t^{-1}f_2$, & $y_3=f_3$. & \\ \hline
\multirow{2}{*}{{$(2|3)_{11}\to(2|3)_{8}$}} & $x_1=t^{-1}e_1$, & $x_2=e_2$, & $y_1=t^{-1}f_1$, \\
& $y_2=f_2$, & $y_3=f_3$. &\\ \hline
%{\color{blue}$(2|3)_{11}\to(2|3)_{8}$} & $x_1=2e_2$ & $x_2=t^{-1}e_1$ & $y_1=f_2$ & $y_2=if_2-2if_3$ & $y_3=t^{-1}f_1$\\ \hline

\multirow{2}{*}{$(2|3)_{10}\to(2|3)_{9}$} & $x_1=t^2e_1$, & $x_2=e_2$, & $y_1=t^2f_1$, \\
& $y_2=f_2$, & $y_3=tf_3$. & \\ \hline
\multirow{2}{*}{{$(2|3)_{10}\to(2|3)_{8}$} }& $x_1=-\frac{i\sqrt{2}}{4t(t-1)}e_1$, & $x_2=\frac{1}{2t(t-1)}(e_1+e_2)$, & $y_1=-f_1$, \\
& $y_2=\left(\frac{i\sqrt{2}}{4t(t-1)}\right)(f_1+f_2)-\left(\frac{2t-1}{2t(t-1)}\right)f_3$, & $y_3=\left(\frac{i\sqrt{2}}{4(t-1)}\right)(f_1+f_2)-\left(\frac{1}{2(t-1)}\right)f_3$. & \\ \hline
%%%%

{$(2|3)_6\to(2|3)_{11}$} & $x_1=t(e_1+e_2)$, & $x_2=\left(\frac{\alpha^2+1}{2}\right)\left(e_1+\frac{1}{\alpha^2}e_2\right)$, & $y_1=t f_3$, \\
$\alpha^2+1=2\alpha\sqrt{t}$ & $y_2=\alpha f_1-\frac{1}{\alpha}f_2+f_3$, & $y_3=\sqrt{t}(f_1-f_2)$. & \\  \hline
\multirow{2}{*}{{$(2|3)_6\to(2|3)_{10}$}} & $x_1=\sqrt{\frac{t}{2}}\left(e_1-\left(\frac{1+\sqrt{2t}}{1-\sqrt{2t}}\right)e_2\right)$, & $x_2=\frac{1}{2}\left(e_1+\left(\frac{1+\sqrt{2t}}{1-\sqrt{2t}}\right)e_2\right)$, & $y_1=\sqrt{\frac{t}{2}}\left(f_1-\left(\sqrt{\frac{1+\sqrt{2t}}{1-\sqrt{2t}}}\right)f_2\right)$, \\
& $y_2=f_1+\left(\sqrt{\frac{1+\sqrt{2t}}{1-\sqrt{2t}}}\right)f_2$, & $y_3=\left(\sqrt{\frac{1+\sqrt{2t}}{2}}\right)f_3$. & \\ \hline

\multirow{2}{*}{$(2|3)_6\to(2|3)_5$} & $x_1=t^2e_1$, & $x_2=e_2$, & $y_1=tf_1$, \\
& $y_2=f_2$, & $y_3=tf_3$. &\\ \hline

\end{longtable}

\normalsize

\begin{center}
\begin{tikzpicture}[->,>=stealth',shorten >=0.08cm,auto,node distance=1.20cm,
                    thick,main node/.style={rectangle,draw,fill=gray!12,rounded corners=1.5ex,font=\sffamily \bf \bfseries },
                    purple node/.style={rectangle,draw, color=purple,fill=gray!12,rounded corners=1.5ex,font=\sffamily \bf \bfseries },
                    orange node/.style={rectangle,draw, color=orange,fill=gray!12,rounded corners=1.5ex,font=\sffamily \bf \bfseries },
                    green node/.style={rectangle,draw, color=green,fill=gray!12,rounded corners=1.5ex,font=\sffamily \bf \bfseries },
                    blue node/.style={rectangle,draw, color=blue,fill=gray!12,rounded corners=1.5ex,font=\sffamily \bf \bfseries },
                    connecting node/.style={circle, draw, color=purple },
                    rigid node/.style={rectangle,draw,fill=black!20,rounded corners=1.5ex,font=\sffamily \tiny \bfseries },style={draw,font=\sffamily \scriptsize \bfseries }]
                    
\node (131)   {$\dim O(\g)$};

\node (121) [below          of=131]      {$12$};
\node (111) [below          of=121]      {$11$};
\node (101) [below          of=111]      {$10$};
\node (91) [below          of=101]      {$9$};
\node (81) [below          of=91]      {$8$};
\node (71) [below          of=81]      {$7$};
\node (61) [below          of=71]      {$6$};
\node (51) [below          of=61]      {$5$};
\node (41) [below          of=51]      {$4$};
\node (31) [below          of=41]      {$3$};
\node (21) [below          of=31]      {$2$};
\node (11) [below          of=21]      {$1$};
\node (01) [below          of=11]      {$0$};

\node (122) [right          of=121]      {};
\node (123) [right          of=122]      {};
\node (124) [right          of=123]      {};
\node (125) [right          of=124]      {};
\node (126) [right          of=125]      {};
\node (127) [right          of=126]      {};
\node (128) [right          of=127]      {};

\node (112) [right          of=111]      {};
\node (113) [right          of=112]      {};
\node (114) [right          of=113]      {};
\node (115) [right          of=114]      {};
\node (116) [right          of=115]      {};
\node (117) [right          of=116]      {};
\node (118) [right          of=117]      {};
\node (119) [right          of=118]      {};
\node (1110) [right          of=119]      {};
\node (1111) [right          of=1110]      {};
\node (1112) [right          of=1111]      {};
\node (1113) [right          of=1112]      {};

\node (102) [right          of=101]      {};
\node (103) [right          of=102]      {};
\node (104) [right          of=103]      {};
\node (105) [right          of=104]      {};
\node (106) [right          of=105]      {};
\node (107) [right          of=106]      {};
\node (108) [right          of=107]      {};
\node (109) [right          of=108]      {};

\node (92) [right          of=91]      {};
\node (93) [right          of=92]      {};
\node (94) [right          of=93]      {};
\node (95) [right          of=94]      {};
\node (96) [right          of=95]      {};
\node (97) [right          of=96]      {};
\node (98) [right          of=97]      {};
\node (99) [right          of=98]      {};
\node (910) [right          of=99]      {};
\node (911) [right          of=910]      {};

\node (82) [right          of=81]      {};
\node (83) [right          of=82]      {};
\node (84) [right          of=83]      {};
\node (85) [right          of=84]      {};
\node (86) [right          of=85]      {};
\node (87) [right          of=86]      {};
\node (88) [right          of=87]      {};
\node (89) [right          of=88]      {};
\node (810) [right          of=89]      {};
\node (811) [right          of=810]      {};
\node (812) [right          of=811]      {};
\node (813) [right          of=812]      {};
\node (814) [right          of=813]      {};
\node (815) [right          of=814]      {};
\node (816) [right          of=815]      {};
\node (817) [right          of=816]      {};

\node (72) [right          of=71]      {};
\node (73) [right          of=72]      {};
\node (74) [right          of=73]      {};
\node (75) [right          of=74]      {};
\node (76) [right          of=75]      {};
\node (77) [right          of=76]      {};
\node (78) [right          of=77]      {};
\node (79) [right          of=78]      {};
\node (710) [right          of=79]      {};
\node (711) [right          of=710]      {};
\node (712) [right          of=711]      {};

\node (62) [right          of=61]      {};
\node (63) [right          of=62]      {};
\node (64) [right          of=63]      {};
\node (65) [right          of=64]      {};
\node (66) [right          of=65]      {};
\node (67) [right          of=66]      {};
\node (68) [right          of=67]      {};
\node (69) [right          of=68]      {};
\node (610) [right          of=69]      {};

\node (52) [right          of=51]      {};
\node (53) [right          of=52]      {};
\node (54) [right          of=53]      {};
\node (55) [right          of=54]      {};
\node (56) [right          of=55]      {};
\node (57) [right          of=56]      {};
\node (58) [right          of=57]      {};

\node (42) [right          of=41]      {};
\node (43) [right          of=42]      {};
\node (44) [right          of=43]      {};
\node (45) [right          of=44]      {};
\node (46) [right          of=45]      {};
\node (47) [right          of=46]      {};
\node (48) [right          of=47]      {};

\node (02) [right          of=01]      {};
\node (03) [right          of=02]      {};
\node (04) [right          of=03]      {};
\node (05) [right          of=04]      {};
\node (06) [right          of=05]      {};
\node (07) [right          of=06]      {};
\node (08) [right          of=07]      {};

\node [main node] (5a)  [right of =123]                      {$(2|3)_6$ };

\node [main node] (13a)  [right of =113]                      {$(2|3)_{10}$ };

\node [main node] (50)  [right of =102]                      {$(2|3)_5$ };
\node [main node] (13x)  [right of =104]                      {$(2|3)_{11}$ };

\node [main node] (130)  [right of =95]                      {$(2|3)_{9}$ };
\node [main node] (25)  [right of =911]                      {$(2|3)_{23}$ };

\node [main node] (4)  [right of =81]                      {$(2|3)_4$ };
\node [main node] (11)  [right of =83]                      {$(2|3)_8$ };
\node [main node] (15)  [right of =817]                      {$(2|3)_{13}$ };
\node [main node] (18)  [right of =813]                      {$(2|3)_{16}$ };
\node [main node] (20)  [right of =87]                      {$(2|3)_{18}$ };
\node [main node] (21)  [right of =89]                      {$(2|3)_{19}$ };
\node [main node] (24)  [right of =811]                      {$(2|3)_{22}$ };
\node [main node] (27)  [right of =815]                      {$(2|3)_{24}$ };
%\node [blue node] (8x)  [right of =85]                      {$(2|3)_{8}$ };

\node [main node] (3)  [right of =74]                      {$(2|3)_3$ };
\node [main node] (6)  [right of =76]                      {$(2|3)_7$ };
\node [main node] (16)  [right of =78]                      {$(2|3)_{14}$ };
\node [main node] (22)  [right of =710]                      {$(2|3)_{20}$ };
\node [main node] (23)  [right of =712]                      {$(2|3)_{21}$ };

\node [main node] (2)  [right of =66]                      {$(2|3)_{2}$ };
\node [main node] (17)  [right of =68]                      {$(2|3)_{15}$ };
\node [main node] (19)  [right of =610]                      {$(2|3)_{17}$ };

\node [main node] (14)  [right of =58]                      {$(2|3)_{12}$ };

\node [main node] (1)  [right of =48]                      {$(2|3)_{1}$ };

\node [main node] (0)  [right of =08]                      {$(2|3)_{0}$ };

\path[every node/.style={font=\sffamily\small}]

(5a)  edge [bend right=0, color=black] node{}  (13a)
(5a)  edge [bend right=-20, color=black] node{}  (13x)
(5a)  edge [bend right=20, color=black] node{}  (50)

(13a)  edge [bend right=-40, color=black] node{}  (130)
(13a)  edge [bend right=10, color=black] node{}  (11)

(50)  edge [bend right=10, color=black] node{}  (130)
(50)  edge [bend right=10, color=black] node{}  (4)
(50)  edge [bend right=60, color=black] node{}  (3)

(13x)  edge [bend right=10, color=black] node{}  (11)
(13x)  edge [bend right=0, color=black] node{}  (130)
%(13x)  edge [bend right=30, color=black] node{}  (8x)

(130)  edge [bend right=10, color=black] node{}  (4)
(25)  edge [bend right=20, color=black] node{}  (3)
(25)  edge [bend right=-5, color=black] node{}  (15)
(25)  edge [bend right=3, color=black] node{}  (18)
%(25)  edge [bend right=-10, color=black] node{}  (16)
(25)  edge [bend right=0, color=black] node{}  (24)

(4)  edge [bend right=0, color=black] node{}  (6)
(4)  edge [bend right=10, color=black] node{}  (2)

(11)  edge [bend right=50, color=black] node{}  (2)
(11)  edge [bend right=0, color=black] node{}  (6)

(15)  edge [bend right=-18, color=black] node{}  (14)
(15)  edge [bend right=-25, color=black] node{}  (17)

(18)  edge [bend right=0, color=black] node{}  (16)

(20)  edge [bend right=0, color=black] node{}  (6)
(20)  edge [bend right=0, color=black] node{}  (16)
(20)  edge [bend right=-20, color=black] node{}  (19)

(21)  edge [bend right=0, color=black] node{}  (6)
(21)  edge [bend right=0, color=black] node{}  (16)
(21)  edge [bend right=-10, color=black] node{}  (17)
(21)  edge [bend right=5, color=black] node{}  (19)

(24)  edge [bend right=-10, color=black] node{}  (23)
(24)  edge [bend right=10, color=black] node{}  (17)

(27)  edge [bend right=0, color=black] node{}  (22)
(27)  edge [bend right=0, color=black] node{}  (23)
(27)  edge [bend right=-20, color=black] node{}  (19)

%(8x)  edge [bend right=0, color=blue] node{}  (6)
%(8x)  edge [bend right=30, color=blue] node{}  (2)

(3)  edge [bend right=0, color=black] node{}  (2)

(6)  edge [bend right=0, color=black] node{}  (1)

(16)  edge [bend right=20, color=black] node{}  (2)
(16)  edge [bend right=0, color=black] node{}  (17)

(22)  edge [bend right=0, color=black] node{}  (14)

(23)  edge [bend right=-20, color=black] node{}  (14)

(2)  edge [bend right=0, color=black] node{}  (1)

(17)  edge [bend right=0, color=black] node{}  (14)
(17)  edge [bend right=60, color=black] node{}  (1)

(19)  edge [bend right=0, color=black] node{}  (14)

(14)  edge [bend right=-40, color=black] node{}  (0)

(1)  edge [bend right=0, color=black] node{}  (0);

\end{tikzpicture}
\end{center}
\end{landscape}

\normalsize

\section{Rigid Nilpotent Lie Superalgebras}

As we mentioned previously, the study of rigid elements within varieties is very interesting since their orbit closures provide irreducible components. In \cite{GKN}, the authors proved that the Lie superalgebra $K^{2,m}\in \mathcal{N}_{(2|m)}$, for odd $m$, defined by
$$\[e_1,f_i\]=f_{i+1},\qquad\[f_j,f_{m+1-j}\]=(-1)^{j+1}e_2$$
for $1\leq i\leq m-1$ and $1\leq j\leq \frac{m+1}{2}$ has an open orbit in $\mathcal{N}_{(2|m)}$ and is therefore  rigid in $\mathcal{N}_{(2|m)}$. In the case of Lie algebras, the Vergne Conjecture states that there are no rigid nilpotent Lie algebras of dimension $n$ in the variety of Lie algebras of dimension $n$. This conjecture have been proved for Lie algebras of $\rk\geq 1$ (see \cite{HT}) and for very few cases in $\rk=0$ (see for instance \cite{TV}). In the setting of Lie superalgebras this conjecture is false: In \cite{AH}, we prove that there exists a rigid nilpotent Lie superalgebra of dimension $(2|2)$ by showing that $(H^2((2|2)_1)_0=0$. By Propositions \ref{prop:rigid2}, \ref{prop:rigid3}, \ref{prop:rigid4}, \ref{prop:rigid51}, \ref{prop:rigid52} and \ref{prop:rig_23}, we can see that in every dimension $\leq 5$ there are rigid nilpotent Lie superalgebras. This make us wonder whether this is the general case for every dimension. 

\medskip

In particular, we can prove:

\begin{lemma}
There exist rigid nilpotent Lie superalgebras of dimension $(1|n)$ for all $n\in \N$.
\end{lemma}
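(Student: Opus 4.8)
The plan is to exhibit, for each $n\ge 2$, an explicit rigid element of $\mathcal N_{(1|n)}$, namely the $(1|n)$-dimensional Lie superalgebra $\g_n=\g_0\oplus\g_1$ with $\g_0=\Span\{e_1\}$, $\g_1=\Span\{f_1,\dots,f_n\}$ and non-zero brackets $\[e_1,f_{i+1}\]=f_i$ for $1\le i\le n-1$; here $\g_0$ is abelian and acts on $\g_1$ as a single regular nilpotent Jordan block, so $\g_n\in\mathcal N_{(1|n)}$, and for small $n$ this is precisely $(1|2)_3$, $(1|3)_1$, $(1|4)_7$. The case $n=1$ needs no new work: $\mathcal N_{(1|1)}$ consists of the pairs $(0,B)$ with $B\in\Sym_1(\C)\cong\C$, and $O((1|1)_1)=\{(0,B):B\ne0\}$ is open in it. So assume $n\ge 2$.

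Rather than computing $(H^2(\g_n,\g_n))_0$ uniformly in $n$, I would argue geometrically. Following Section~3, a point of $\mathcal N_{(1|n)}$ is a pair $(A,B)$ with $A=\rho(e_1)\in\gl_n(\C)$ nilpotent and $B\in\Sym_n(\C)$ representing $\Gamma$ (so $\[f_i,f_j\]=B_{ij}e_1$), subject to (J1) $A^tB+BA=0$ and (J2) $B(u,v)Aw+B(v,w)Au+B(u,w)Av=0$. Under the action \eqref{action} the group $G=\GL_1(\C)\oplus\GL_n(\C)$ acts by $(\lambda,S)\cdot(A,B)=(\lambda^{-1}SAS^{-1},\,\lambda S^{-t}BS^{-1})$, so $\g_n=(J,0)$ with $J$ regular nilpotent and
$$O(\g_n)=\{(\lambda^{-1}SJS^{-1},0):\lambda\in\C^\ast,\ S\in\GL_n(\C)\}=\{(A,0):A\text{ regular nilpotent}\}.$$
The crux of the proof is the claim that \emph{if $(A,B)\in\mathcal N_{(1|n)}$ and $A$ is regular nilpotent, then $B=0$}.

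Granting the claim, rigidity follows at once. The regular nilpotent matrices form the Zariski-open set $\{A:A^{n-1}\ne 0\}$, so $U:=\{(A,B)\in\mathcal N_{(1|n)}:A^{n-1}\ne 0\}$ is an open neighbourhood of $\g_n$ in $\mathcal N_{(1|n)}$; since every $A$-part of a point of $\mathcal N_{(1|n)}$ is nilpotent, each $(A,B)\in U$ has $A$ regular nilpotent, hence $B=0$ by the claim, i.e.\ $U\subseteq O(\g_n)$. As $G$ acts on $\mathcal N_{(1|n)}$ by isomorphisms and transitively on $O(\g_n)$, we get $O(\g_n)=\bigcup_{g\in G}g\cdot U$, a union of open sets, so $O(\g_n)$ is open and $\g_n$ is rigid in the sense of Definition~\ref{def:rigid}.

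It remains to prove the claim, which is the only computational step. Acting by a suitable $(1,S)$ we may assume $A=J$, with $Jf_{i+1}=f_i$, $Jf_1=0$. Setting $b_{ij}=B(f_i,f_j)$ and $f_0:=0$, the identity (J2) evaluated at the triple $(f_i,f_j,f_k)$ becomes $b_{ij}f_{k-1}+b_{jk}f_{i-1}+b_{ik}f_{j-1}=0$ in $\g_1$. For $n\ge 4$ and any $i\le j$ one may choose $k\in\{2,\dots,n\}\setminus\{i,j\}$ (possible since $|\{2,\dots,n\}|\ge 3$) and read off the coefficient of the basis vector $f_{k-1}$ to obtain $b_{ij}=0$; hence $B=0$. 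The two small cases $n=2$ and $n=3$ are settled by a direct inspection of the finitely many remaining triples. I expect this bookkeeping to be the only genuine work; the conceptual point—and the one step deserving care—is the passage from the fibrewise vanishing ``$A$ regular nilpotent $\Rightarrow B=0$'' to openness of the orbit, carried out above.
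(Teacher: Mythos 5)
Your argument is internally correct up to its last sentence: the claim that a point $(A,B)\in\mathcal{N}_{(1|n)}$ with $A$ regular nilpotent must have $B=0$ does follow from (J2) as you indicate, and it does yield that $O(\g_n)=\{(A,0):A\ \text{regular nilpotent}\}$ is open in $\mathcal{N}_{(1|n)}$. But the final step --- ``so $O(\g_n)$ is open and $\g_n$ is rigid in the sense of Definition \ref{def:rigid}'' --- is false. Definition \ref{def:rigid} requires the orbit to be open in $\mathcal{LS}_{(1|n)}$, not in the subvariety $\mathcal{N}_{(1|n)}$, and your orbit is visibly not open there: when $B=0$ the constraints (J1) and (J2) are vacuous, so $\{(A,0):A\in\gl_n(\C)\}\subset\mathcal{LS}_{(1|n)}$, and the regular nilpotent locus is not open in $\gl_n(\C)$ (it lies in the nilpotent cone, of positive codimension). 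Concretely, $\[e_1,f_1\]=tf_1$ added to $\g_n$ gives a curve in $\mathcal{LS}_{(1|n)}$ leaving the orbit. The paper itself records this phenomenon for your small cases: $(1|2)_3$, $(1|3)_1$, $(1|4)_7$ all have $(H^2(\cdot,\cdot))_0\neq 0$ and are rigid only in $\mathcal{N}$, while a \emph{different} superalgebra is rigid in $\mathcal{LS}$ in each of those dimensions.

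This matters because the lemma is the super-analogue of the failure of the Vergne conjecture, which concerns rigidity in the \emph{full} variety; accordingly the paper's proof uses a different family --- the Heisenberg superalgebra with even center, $\[f_i,f_i\]=e_1$ for $i=1,\dots,n$ (trivial action, nondegenerate $\Gamma$, i.e.\ $(1|2)_2$, $(1|3)_5$, $(1|4)_4$ for small $n$) --- and shows $(H^2(\g,\g))_0=0$ by a short cochain computation, which gives openness of the orbit in $\mathcal{LS}_{(1|n)}$. Your geometric openness argument is a nice alternative to cohomology, but as it stands it proves only the weaker statement ``rigid in $\mathcal{N}_{(1|n)}$,'' and it cannot be repaired for your family $\g_n$, since those superalgebras genuinely fail to be rigid in $\mathcal{LS}_{(1|n)}$. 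To prove the lemma as stated you would need to either switch to the paper's family (or another one with no nontrivial even deformations) or explicitly weaken the conclusion to rigidity in the nilpotent variety.
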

\begin{proof}
Let $\mathcal{B}=\{e_1,f_1,\dots,f_n\}$ be a homogeneous basis for $V=V_0\oplus V_1$ of dimension $(1|n)$. We define the following bracket:
$$\[f_{i},f_{i}\]=e_1,\ \text{for }i=1,\dots,n.$$
Then clearly $\g=(V,\[\cdot,\cdot\])$ is a 2-step nilpotent Lie superalgebra. In fact, this is a Heisenberg Lie superalgebra with even center.

\medskip

We will compute the group $(H^2(\g,\g))_0$:

Consider possible even 2-cochains of $\g$:
\begin{enumerate}[(a)]
%\item $\varphi=e_1^*\wedge f_j^*\otimes e_1$. Then $d^2\varphi=\displaystyle\sum_{i=1}^{n}f_{i}^*\wedge f_{i}^*\wedge f_j^*\otimes e_1\neq 0$.
\item $\varphi=e_1^*\wedge f_j^*\otimes f_l$. Then $$d^2(\varphi)=\displaystyle\sum_{i=1}^{n}f_{i}^*\wedge f_{i}^*\wedge f_j^*\otimes f_l+e_1^*\wedge f_j^*\wedge f_l^*\otimes e_1\neq 0.$$ Since elements of the sum cannot be eliminated, $\varphi$ cannot be part of a 2-cocycle.
\item $\varphi=f_i^*\wedge f_j^*\otimes e_1$. Then $d^2(\varphi)=0$.
%\item $\varphi=f_i^*\wedge f_j^*\otimes f_l$. Then $d^2\varphi=f_i^*\wedge f_j^*\wedge f_l^*\otimes e_1\neq 0$
\end{enumerate}

We thus obtain that $(Z^2(\g,\g))_0=\Span\{ f_i^*\wedge f_j^*\otimes e_1 \}$, but $d^1(f_i^*\otimes f_j)=f_i^*\wedge f_j^*\otimes e_1$, which implies that $(H^2(\g,\g))_0=0$. Therefore $\g$ is a nilpotent rigid Lie superalgebra.
\end{proof}

\bigskip

{\bf Acknowledgements: }
M.A.Alvarez was supported by MINEDUC-UA project, code ANT 1755 and ``Fondo Puente de Investigaci\'on de Excelencia'' FPI-18-02 from University of Antofagasta.
I. Hern\'andez was supported by grants FOMIX-CONACYT  YUC-2013-C14-221183 and 222870.

\end{document}